\numberwithin{equation}{section}
\def\e{\varepsilon}
\def\epsilon{\varepsilon}
\def\eps{\varepsilon}
\newcommand{\ol}{\overline}
\newcommand{\wt}{\widetilde}
\newcommand{\wh}{\widehat}
\def\alb#1\ale{\begin{align*}#1\end{align*}}
\newcommand{\eqb}{\begin{equation}}
\newcommand{\eqe}{\end{equation}}
\DeclareMathOperator{\var}{var}
\newcommand{\bbE}{\mathbb{E}}
\newcommand{\bbH}{\mathbb{H}}
\newcommand{\bbR}{\mathbb{R}}
\newcommand{\bbC}{\mathbb{C}}
\newcommand{\bbP}{\mathbb{P}}
\newcommand{\cC}{\mathcal{C}}
\newcommand{\cD}{\mathcal{D}}
 \newcommand{\bfa}{\mathbf{a}}
 \newcommand{\rmL}{\mathrm{L}}
 \newcommand{\rmR}{\mathrm{R}}
\newcommand{\QT}{\mathrm{QT}}
\newcommand{\LF}{\mathrm{LF}}
\newcommand{\SLE}{\mathrm{SLE}}
\newcommand{\Wd}{\mathrm{Weld}}
\newcommand{\Md}{{\mathcal{M}}^\mathrm{disk}}
\newcommand{\mo}[1]{{\color{purple}{#1}}}
\newcommand{\pu}[1]{{\color{teal}{#1}}}
\newtheorem{theorem}{Theorem}[section]
\newtheorem{lemma}[theorem]{Lemma}
\newtheorem{proposition}[theorem]{Proposition}
\newtheorem*{proposition*}{Proposition}
\newtheorem*{corollary*}{Corollary}
\newtheorem{definition}[theorem]{Definition}
\newtheorem*{definitions*}{Definitions}
\newtheorem*{example*}{\bf Example}
\theoremstyle{remark}
\numberwithin{equation}{section}
\title{The $p$-$\theta$ relation in mating of trees}
\author{Morris Ang\thanks{UC San Diego}  \qquad Xin Sun\thanks{Beijing International Center for Mathematical Research, Peking University.}\qquad Pu Yu\thanks{New York University} }
\date{}
\begin{document}

\maketitle

\begin{abstract}
In the mating-of-trees approach to Schramm-Loewner evolution (SLE) and Liouville quantum gravity (LQG), it is natural to consider two pairs of correlated Brownian motions coupled together. This arises in the scaling limit of bipolar-orientation-decorated planar maps (Gwynne-Holden-Sun, 2016) and in the related skew Brownian permuton studied by Borga et al. 
There are two parameters that can be used to index the coupling between the two pairs of Brownian motions, denoted as $p$ and $\theta$ in the literature: $p$ describes the Brownian motions, whereas 
$\theta$ describes the SLE curves on LQG surfaces.  In this paper, we derive an  exact relation between the two parameters and demonstrate its application to {computing statistics of the skew Brownian permuton.}
% demonstrate its application to the skew Brownian permuton \mo{[not sure if ``apply it to compute the expected inversion rate of the skew Brownian permuton'' is better, or the current broad framing (since we do also comment on the intensity measure)? If we want to stay broad maybe ``demonstrate its application to computing statistics of the skew Brownian permuton'']}. 
Our derivation relies on the synergy between   
mating-of-trees and  
Liouville conformal field theory (LCFT), where the boundary  {three-point function}  in LCFT provide the exact solvable inputs.
\end{abstract}

\section{Introduction}

Liouville quantum gravity (LQG) coupled with Schramm-Loewner evolution (SLE) is a central topic in random conformal geometry. The mating-of-trees approach~\cite{DMS14,GHS19}  provides a powerful framework to study this coupling and its relation to decorated random planar maps. One important class of models are the so-called bipolar-orientation decorated maps introduced in~\cite{KMSW19} and further studied in~\cite{GHS16}. A closely related model called Schynder-wood-decorated triangulations was then studied in~\cite{LSW17}. Compared with previously studied planar map models, a novel feature of these models is the presence of multiple pairs of random trees instead of a single pair.  They  gained much interest in recent years  due to %the
their connection to the scaling limit of  a natural class of random permutations, namely the Baxter permutation and its relatives~\cite{BM20,borga2021skewperm,BHSY23}. % [Jacopo, 1234]. 

In this paper, we solve a basic question in this line of research, which is the exact relation between the $p$ and $\theta$ parameters as introduced in~\cite{GHS16}.  Here $\theta$ is the angle of an imaginary geometry flow line relative to a space-filling SLE curve, and $p$ is the proportion of time that the space-filling SLE curve is to the left of the flow line. Each parameter can be used to index the coupling between two pairs of trees and they have a 1-to-1 correspondence. We show 
(in Theorem~\ref{thm:main}) that they are related by:
\begin{equation}\label{eq:simple-rel}
   \frac{p}{1-p} =  \frac{\sin((\frac{1}{2}-\frac{\gamma^2}{8})(\pi-2\theta))}{\sin((\frac{1}{2}-\frac{\gamma^2}{8})(\pi+2\theta))} 
\end{equation}where $\gamma\in (0,2)$ is the LQG parameter. See Section~\ref{subsec:main} for the precise definitions of $p$ and $\theta$, and see Figure~\ref{fig:MOT} for the setup. Previously the relation~\eqref{eq:simple-rel} was only known when $\theta=0$ and $p=\frac12$ by symmetry, and when $\theta=\frac{\pi}{6}$ and $p=\frac{1}{1+\sqrt{2}}$ and $\gamma=1$ by the relation to Schnyder-wood-decorated triangulation~\cite{LSW17}. There is another natural quantity relating the local time of a Brownian motion to the LQG length measure, which we determine in Theorem~\ref{thm:main-1}. 

Besides its intrinsic interest, the exact relation~\eqref{eq:simple-rel} between $p$ and $\theta$ has important implications in the study of the aforementioned discrete models. The key to the mating-of-trees framework is to encode the LQG/SLE coupling via Brownian motions. This reduces scaling limit questions to the convergence of random walk to Brownian motion. The parameter $p$ describes by 
the Brownian motions,  hence can be used to parametrize the limiting object. For example, the family of skew Brownian permutons~\cite{borga2021skewperm}, as a generalization of the Baxter permuton (i.e.\ the scaling limit of uniformly random Baxter permutations), are parametrized by $p$. On the other hand, the $\theta$ parameter is more natural in the original LQG/SLE setting. Hence knowing the relation~\eqref{eq:simple-rel} allows one to analyze the continuum object via LQG/SLE tools beyond mating-of-trees. In particular, the relation~\eqref{eq:simple-rel} yields the explicit involution rate and density function for the skew Brownian permuton (Theorem~\ref{thm-involution}). Previously, this was only known for the Baxter permuton~\cite{BHSY23} and %other 
the few
cases for which~\eqref{eq:simple-rel} is known.  See Section~\ref{subsec:Baxter} for details.

Our derivation of~\eqref{eq:simple-rel} goes
beyond the mating-of-trees framework. It relies on the exact input coming from the integrability of Liouville conformal field theory (LCFT), which describes the variants of Gaussian free field naturally appeared in the mating-of-trees framework.
 In particular, we use the boundary three-point structure constant for LCFT studied in~\cite{arsz-structure-constants}. 
 We give an overview of our proof in Section~\ref{subsec:strategy}.
The earlier derivation of another basic mating-of-trees quantity, the variance of the Brownian motion~\cite{AGS21}, also relied on the synergy  with LCFT. 
 See, e.g., \cite{AHS21,nolin2023backbone,ASYZ24,LSYZ24} for other exact results in random conformal geometry obtained by combining the mating-of-trees framework and LCFT.

The rest of the paper is organized as follows. In Section~\ref{subsec:main} we state our main result. In Section~\ref{subsec:Baxter}, we explain applications of our results to the skew Brownian permuton. The proof strategy is explained in Section~\ref{subsec:strategy}. In Section~\ref{sec:pre} we provide preliminaries on SLE and LQG. In Section~\ref{sec:conf}, we prove a conformal welding result, based on which we derive the exact formula~\eqref{eq:simple-rel} in Section~\ref{sec:p-lcft}.

\subsection{Main Result}\label{subsec:main}

We briefly recall the mating-of-trees framework in~\cite{DMS14}. Let $\gamma\in(0,2)$ and $\kappa=\gamma^2$. Let $(\bbC,h,0,\infty)$ be a $\gamma$-quantum cone, $\wh h$ a whole plane Gaussian free field (GFF) independent from $h$, and $\eta'$ the north-going space-filling counterflowline of $\wh h$ from $\infty$ to $\infty$ (so $\eta'$ is a whole-plane $\SLE_{16/\kappa}$ curve). See Sections~\ref{sec:pre-gff} and~\ref{sec:pre-ig} for the precise definitions of these objects. We parameterize $\eta'$ such that $\eta'(0)=0$ and for $-\infty<s<t<\infty$, $t-s$ is the $\gamma$-LQG area of $\eta'([s,t])$ induced by $h$. Then one can define a process $(L_t,R_t)_{t\in\bbR}$ tracing the change of the LQG-lengths of the left and right boundaries of $\eta'((-\infty,t])$. The process $(L_t,R_t)_{t\in\bbR}$ evolves as a correlated 2D Brownian motion, and determines the pair $(h,\eta')$ up to an affine transform.

\begin{figure}
    \centering
    \includegraphics[scale=0.68]{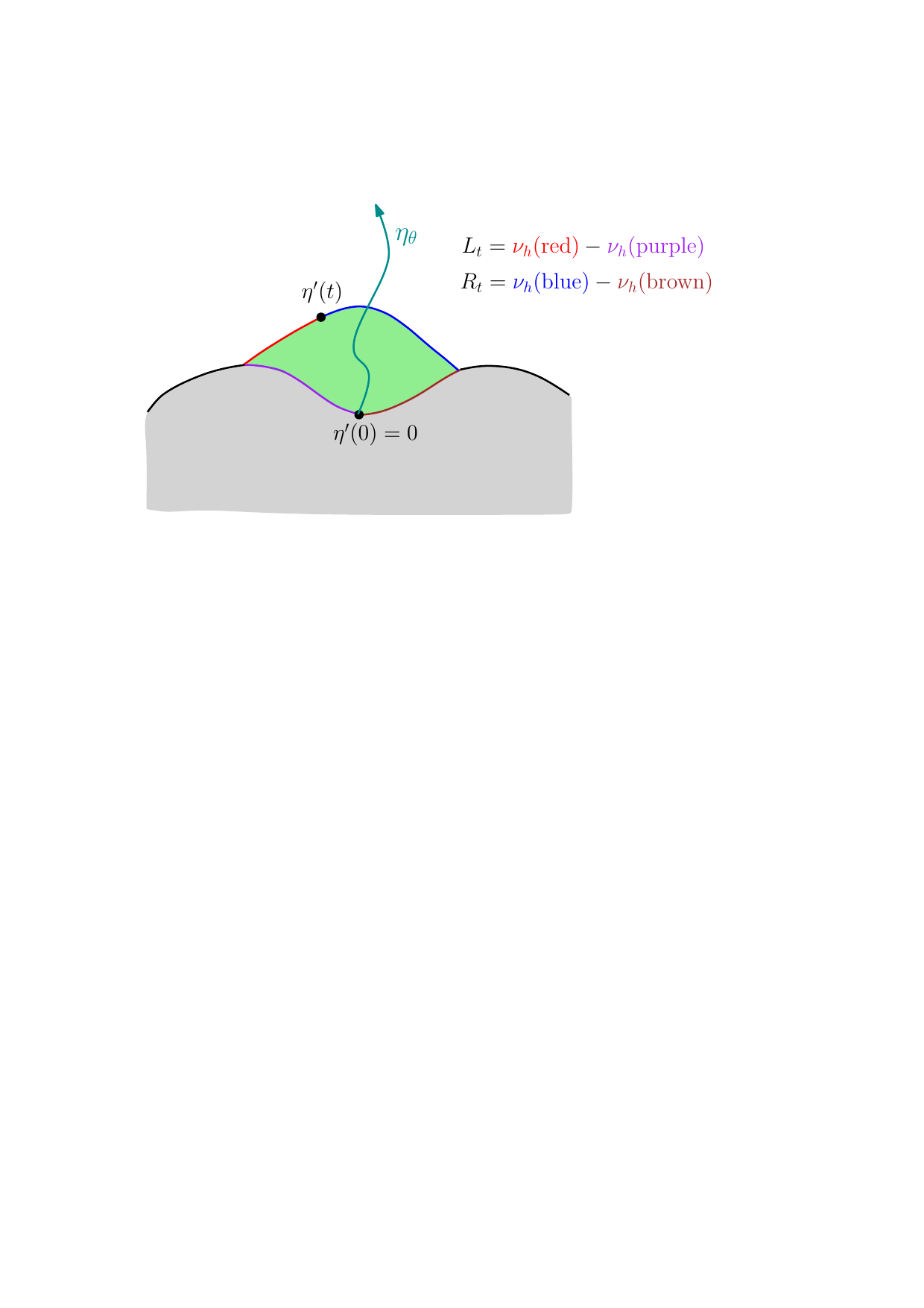}
    \caption{An illustration of the mating-of-trees setup. Here $p_\gamma(\theta)$ is the same as the probability where $\eta'(t)$ is on the left of the flow line $\eta_\theta$, and the local time of $|X|$ at 0 is $2c_\gamma(\theta)$ times the quantum length of $\eta'([0,t])\cap\eta_\theta$.}
    \label{fig:MOT}
\end{figure}

Now let $\theta\in[-\frac{\pi}{2},\frac{\pi}{2}]$ and let $\eta_\theta$ be the $\theta$-angle flow line of $\wh h$ started from 0 as defined through imaginary geometry~\cite{MS17}. See Figure~\ref{fig:MOT} for the setup. Using the scale invariance of the quantum cone~\cite[Proposition 4.13]{DMS14}, the probability that $\eta'(t)$ lies on the left hand side of $\eta_\theta$ does not depend on $t$. We denote this probability by $p_\gamma(\theta)$. 

\begin{theorem}\label{thm:main}
    Let $\gamma\in(0,2)$ and   $\theta\in[-\frac{\pi}{2},\frac{\pi}{2}]$, and let $p_\gamma(\theta)$ be the constant defined above. Then
     \begin{equation}\label{eq:p-theta}
        \frac{p_\gamma(\theta)}{1-p_\gamma(\theta)} =  \frac{\sin((\frac{1}{2}-\frac{\gamma^2}{8})(\pi-2\theta))}{\sin((\frac{1}{2}-\frac{\gamma^2}{8})(\pi+2\theta))}.   
    \end{equation}
\end{theorem}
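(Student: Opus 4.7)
The plan is to compute $p_\gamma(\theta)$ via two independent routes and then match them. On the mating-of-trees side, the pair $(L_t, R_t)$ encodes $(h, \eta')$ as a correlated Brownian motion, and the flow line $\eta_\theta$ is traced out by a monotone path in the $(L, R)$-plane whose direction is a function of $(\gamma, \theta)$; the quantity $p_\gamma(\theta)$ is essentially the proportion of time the Brownian motion lies on one side of this direction. On the LCFT side, we exploit the fact that $\eta_\theta$ conformally welds the quantum cone out of a pair of independent quantum wedges whose weights are explicit in $(\gamma, \theta)$, and then compute a suitable welding observable using the exactly solvable boundary three-point function of LCFT from~\cite{arsz-structure-constants}.

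The first step, carried out in Section~\ref{sec:conf}, is the conformal welding statement: the $\gamma$-quantum cone decorated by the $\theta$-angle flow line $\eta_\theta$ is the welding along matching LQG boundary length of two independent quantum wedges whose weights depend explicitly on $\theta$ and $\gamma$, extending the standard welding decompositions of~\cite{DMS14, MS17}. Granted this decomposition, the second step (Section~\ref{sec:p-lcft}) is to localize the problem to a finite-volume observable by using the scale invariance of the quantum cone; a natural choice is a quantum disk or triangle with three marked boundary points decorated by the same SLE configuration. This observable can be evaluated in two ways: via mating-of-trees on each wedge, which expresses it in terms of $p_\gamma(\theta)$, and via the boundary three-point structure constant of~\cite{arsz-structure-constants}, which gives a closed-form expression involving double-sine functions evaluated at parameters linear in $\theta$. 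Equating the two expressions and simplifying yields the trigonometric identity~\eqref{eq:p-theta}.

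The main obstacle is at the interface between the two sides: one must pick a finite-volume observable so that both (a) its mating-of-trees evaluation directly exposes $p_\gamma(\theta)$, and (b) its LCFT evaluation falls within the range of the ARSZ three-point formula. This demands careful bookkeeping of LQG boundary lengths, matching imaginary-geometry angle conventions with quantum wedge weights, and choosing LCFT insertion weights so that the resulting structure constant depends on $\theta$ precisely through the combination $(\tfrac12 - \tfrac{\gamma^2}{8})\theta$ that appears in~\eqref{eq:p-theta}. Once the observable and the parameter matching are in place, the final trigonometric simplification via standard double-sine identities should be routine.
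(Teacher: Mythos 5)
Your overall strategy --- cutting the cone along $\eta_\theta$ into explicit quantum wedges and evaluating a finite-volume observable both through mating of trees and through the boundary three-point function of~\cite{arsz-structure-constants} --- is indeed the route the paper takes, but as written the plan has genuine gaps. First, your description of the mating-of-trees side is off: $\eta_\theta$ does not correspond to a fixed direction in the $(L,R)$-plane, and $p_\gamma(\theta)$ is not a ``proportion of time on one side of a line'' that can be read off from the Brownian motion; it enters only as the skewness parameter of the process $X$ built from the contour decomposition of \cite[Proposition 3.2]{GHS16}, and determining it is precisely the unknown, so any claim to compute it directly on the Brownian side is circular. More importantly, every observable accessible through that decomposition is tied to quantum length along $\eta_\theta$, whose relation to the local time of $|X|$ involves a \emph{second} unknown constant $c_\gamma(\theta)$ (\cite[Proposition 3.3]{GHS16}). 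Hence the mating-of-trees evaluation does not ``directly expose $p_\gamma(\theta)$'': in the paper it yields only $\bbE[e^{-\mu A_1^{\rmL}}]=\exp(-\mathbf{a}^{-1}c_\gamma(\theta)p_\gamma(\theta)\sqrt{2\mu})$ and $\bbE[e^{-\mu A_1^{\rmR}}]=\exp(-\mathbf{a}^{-1}c_\gamma(\theta)(1-p_\gamma(\theta))\sqrt{2\mu})$, i.e.\ the products $p_\gamma(\theta)c_\gamma(\theta)$ and $(1-p_\gamma(\theta))c_\gamma(\theta)$, and one must compute \emph{both} the left and right observables and take their ratio to eliminate $c_\gamma(\theta)$ (which is also how Theorem~\ref{thm:main-1} falls out). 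A single observable, as in your plan, leaves one equation in two unknowns.

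Second, the ``interface'' step you flag as the main obstacle is not mere bookkeeping: it requires a new conformal welding theorem not contained in the standard statements of \cite{DMS14}. After splitting the cone along $\eta_\theta$ into wedges of weights $W^{\rmL},W^{\rmR}$, the finite piece filled by $\eta'$ up to the stopping time $\sigma_1$ is bounded inside each wedge by the left/right boundary of $\eta'((-\infty,\sigma_1])$, which by imaginary geometry is an $\SLE_\kappa(W-2;-W,W-2)$ started from a quantum-typical boundary point; one must prove (Theorem~\ref{thm:weld-0}) that this curve cuts the weight-$W$ wedge into an independent weight-$W$ wedge and a quantum triangle $\QT(2-W,W,\gamma^2)$, so that $A_1^{\rmL}$ and $A_1^{\rmR}$ are exactly areas of quantum triangles with one boundary length fixed. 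Only then does the LCFT side apply, and even there the needed quantity is a special value $H^{(Q+\frac{3\gamma}{2}-\beta,\frac{2}{\gamma},\beta)}_{(0,0,0)}$ with positive bulk and zero boundary cosmological constants, which the paper extracts from the shift equations and limiting arguments (Proposition~\ref{prop-H-special}) rather than by plugging into the Ponsot--Teschner formula. Without the welding identification and the ratio trick, the argument does not close.
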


As shown in~\cite[Proposition 3.2]{GHS16}, one can define a continuous process $(X_t)_{t\geq0}$, which evolves as $(L_t)_{t\geq0}$ whenever $\eta'$ is on the right hand side of $\eta_\theta$, and evolves as $(R_t)_{t\geq0}$ whenever $\eta'$ is on the left hand side of $\eta_\theta$. This process $(X_t)_{t\geq0}$ has the law of skew Brownian motion with parameter $p=p_\gamma(\theta)$. %In particular, given any deterministic $t>0$, $p$ equals the probability where $\eta'(t)$ is on the left hand side of $\eta_\theta$.
Moreover, as shown in~\cite[Proposition 3.3]{GHS16}, there exists a constant $c = c_\gamma(\theta)$ such that the local time of $|X|$ at 0 is equal to $2c_\gamma(\theta)$ times the quantum length of $\eta'([0,t])\cap\eta_\theta$.  See Section~\ref{sec:p-contour} for a concrete description of the setup.  
\begin{theorem}\label{thm:main-1}
     Let $\gamma\in(0,2)$ and  $\theta\in[-\frac{\pi}{2},\frac{\pi}{2}]$, and let $c_\gamma(\theta)$ be the constant introduced above. Then 
     \begin{equation}\label{eq:c-theta}
         c_\gamma(\theta) = \frac{\cos((1-\frac{\gamma^2}{4})\theta)}{\sin(\frac{\pi\gamma^2}{8})}.
     \end{equation}
\end{theorem}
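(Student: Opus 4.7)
My plan is to mirror the strategy used for Theorem~\ref{thm:main}, building on the conformal welding identity established in Section~\ref{sec:conf}. In that identity, cutting the $\gamma$-quantum cone along the flow line $\eta_\theta$ decomposes it into two quantum wedges whose weights depend on $\gamma$ and $\theta$, and these wedges are welded together along the common boundary $\eta_\theta$ via LQG quantum length. The defining statement of $c_\gamma(\theta)$, together with the skew Brownian motion description of $(X_t)$, says that this shared LQG length along $\eta_\theta \cap \eta'([0,t])$ equals $2c_\gamma(\theta)$ times the local time of $|X|$ at $0$. Thus $c_\gamma(\theta)$ is precisely the conversion factor between the Brownian and LQG notions of length along $\eta_\theta$, and it should be read off from the welding identity after one additional LCFT input independent of the one used for $p_\gamma(\theta)$.

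Concretely, the plan is to pick an observable that isolates $c_\gamma(\theta)$ rather than $p_\gamma(\theta)$. A natural choice is a moment of the quantum length of $\eta_\theta$ in a piece of the configuration with two marked points on $\eta_\theta$, obtained by pinching the quantum cone at a typical return point of $\eta'$ to $\eta_\theta$. On the mating-of-trees side, one expresses this observable using the joint distribution of the skew BM $X$, its local time at $0$, and the correlated pair $(L,R)$ around a typical zero of $X$; the factor $2c_\gamma(\theta)$ enters through the conversion between local time and quantum length, and the skew parameter $p_\gamma(\theta)$ enters through the asymmetry of the excursion measure. The Brownian side is explicit up to the unknown $c_\gamma(\theta)$ since $p_\gamma(\theta)$ is already pinned down by Theorem~\ref{thm:main}.

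On the LCFT side, the corresponding marked quantum surface is a pair of welded quantum wedges whose partition function reduces, after a standard change of coordinates to a disk or half-plane, to the boundary three-point structure constant of~\cite{arsz-structure-constants}, evaluated at insertion weights read off from the angle $\theta$ via imaginary geometry. Matching the two expressions and simplifying the resulting ratio of double sine / gamma factors using standard reflection identities should collapse to the compact form $\cos((1-\gamma^2/4)\theta)/\sin(\pi\gamma^2/8)$. As a sanity check, at $\theta=0$ the numerator is $1$ and one recovers $c_\gamma(0) = 1/\sin(\pi\gamma^2/8)$, the normalization one expects for a straight $\eta_0$ orthogonal to $\eta'$; and at $\theta=\pm\pi/2$ the numerator equals $\sin(\pi\gamma^2/8)$ so that $c_\gamma(\pm\pi/2)=1$, consistent with $\eta_{\pm\pi/2}$ coinciding with one side of $\eta'$, where the local time of $X$ agrees with the LQG length directly.

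The main obstacle will be step two: choosing an observable whose LCFT evaluation collapses cleanly to the boundary three-point function and whose mating-of-trees evaluation is tractable, and then correctly tracking normalization constants between the skew BM excursion measure and the quantum disk / wedge measures. This is the analogue of the delicate bookkeeping in~\cite{AGS21} that fixes the Brownian variance. Once the observable is set up correctly, the algebraic simplification of the ARSZ expression into the stated trigonometric form should be routine, guided by the answer and by the simplifications already performed in Section~\ref{sec:p-lcft} for Theorem~\ref{thm:main}.
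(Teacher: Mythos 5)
There is a genuine gap: your plan names the right toolbox (cut along $\eta_\theta$, welding, skew BM local time, the boundary three-point function of \cite{arsz-structure-constants}) but leaves the pivotal step unspecified, and you acknowledge this yourself ("the main obstacle will be step two"). The missing idea is the concrete observable: in the paper one fixes $\sigma_1$, the first time the quantum length of $\eta_\theta\cap\eta'([0,\cdot])$ reaches $1$, and computes the Laplace transforms of the one-sided quantum areas $A_1^{\rmL},A_1^{\rmR}$ of $\eta'([0,\sigma_1])$. On the Brownian side, excursion thinning and the stable-$\frac12$ law of the inverse local time give $\mathbb{E}[e^{-\mu A_1^{\rmL}}]=\exp(-\bfa^{-1}c_\gamma(\theta)p_\gamma(\theta)\sqrt{2\mu})$ (Proposition~\ref{prop:p-quantum-area}), so the products $c_\gamma(\theta)p_\gamma(\theta)$ and $c_\gamma(\theta)(1-p_\gamma(\theta))$ appear automatically; on the LQG side, imaginary geometry plus the new welding statement (Theorem~\ref{thm:weld-0}) identifies $A_1^{\rmL}$ with the area of a sample from $\QT(W^{\rmL},2-W^{\rmL},\gamma^2;1)^{\#}$, whose Laplace transform is evaluated via $H_{(0,0,0)}$ and the shift-equation computation (Proposition~\ref{prop-H-special}). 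Your framing is also off in asserting that one needs an LCFT input "independent of the one used for $p_\gamma(\theta)$": no new observable is needed at all, because the same computation that proves Theorem~\ref{thm:main} yields the two identities \eqref{eq:p-c-theta-a}--\eqref{eq:p-c-theta-b} for $p_\gamma(\theta)c_\gamma(\theta)$ and $(1-p_\gamma(\theta))c_\gamma(\theta)$, and Theorem~\ref{thm:main-1} follows by simply adding them and using $\sin A+\sin B=2\sin\frac{A+B}{2}\cos\frac{A-B}{2}$ together with $\sin\frac{\pi\gamma^2}{4}=2\sin\frac{\pi\gamma^2}{8}\cos\frac{\pi\gamma^2}{8}$. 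Without either this summation shortcut or a worked-out replacement observable, your proposal does not yet constitute a proof.

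Two smaller points. First, your $\theta=\pm\frac{\pi}{2}$ sanity check is shaky: with the paper's normalization (local time of $|X|$ equals $2c_\gamma(\theta)$ times the quantum length), "local time agrees with LQG length directly" would suggest $c=\frac12$, not $1$, and in any case the degenerate angles are not handled by such a heuristic. Second, the actual argument needs care at the boundary weights: the welding input requires $W^{\rmL},W^{\rmR}\neq\frac{\gamma^2}{2}$, and the excluded cases are filled in by monotonicity of $p_\gamma(\theta)$ and, for $(\theta,\gamma)=(0,\sqrt{4/3})$, by the known value $c_\gamma(0)=2$ from \cite{GHS16}; your sketch does not address these exceptional parameters.
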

It was shown in~\cite[Corollary 5.6]{GHS16}
that $c_\gamma (0)=2$ for $\gamma=\sqrt{4/3}$. In Section~\ref{sec:p-lcft}, we derive~\eqref{eq:c-theta} for $(\theta,\gamma)\neq (0,\sqrt{4/3})$.

\subsection{Applications to skew Brownian permuton and related models}\label{subsec:Baxter}

A permuton is a Borel probability measure $\mu$ on $[0,1]^2$ whose marginals are both uniform, i.e., for any $0\leq a<b\leq 1$ we have $\mu([a,b]\times[0,1])=\mu([0,1]\times[a,b])=b-a$. Any permutation $\sigma$ of $\{1,2,...,n\}$ can be viewed as a permuton by setting $\mu$ as $n$ times the Lebesgue measure on $\{[\frac{i-1}{n},\frac{i}{n}]\times [\frac{\sigma(i)-1}{n},\frac{\sigma(i)}{n}]:i=1,...,n\}$. 	

Permuton limits have been investigated for various models of random permutations. For many models, the permuton limits are deterministic, for instance,  Erd\"{o}s-Szekeres permutations \cite{MR2266895}, Mallows permutations \cite{starr2009thermodynamic}, random sorting networks \cite{dauvergne2018archimedean}, almost square permutations \cite{MR4149526,borga2021almost}, and permutations sorted with the \emph{runsort} algorithm \cite{alon2021runsort}. 	For random  permutations which have a scaling limit, the limiting permutons appear to be random in many cases. In~\cite{borga2021skewperm}, Borga introduced a two-parameter family of permutons, called the \emph{skew Brownian permuton}, which covers many of the known examples.

The skew Brownian permuton $\mu_{\rho,q}$ is indexed by parameters $\rho\in(-1,1]$ and $q\in [0,1]$, and $\mu_{-1/2,1/2}$ coincides with the Baxter permuton~\cite{BM20}. We now  recall the construction of the skew Brownian permuton for $\rho\in(-1,1)$ and $q\in [0,1]$. For  $\rho\in(-1,1)$, let $( W_{\rho}(t))_{t\in \mathbb{R}_{\geq 0}}=( X_{\rho}(t), Y_{\rho}(t))_{t\in \mathbb{R}_{\geq 0}}$ be  
	a \emph{two-dimensional Brownian motion of correlation} $\rho$. This is a continuous two-dimensional Gaussian process such that the components $ X_{\rho}$ and $ Y_{\rho} $ are standard one-dimensional Brownian motions, and $\mathrm{Cov}( X_{\rho}(t), Y_{\rho}(s)) = \rho \cdot \min\{t,s\}$. Let  $( E_{\rho}(t))_{t\in [0,1]}$ be a \emph{two-dimensional Brownian loop of correlation} $\rho$. Namely, it is a two-dimensional Brownian motion of correlation $\rho$ conditioned to stay in the non-negative quadrant $\mathbb{R}_{+}^2$ and to end at the origin at time 1, i.e.\ $ E_{\rho}(1)=(0,0)$. 
	For $q\in [0,1]$,  consider the solutions of the following family of stochastic differential equations (SDEs) indexed by $u\in [0,1]$ and driven by $ E_{\rho} = ( X_{\rho}, Y_{\rho})$:
	
	\begin{equation}\label{eq:flow_SDE_gen}
		\begin{cases}
			d Z_{\rho,q}^{(u)}(t) = \mathds{1}_{\{ Z_{\rho,q}^{(u)}(t)\geq 0\}} d Y_{\rho}(t) - \mathds{1}_{\{ Z_{\rho,q}^{(u)}(t)< 0\}} d  X_{\rho}(t)+(2q-1)\cdot d L^{ Z_{\rho,q}^{(u)}}(t),& t\in(u,1),\\
			Z_{\rho,q}^{(u)}(t)=0,&  t\in[0,u],
		\end{cases} 
	\end{equation}
	where $ L^{ Z_{\rho,q}^{(u)}}(t)$ is the symmetric local-time process at zero of $ Z_{\rho,q}^{(u)}$, i.e., 
	\begin{equation*}
		L^{ Z^{(u)}_{\rho,q}}(t)=\lim_{\varepsilon\to 0}\frac{1}{2\varepsilon}\int_0^t\mathds{1}_{\left\{ Z^{(u)}_{\rho,q}(s)\in[-\varepsilon,\varepsilon]\right\}}ds.
	\end{equation*}
 % Here  $\left\{Z^{(u)}_{\rho,q}\right\}_{u\in[0,1]}$ \mo{is a solution to~\eqref{eq:flow_SDE_gen}} %is defined 
   % in the following sense: for a.e.\ $\omega$ \mo{in the probability space, for a.e.\ $u \in [0,1]$ the process} $ Z^{(u)}_{\rho,q}(\omega)$ is a solution \mo{to~\eqref{eq:flow_SDE_gen}.} %for almost every $u\in[0,1]$.    For further details see the discussion below \cite[Theorem 1.7]{borga2021skewperm}. 
   {More precisely, we say $\left\{Z^{(u)}_{\rho,q}\right\}_{u\in[0,1]}$ is a solution to~\eqref{eq:flow_SDE_gen} if, for a.e.\ $\omega$ in the probability space, for a.e.\ $u \in [0,1]$ the process $ Z^{(u)}_{\rho,q}(\omega)$ is a solution to~\eqref{eq:flow_SDE_gen}.}
   The solutions to  the SDEs~\eqref{eq:flow_SDE_gen}  exist and are unique thanks to \cite[Theorem 1.7]{borga2021skewperm}; for further details see the discussion below \cite[Theorem 1.7]{borga2021skewperm}.
    The collection of stochastic processes $\left\{ Z^{(u)}_{\rho,q}\right\}_{u\in[0,1]}$ is called the \emph{continuous coalescent-walk process} driven by $( E_{\rho},q)$. Let
	
	\begin{equation*}
		\varphi_{ Z_{\rho,q}}(t)=
		\mathrm{Leb}\left( \big\{x\in[0,t)\,|\, Z_{\rho,q}^{(x)}(t)<0\big\} \cup \big\{x\in[t,1]\,|\, Z_{\rho,q}^{(t)}(x)\geq0\big\} \right), \quad t\in[0,1].
	\end{equation*}  
	
	\begin{definition}\label{def:baxter-Z}
		Fix $\rho\in(-1,1)$ and $q\in[0,1]$. The \emph{skew Brownian permuton} of parameters $\rho, q$, denoted $ \mu_{\rho,q}$, is the push-forward of the Lebesgue measure on $[0,1]$ via the mapping $(\mathbb{I},\varphi_{ Z_{\rho,q}})$. That is,
		\begin{equation*}
			\mu_{\rho,q}(\cdot)=(\mathbb{I},\varphi_{ Z_{\rho,q}})_{*}\mathrm{Leb} (\cdot)= \mathrm{Leb}\left(\{t\in[0,1]\,|\,(t,\varphi_{ Z_{\rho,q}}(t))\in \cdot \,\}\right).
		\end{equation*} 
	\end{definition}

      An SLE/LQG description for the skew Brownian permuton $\mu_{\rho,q}$ is  given in~\cite[Theorem 1.17]{borga2021skewperm}. Let $\gamma\in(0,2)$ and $\theta\in(-\frac{\pi}{2},\frac{\pi}{2})$ be chosen such that $\rho = -\cos(\frac{\pi\gamma^2}{4})$ and $q = p_\gamma(\theta)$ where $p_\gamma(\theta)$ is the function in Theorem~\ref{thm:main}. Consider a unit area $\gamma$-LQG sphere $(\bbC,h,0,\infty)$ and an independent whole plane GFF $\widehat{h}$, and let $\eta',\eta'_{\theta-\frac{\pi}{2}}$ be space-filling counterflowlines of $\widehat{h}$ of angle 0 and $\theta-\frac{\pi}{2}$, both parameterized by the $\gamma$-LQG area induced by $h$. Then, roughly speaking, $\mu_{\rho,q}$ describes the order of the points $\{\eta'(t):t\in[0,1]\}$ that are hit by $\eta'_{\theta-\frac{\pi}{2}}$.

   For a permutation $\sigma$ of $\{1,...,n\}$, we define its inversion rate by 
   $$\widetilde{\mathrm{occ}}(21, \sigma) = \frac{\#\{1\leq i_1<i_2\leq n: \sigma(i_2)>\sigma(i_1)\}}{\tbinom{n}{2}}.$$
   For a permuton $\mu$, its inversion rate is defined by
    \begin{equation}\label{eqn-permuton-21}
			\widetilde{\mathrm{occ}}(21, \mu) = 2\iint_{[0,1]^{2}}\mathds{1}_{\left\{x_1<x_2;\,y_1>y_2\right\}}\mu(dx_1dy_1)\mu(dx_2dy_2).
		\end{equation}
      It is shown in \cite[Theorem 2.5]{bassino2017universal} that if $( \sigma_n)_n$ is a sequence of random permutations converging in distribution in the permuton sense to a limiting random permuton $ \mu$, i.e.\ $ \mu_{ \sigma_n}\xrightarrow[]{d} \mu$, then 
	$ \widetilde{\mathrm{occ}}(21,\sigma_n) $ converges in distribution to $\widetilde{\mathrm{occ}}(21,\mu)$. In~\cite[Proposition 1.14]{BHSY23}, we expressed the expected inversion rate $\widetilde{\mathrm{occ}}(21, \mu_{\rho,q})$ of the skew Brownian permuton in terms of the angle $\theta$ based on the SLE/LQG description of the skew Brownian permuton. Combined with Theorem~\ref{thm:main}, we have
    \begin{theorem}\label{thm-involution}
        	For all $(\rho,q)\in(-1,1)\times[0,1]$, let $\theta\in[-\frac{\pi}{2}, \frac{\pi}{2}]$ be related to $q$ by the relation $q = p_\gamma(\theta)$ given in Theorem~\ref{thm:main} with $\gamma\in(0,2)$ such that $\rho=-\cos(\pi\gamma^2/4)$. Then
		\begin{equation}\label{eq:inversion}
			\bbE[(\widetilde{\mathrm{occ}}(21, \mu_{\rho,q}))] = \frac{\pi-2\theta}{2\pi}.
		\end{equation}
    \end{theorem}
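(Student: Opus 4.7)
The plan is to directly combine the main result of this paper, Theorem~\ref{thm:main}, with the expression for the expected inversion rate of the skew Brownian permuton established in \cite[Proposition 1.14]{BHSY23}.

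First, I would invoke \cite[Proposition 1.14]{BHSY23}, which uses the SLE/LQG description of the skew Brownian permuton $\mu_{\rho,q}$ (recalled above from \cite[Theorem 1.17]{borga2021skewperm}) to express $\bbE[\widetilde{\mathrm{occ}}(21,\mu_{\rho,q})]$ as an explicit function of the SLE angle $\theta$. The idea behind that result is that, by the SLE/LQG representation of $\mu_{\rho,q}$, the double integral \eqref{eqn-permuton-21} equals the probability that, for two independent uniform points $t_1, t_2 \in [0,1]$ with $t_1 < t_2$, the angle-$(\theta-\pi/2)$ space-filling counterflow line $\eta'_{\theta - \pi/2}$ visits $\eta'(t_2)$ before $\eta'(t_1)$. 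This probability can then be computed using imaginary geometry and scale invariance of the quantum sphere to give exactly $(\pi-2\theta)/(2\pi)$.

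Second, to convert this expression from the SLE/LQG parameterization $(\gamma, \theta)$ to the skew Brownian permuton parameterization $(\rho, q)$, I would use the two identifications $\rho = -\cos(\pi\gamma^2/4)$ (immediate from the SLE/LQG construction of $\mu_{\rho,q}$) and $q = p_\gamma(\theta)$. The existence of the second relation was already known from \cite{GHS16}, but only Theorem~\ref{thm:main} of the present paper makes it explicit, allowing one to recover $\theta$ unambiguously from $q$ for every admissible $(\rho, q)\in (-1,1)\times [0,1]$ (together with a mild monotonicity check on the right-hand side of \eqref{eq:p-theta} in $\theta$, which ensures that the assignment $\theta\mapsto p_\gamma(\theta)$ is a bijection onto $[0,1]$).

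The main obstacle in the overall program is Theorem~\ref{thm:main} itself; once that is in hand, the deduction of Theorem~\ref{thm-involution} is essentially a direct application of \cite[Proposition 1.14]{BHSY23} under the parameter change $q = p_\gamma(\theta)$. Hence my proposal would simply cite Proposition 1.14 of \cite{BHSY23} and Theorem~\ref{thm:main}, and invoke this two-step reasoning to conclude \eqref{eq:inversion}.
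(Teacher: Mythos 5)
Your proposal is correct and matches the paper's own derivation: the paper obtains Theorem~\ref{thm-involution} exactly by citing \cite[Proposition 1.14]{BHSY23} for the formula $(\pi-2\theta)/(2\pi)$ in terms of the SLE angle, and then using Theorem~\ref{thm:main} (with $\rho=-\cos(\pi\gamma^2/4)$) to translate the parameterization from $(\rho,q)$ to $(\gamma,\theta)$ via $q=p_\gamma(\theta)$. No further comment is needed.
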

    Finally, in~\cite[Theorem 3.8]{BHSY23}, the intensity measure $\bbE[\mu_{\rho,q}]$ is expressed in terms of the quantum area of quantum disks with fixed boundary lengths, where the weights of the quantum disks are determined linearly according to the angle $\theta$. On the other hand, in~\cite{arsz-structure-constants}, with Remy and Zhu the first two named authors computed the Laplace transform of the density function for the quantum area of quantum disks (see also~\eqref{eq-R-thin}). Therefore the intensity measure $\bbE[\mu_{\rho,q}]$ can in principle be solved by further applying the $q=p_\gamma(\theta)$ relation as in Theorem~\ref{thm:main}, but the formula is rather involved in general.

%In particular, the relation~\eqref{eq:simple-rel} yields the explicit involution rate and the  density function  for the skew Brownian permuton. 

\subsection{Proof strategy}\label{subsec:strategy}

 As pioneered by Sheffield~\cite{She16a}, a cornerstone of random planar geometry is the \emph{conformal welding} of random surfaces, where the interface under the conformal welding of two independent LQG surfaces is an SLE curve. Similar type of results were also proved in~\cite{DMS14,AHS20,ASY22,AHSY23, AG23a}.

 Liouville conformal field theory (LCFT) is a 2D quantum field theory rigorously developed in~\cite{DKRV16} and subsequent works. It has been demonstrated that many natural LQG surfaces can be described by LCFT~\cite{AHS17,cercle2021unit,AHS21,ASY22}. LCFT is in the framework of Belavin, Polyakov, and Zamolodchikov's conformal field theory~\cite{BPZ84}, has been extensively explored in the physics literature~\cite{DO94, ZZ96, PT_boundary_3pt} and mathematically in~\cite{KRV20,RZ20b,AGS21, GKRV20_bootstrap, GKRV21_Segal, arsz-structure-constants}, and enjoys rich and deep exact solvability. Alongside the conformal welding of LQG surfaces mentioned earlier, with various coauthors we derived several exact formulae regarding SLE and their loop variants called conformal loop ensembles~\cite{AHS21,AGS21,AS21,ASYZ24,LSYZ24}. 

 Our proofs of Theorem~\ref{thm:main} and Theorem~\ref{thm:main-1} are another application of the theory of imaginary geometry, the conformal welding of random surfaces together with integrability of LCFT. 
 In Theorem~\ref{thm:weld-0}, we show that by drawing an independent $\SLE_\kappa(W-2;-W,W-2)$ curve on top of a quantum wedge of weight $W$, the surface that is cut off from the wedge is a \emph{quantum triangle}, a finite area LQG surface with three marked boundary points we introduced in~\cite{ASY22}. Now by the conformal welding of quantum wedges~\cite[Theorem 1.2]{DMS14}, the surfaces $\mathcal{W}^\rmL$ and $\mathcal{W}^\rmR$ formed by the parts of $\eta'([0,\infty))$ that lies on the left (resp.\ right) hand side of $\eta_\theta$ are quantum wedges of weights $W^\rmL$ and $W^\rmR$, where $W^\rmL$ and $W^\rmR$ is explicit in terms of $\theta$. We then sample a point $z$ on the flow line $\eta_\theta$,  let  $\sigma_1$ be  the time when $\eta'$ hits $z$, and consider the left and right boundaries $\eta^\rmL$ and $\eta^\rmR$ of $\eta'((-\infty,\sigma_1))$. The theory of imaginary geometry~\cite[Theorem 1.1 and Theorem 1.4]{MS16a} yields that $\eta^\rmL$ and $\eta^\rmR$ are $\SLE_\kappa(W^\rmL-2;-W^\rmL,W^\rmL-2)$  and $\SLE_\kappa(W^\rmR-2;-W^\rmR,W^\rmR-2)$ curves in  $\mathcal{W}^\rmL$ and $\mathcal{W}^\rmR$, respectively. Therefore the surfaces $\mathcal{T}^\rmL$ and $\mathcal{T}^\rmR$ formed by the parts of $\eta'((0,\sigma_1))$ that are on the left or right hand side of $\eta_\theta$ are quantum triangles. On the other hand, using the contour decomposition described in~\cite[Proposition 3.2]{GHS16} (see also Proposition~\ref{prop:contour}), we can express the constants $p_\gamma(\theta)$ and $c_\gamma(\theta)$ in terms of quantum area of the quantum triangles $\mathcal{T}^\rmL$ and $\mathcal{T}^\rmR$. Combined with the results on the boundary three-point function for LCFT from~\cite{arsz-structure-constants} as listed in Section~\ref{sec:pre-lcft-disk}, we calculate in Section~\ref{subsec:area-lcft}  the quantum areas of $\mathcal{T}^\rmL$ and $\mathcal{T}^\rmR$, which eventually leads to the formulae~\eqref{eq:p-theta} and~\eqref{eq:c-theta}.
 We point out two new features of our proofs compared to previous work involving SLE and LCFT. One is having quantum wedges and quantum triangles in the same picture;  the other is the crucial usage of the boundary three-point function of LCFT with positive bulk cosmological constant as derived in~\cite{arsz-structure-constants}.

\medskip
\noindent {\bf Acknowledgement.} We thank Scott Sheffield for raising this question in 2015. We thank Ewain Gwynne and Jacopo Borga for helpful communication on their related ongoing project. The second named author would like to  thank  Ewain Gwynne, Nina Holden, Yiting Li, and Sam Watson for collaborations on earlier related projects. M.A.\ was partially supported by NSF grant DMS-2348201. X.S.\ was partially supported by National Key R\&D Program of China (No.\ 2023YFA1010700). P.Y. was partially supported by NSF grant DMS-1712862.

\section{Preliminaries}\label{sec:pre}

%In this section we recall some preliminaries. 
In Section \ref{sec:pre-gff}, we start with the definition of the Gaussian free field (GFF) and review the definition of quantum surfaces. In Section \ref{sec:lcft-qt}, we review the basic settings of Liouville conformal field theory and quantum triangles. In Section \ref{sec:pre-lcft-disk}, we review the LCFT description of quantum disks in \cite[Section 2]{AHS21} and provide the counterpart for quantum wedges. In Section \ref{sec:pre-ig}, we recall the basics of SLE and imaginary geometry flow lines.   

In this paper we work with non-probability measures and extend the terminology of ordinary probability to this setting. For a finite or $\sigma$-finite  measure space $(\Omega, \mathcal{F}, M)$, we say $X$ is a random variable if $X$ is an $\mathcal{F}$-measurable function with its \textit{law} defined via the push-forward measure $M_X=X_*M$. In this case, we say $X$ is \textit{sampled} from $M_X$ and write $M_X[f]$ for $\int f(x)M_X(dx)$. \textit{Weighting} the law of $X$ by $f(X)$ corresponds to working with the measure $d\tilde{M}_X$ with Radon-Nikodym derivative $\frac{d\tilde{M}_X}{dM_X} = f$, and \textit{conditioning} on some event $E\in\mathcal{F}$ (with $0<M[E]<\infty$) refers to the probability measure $\frac{M[E\cap \cdot]}{M[E]} $  over the space $(E, \mathcal{F}_E)$ with $\mathcal{F}_E = \{A\cap E: A\in\mathcal{F}\}$. If $M$ is finite, we write $|M| = M(\Omega)$ and $M^\# = \frac{M}{|M|}$ for its normalization. %Throughout this section, we also fix the notation $|z|_+:=\max\{|z|,1\}$ for $z\in\mathbb{C}$. 

\subsection{The Gaussian free field and quantum surfaces}\label{sec:pre-gff} 

Fix a  domain $D\subset \mathbb{C}$. We consider the \textit{Neumann GFF} (or \textit{free boundary GFF}) $h_D$ on $D$ by starting with the Hilbert space $H(D)$,  the closure of $C^\infty(D)$ (the space of smooth functions with finite Dirichlet energy modulo a global constant) with respect to the inner product $(f, g)_\nabla = \int_D(\nabla f\cdot \nabla g)dxdy$.   Then set 
\begin{equation}\label{eqn-def-gff}
h_D = \sum_{n=1}^\infty \xi_nf_n
\end{equation}
where $(\xi_n)_{n\ge 1}$ is a collection of i.i.d. standard Gaussians and $(f_n)_{n\ge 1}$ is an orthonormal basis of $H(D)$. One can show that the sum \eqref{eqn-def-gff} a.s.\ converges to a random distribution independent of the choice of the basis $(f_n)_{n\ge 1}$. We emphasize that $H(D)$ and \eqref{eqn-def-gff} are defined modulo an additive (random) global constant, which could be fixed in various ways. For $D = \mathcal{S}$, the horizontal strip $\mathbb{R}\times (0, \pi)$, we fix the constant by requiring every function in $H(\mathcal{S})$ has mean value zero on $\{0\}\times [0, \pi]$, %while for $D=\mathbb{H}$, the upper half plane $\{z:\text{Im} z>0\}$, every function in $H(\mathbb{H})$ should have zero average value on the semicircle $\{e^{i\theta}:\theta\in(0, \pi)\}$,
and we denote the corresponding laws of $h_D$ by $P_{\mathcal{S}}$. %and $P_{\mathbb{H}}$. 
See \cite[Section 4.1.4]{DMS14} for more details. 

For $h_{\mathcal S}$, the covariance kernel $G_D(z,w) := \mathbb E[h_D(z) h_D(w)]$ is given by \begin{equation}\label{eqn-gff-cov}
\begin{split}
G_{\mathcal{S}}(z,w) = -\log|e^z-e^w|-\log|e^z-e^{\bar{w}}| +2\max\{\text{Re}\,z, 0\}+2\max\{\text{Re}\,w, 0\}.
\end{split}
\end{equation}
%Note that the notion $h_D(z)$ is defined by first taking the circle average $h_{D,\epsilon}(z)$ of $h_D$ over $\partial B(z, \epsilon)$ and then sending $\epsilon\to 0$. 

We will also need the radial-lateral decomposition of $h_{\mathcal{S}}$. Consider the subspace $H_1(\mathcal{S})\subset H(\mathcal{S})$ (resp. $H_2(\mathcal{S})\subset H(\mathcal{S})$) of functions which are constant (resp.\ have mean zero) on $[t, t+i\pi]:=\{t\}\times (0, \pi)$ for each $t\in \bbR$. Then we have the orthogonal decomposition $H(\mathcal{S}) =  H_1(\mathcal{S})\oplus H_2(\mathcal{S})$, yielding  
\begin{equation}\label{eqn-gff-decom}
h_\mathcal{S} = h_\mathcal{S}^1+h_\mathcal{S}^2
\end{equation}
where $h_\mathcal{S}^1$ and $h_\mathcal{S}^2$ are independent. 
 %by gathering the corresponding orthonormal basis of $H_1(\mathcal{S})$ and $H_2(\mathcal{S})$. 
 Moreover, writing $h_\mathcal{S}^1(t)$ for the common value of $h_\mathcal{S}^1$ on $[t, t+i\pi]$, the process $\{h_\mathcal{S}^1(t)\}_{t\in\mathbb{R}}$ agrees in law with $\{B_{2t}\}_{t\in\mathbb{R}}$ where $\{B_t\}_{t\in\mathbb{R}}$ is the standard two-sided Brownian motion. 
 See \cite[Section 4.1.6]{DMS14} for more details. 

 Now we turn to Liouville quantum gravity and  quantum surfaces. Throughout this paper, we fix the LQG coupling constant $\gamma\in(0,2)$ and set
\begin{equation*}
Q = \frac{2}{\gamma}+\frac{\gamma}{2}, \qquad \qquad \kappa = \gamma^2.
\end{equation*}

For two tuples $(D,h,z_1, ..., z_m)$ and $(\tilde{D}, \tilde{h}, \tilde{z}_1, ..., \tilde{z}_m)$, where $D$ and $\tilde{D}$ are domains on $\mathbb{C}$ with $(z_1, ..., z_m)$ and $(\tilde{z}_1, ..., \tilde{z}_m)$ being $m$ marked points on the bulk and the boundary of $D$ and $\tilde{D}$, and $h$ (resp.\,$\tilde{h}$) a distribution on $D$ (resp.\,$\tilde{D}$), we say  
\begin{equation}\label{eqn-qs-relation}
(D, h, z_1, ..., z_m)\sim_\gamma (\tilde{D}, \tilde{h}, \tilde{z}_1, ..., \tilde{z}_m)
\end{equation}
if one can find a conformal mapping $\psi:D\to \tilde D$ such that $\psi(z_j) = \tilde z_j$ for each $j$ and $\tilde{h} = \psi\bullet_\gamma h$ where 
 $\psi \bullet_\gamma h := h\circ \psi^{-1}+Q\log|(\psi^{-1}) '|$.
We call an equivalence class $(D, h, z_1, ..., z_m)/{\sim_\gamma}$ a $\gamma$-\textit{quantum surface}. We define \emph{curve-decorated} quantum surfaces by considering tuples $(D,h, z_1, \dots, z_m, \eta_1, \dots, \eta_n)$ where the $\eta_i$ are curves in $\ol D$, and in the definition of $\sim_\gamma$ we further require that $\psi\circ \eta_i$ agrees with $\tilde \eta_i$ up to monotone reparametrization.

For a $\gamma$-quantum surface $(D, h, z_1, ..., z_m)$, its \textit{quantum area measure} $\mu_h$ is defined by the weak limit $\mu_h := \lim_{\eps \to 0}\epsilon^{\frac{\gamma^2}{2}}e^{\gamma h_\epsilon(z)}d^2z$, where $d^2z$ is the Lebesgue area and $h_\epsilon(z)$ is the circle average of $h$ over $\partial B(z, \epsilon)$. When $D=\mathcal{S}$, we can also define the  \textit{quantum boundary length measure} $\nu_h:=\lim_{\epsilon\to 0}\epsilon^{\frac{\gamma^2}{4}}e^{\frac{\gamma}{2} h_\epsilon(x)}dx$ where $h_\epsilon (x)$ is the average of $h$ over the semicircle $\{x+\epsilon e^{i\theta}:\theta\in(0,\pi)\}$. It has been shown in \cite{DS11, SW16, berestycki-gmc} that all these weak limits are well-defined  for the GFF and its variants we are considering in this paper. Moreover, the quantum area and boundary length measures are intrinsic to the quantum surface: if $\psi$ is a conformal automorphism of $\mathcal{S}$ and $\tilde h = \psi \bullet_\gamma h$, then $\psi_* \mu_h =\mu_{\tilde h}$ and $\psi_* \nu_h = \nu_{\tilde h}$. As such, the definition of $\nu_{h}$ can be extended to general simply connected domains $D \subset \bbC$ using $\bullet_\gamma$.

We now recall the notions of quantum disk and quantum wedge  introduced in \cite[Section 4.5]{DMS14}. We start from the thick case where $W>\frac{\gamma^2}{2}$. Let $\beta = \gamma+ \frac{2-W}{\gamma}<Q$.

\begin{definition}[Thick quantum disk]
    Suppose $\psi_1$ and $\psi_2$ are independent distributions on $\mathcal{S}$ such that: 
	\begin{enumerate}
		\item $\psi_1$ has the same law as {$X_{\mathrm{Re}(\cdot)}$, where $X_t$ is the random function }
		\begin{equation}
		X_t:=\left\{ \begin{array}{rcl} 
			B_{2t}-(Q-\beta) t & \mbox{for} & t\ge 0\\
			\tilde{B}_{-2t} +(Q-\beta) t  & \mbox{for} & t<0
		\end{array} 
		\right.
		\end{equation}
        and $(B_t)_{t\ge 0}$ and $(\tilde{B}_t)_{t\ge 0}$ are standard Brownian motions conditioned on  $B_{2t}-(Q-\beta)t<0$ and  $ \tilde{B}_{2t} - (Q-\beta)t<0$ for all $t>0$, 
		\item $\psi_2$ has the same law as $h_{\mathcal{S}}^2$ described in \eqref{eqn-gff-decom}.
	\end{enumerate}
   Let $\mathbf{c}$  be sampled from $\frac{\gamma}{2}e^{(\beta-Q)c}dc$ independently from $(\psi_1,\psi_2)$, and set $\psi=\psi_1+\psi_2+\mathbf{c}$. We write  $\mathcal{M}^{\textup{disk}}_2(W)$ for the infinite measure describing the law of $(\mathcal{S}, \psi, -\infty, +\infty)/{\sim_\gamma}$. We call a sample from $\mathcal{M}^{\textup{disk}}_2(W)$ a (two-pointed) \emph{quantum disk of weight $W$}.
\end{definition}

\begin{definition}[Thick quantum wedge]
    Suppose $\psi_1$ and $\psi_2$ be independent distributions on $\mathcal{S}$ such that: 
	\begin{enumerate}
		\item $\psi_1$ has the same law as {$X_{\mathrm{Re}(\cdot)}$, where $X_t$ is the random function }
		\begin{equation}
		X_t:=\left\{ \begin{array}{rcl} 
			B_{2t}-(Q-\beta) t & \mbox{for} & t\ge 0\\
			\tilde{B}_{-2t} +(Q-\beta) t & \mbox{for} & t<0
		\end{array} 
		\right.
		\end{equation}
        and $(B_t)_{t\ge 0}$ and $(\tilde{B}_t)_{t\ge 0}$ are standard Brownian motions conditioned on  $ \tilde{B}_{2t} - (Q-\beta)t<0$ for all $t>0$;  
		\item $\psi_2$ has the same law as $h_{\mathcal{S}}^2$ described in \eqref{eqn-gff-decom}.
	\end{enumerate}
	 Set $\psi=\psi_1+\psi_2$. Let  $\mathcal{M}^{\textup{wedge}}_2(W)$ be the law of $(\mathcal{S}, \psi, -\infty, +\infty)/{\sim_\gamma}$. We call a sample from $\mathcal{M}^{\textup{wedge}}_2(W)$ a (two-pointed) \emph{quantum wedge of weight $W$}.
\end{definition}

When $0<W<\frac{\gamma^2}{2}$, we can also define the \textit{thin} quantum disks and wedges as a concatenation of weight $\gamma^2-W$ (two-pointed) thick disks as in \cite[Section 2]{AHS20}.

\begin{definition}[Thin quantum disk]\label{def-thin-disk}
	For $W\in(0, \frac{\gamma^2}{2})$, the infinite measure $\mathcal{M}_2^{\textup{disk}}(W)$ on two-pointed beaded surfaces is defined as follows. First sample $T$ from $(1-\frac{2}{\gamma^2}W)^{-2}\textup{Leb}_{\mathbb{R}_+}$, then sample a Poisson point process $\{(u, \mathcal{D}_u)\}$ from the intensity measure $\textbf{1}_{t\in [0,T]}dt\times \mathcal{M}_2^{\textup{disk}}(\gamma^2-W)$ and finally concatenate the disks $\{\mathcal{D}_u\}$ according to the ordering induced by $u$. The total sum of the left (resp. right) boundary lengths of all the $\mathcal{D}_u$'s is referred as the left (resp. right) boundary length of the thin quantum disk.
\end{definition}

\begin{definition}[Thin quantum wedge]\label{def-thin-wedge}
	For $W\in(0, \frac{\gamma^2}{2})$, the probability measure $\mathcal{M}_2^{\textup{wedge}}(W)$ on two-pointed beaded surfaces is defined as follows. Sample a Poisson point process $\{(u, \mathcal{D}_u)\}$ from the intensity measure $\textbf{1}_{t>0}dt\times \mathcal{M}_2^{\textup{disk}}(\gamma^2-W)$ and  concatenate the disks $\{\mathcal{D}_u\}$ according to the ordering induced by $u$.
\end{definition}

We also mention the definition of quantum cones. Let $\mathcal{C}:=\bbR\times [0,2\pi]/\sim$ be the infinite cylinder where we identify $\bbR\times\{0\}$ with  $\bbR\times\{2\pi\}$ via the equivalence $x\sim x+2\pi i$. {Let $H(\cC)$ be the closure of the space of smooth functions with finite Dirichlet energy with mean zero on $[t, t + 2\pi i] := \{ t\} \times [0,2\pi]$, and let $H_1(\cC) \subset H(\cC)$ (resp.\ $H_2(\cC) \subset H(\cC)$) be the space of functions which are constant (resp.\ have mean zero) on $[t, t + 2\pi i]$ for each $t \in \bbR$. We can similarly define the GFF $h_\cC$ and its independent decomposition $h_\cC = h_\cC^1 + h_\cC^2$ as before.} %Then we can define the spaces $H(\mathcal{C}),H_1(\mathcal{C})$,$H_2(\mathcal{C})$ and the decomposition $h=h_{\mathcal{C}}^1+h_{\mathcal{C}}^2$ as in the strip case.
\begin{definition}[Quantum cone]
    Fix $W>0$ and let $\alpha:=Q-\frac{W}{2\gamma}$. Suppose $\psi_1$ and $\psi_2$ be independent distributions on $\mathcal{C}$ such that: 
	\begin{enumerate}
		\item $\psi_1$ has the same law as {$X_{\mathrm{Re}(\cdot)}$, where $X_t$ is the random function }
		\begin{equation}
		X_t:=\left\{ \begin{array}{rcl} 
			B_{t}-(Q-\alpha) t & \mbox{for} & t\ge 0\\
			\tilde{B}_{-t} +(Q-\alpha) t & \mbox{for} & t<0
		\end{array} 
		\right.
		\end{equation}
        and $(B_t)_{t\ge 0}$ and $(\tilde{B}_t)_{t\ge 0}$ are standard Brownian motions conditioned on  $ \tilde{B}_{t} - (Q-\alpha)t<0$ for all $t>0$;  
		\item $\psi_2$ has the same law as $h_{\mathcal{C}}^2$.
	\end{enumerate}
	 Set $\psi=\psi_1+\psi_2$. Let  $\mathcal{M}^{\textup{cone}}_2(W)$ be the law of $(\mathcal{C}, \psi, -\infty, +\infty)/\sim_\gamma$. We call a sample from $\mathcal{M}^{\textup{cone}}_2(W)$ a (two-pointed) \emph{quantum cone of weight $W$}.
\end{definition}
We comment that the above quantum surfaces are defined using the GFF on the strip $\mathcal{S}$ and cylinder $\mathcal{C}$, and they can be embedded in other domains via  the equivalence relation $\sim_\gamma$.

\subsection{Liouville conformal field theory and quantum triangles}\label{sec:lcft-qt}
In this section, we review the fundamentals of Liouville CFT, and the definitions of quantum triangles.

We begin with the  LCFT on the strip. Recall that $P_\mathcal{S}$ is the law of the Neumann GFF on $\mathcal S$ normalized to have mean zero on $\{ 0 \} \times [0,\pi]$. For $\beta\in\bbR$, we shall use the shorthand 
\begin{equation}\label{eqn-delta-alpha}
\Delta_\beta:=\frac{\beta}{2}(Q-\frac{\beta}{2}).
\end{equation}

\begin{definition}\label{def-lf-H}
	Let $(h, \mathbf{c})$ be sampled from $P_\mathcal{S}\times [e^{-Qc}dc]$ and set $\phi = h - Q|\mathrm{Re}z|+\mathbf{c}$. We say $\phi$ is a \textup{Liouville field} on $\mathcal{S}$ and let $\textup{LF}_{\mathcal{S}}$ be its law.
\end{definition}

% \begin{definition}[Liouville field with insertions]\label{def-lf-strip}
% 	Let $(h, \mathbf{c})$ be sampled from $C_{\mathcal{S}}^{(\beta_1, +\infty), (\beta_2, -\infty), (\beta_3, s_3)}P_{\mathcal{S}}\times[e^{(\frac{\beta_1+\beta_2+\beta_3}{2}-Q)c}dc]$ with $\beta_1, \beta_2, \beta_3\in\mathbb{R}$,  $s_3\in\partial\mathcal{S}$ and
% 	$$C_{\mathcal{S}}^{(\beta_1, +\infty), (\beta_2, -\infty), (\beta_3, s_3)} = e^{(-\Delta_{\beta_3}+\frac{(\beta_1+\beta_2)\beta_3}{4})|\textup{Re}s_3|+\frac{(\beta_1-\beta_2)\beta_3}{4}\textup{Re}s_3}.$$
% 	Let $\phi(z) = h(z)+\frac{\beta_1+\beta_2-2Q}{2}|\textup{Re}z|+\frac{\beta_1-\beta_2}{2}\textup{Re}z+\frac{\beta_3}{2}G_{\mathcal{S}}(z, s_3)+\mathbf{c}$. We write $\textup{LF}_{\mathcal{S}}^{(\beta_1, +\infty), (\beta_2, -\infty), (\beta_3, s_3)}$ for the law of $\phi$.
% \end{definition}
\begin{definition}[Liouville field with insertions]\label{def-lf-strip}
	Let $(h, \mathbf{c})$ be sampled from $P_{\mathcal{S}}\times[e^{(\frac{\beta_1+\beta_2+\beta_3}{2}-Q)c}dc]$ with $\beta_1, \beta_2, \beta_3\in\mathbb{R}$.  
	Let $\phi(z) = h(z)+\frac{\beta_1+\beta_2-2Q}{2}|\textup{Re}z|+\frac{\beta_1-\beta_2}{2}\textup{Re}z+\frac{\beta_3}{2}G_{\mathcal{S}}(z, 0)+\mathbf{c}$. We write $\textup{LF}_{\mathcal{S}}^{(\beta_1, +\infty), (\beta_2, -\infty), (\beta_3, 0)}$ for the law of $\phi$.
\end{definition}

If $\beta_3=0$, then there are only two insertion points at $\pm\infty$ and we denote the corresponding measure by $\textup{LF}_{\mathcal{S}}^{(\beta_1, +\infty), (\beta_2, -\infty)}$.

Now we recall the notion of quantum triangle as in \cite{ASY22}. It is a quantum surface parameterized by weights $W_1,W_2,W_3>0$. If all the weights are greater than $\frac{\gamma^2}2$, the quantum triangle is defined via the Liouville field with three insertions, and the definition extends to the case where some of the $W_i$ lie in $(0,\frac{\gamma^2}2)$ via the thick-thin duality. See Figure~\ref{fig-qt}. 

\begin{definition}[Thick quantum triangles]\label{def-qt-thick}
	Fix {$W_1, W_2, W_3>\frac{\gamma^2}{2}$}. Set $\beta_i = \gamma+\frac{2-W_i}{\gamma}<Q$ for $i=1,2,3$, and sample $\phi$ from $\frac{1}{(Q-\beta_1)(Q-\beta_2)(Q-\beta_3)}\textup{LF}_{\mathcal{S}}^{(\beta_1, +\infty), (\beta_2, -\infty), (\beta_3, 0)}$. Then we define the infinite measure $\textup{QT}(W_1, W_2, W_3)$ to be the law of $(\mathcal{S}, \phi, +\infty, -\infty, 0)/{\sim_\gamma}$.
\end{definition}

\begin{definition}[Thin quantum triangles]\label{def-qt-thin}
	Fix {$W_1, W_2, W_3\in (0,\frac{\gamma^2}{2})\cup(\frac{\gamma^2}{2}, \infty)$}. Let $I:=\{i\in\{1,2,3\}:W_i<\frac{\gamma^2}{2}\}$ and define  $\textup{QT}(W_1, W_2, W_3)$ to be the law of the quantum surface $S$ constructed as follows. Let $\tilde{W}_i = W_i$ if $i\notin I$, and $\tilde{W}_i = \gamma^2-W_i$ if $i\in I$. First sample the surface $S_0 := (D, \phi, \tilde a_1, \tilde a_2, \tilde a_3)$ from $\textup{QT}(\tilde{W}_1, \tilde{W}_2, \tilde{W}_3)$; the marked point $\tilde a_i$ corresponds to the insertion of size $\tilde{\beta}_i = \gamma+ \frac{2-\tilde{W}_i}{\gamma}$. For each $i\in I$, {independently sample a surface $S_i$ from $(1-\frac{2W_i}{\gamma^2})\mathcal{M}_2^{\textup{disk}}(W_i)$} and concatenate it with $S_0$ at point $\tilde a_i$. Let $S$ be the concatenated quantum surface with three marked points $a_1,a_2,a_3$, where $a_i = \tilde a_i$ if $i \not \in I$, and $a_i$ is the second marked point of $S_i$ if $i \in I$. 
\end{definition} 

\begin{figure}[ht]
	\centering
	\includegraphics[scale=0.43]{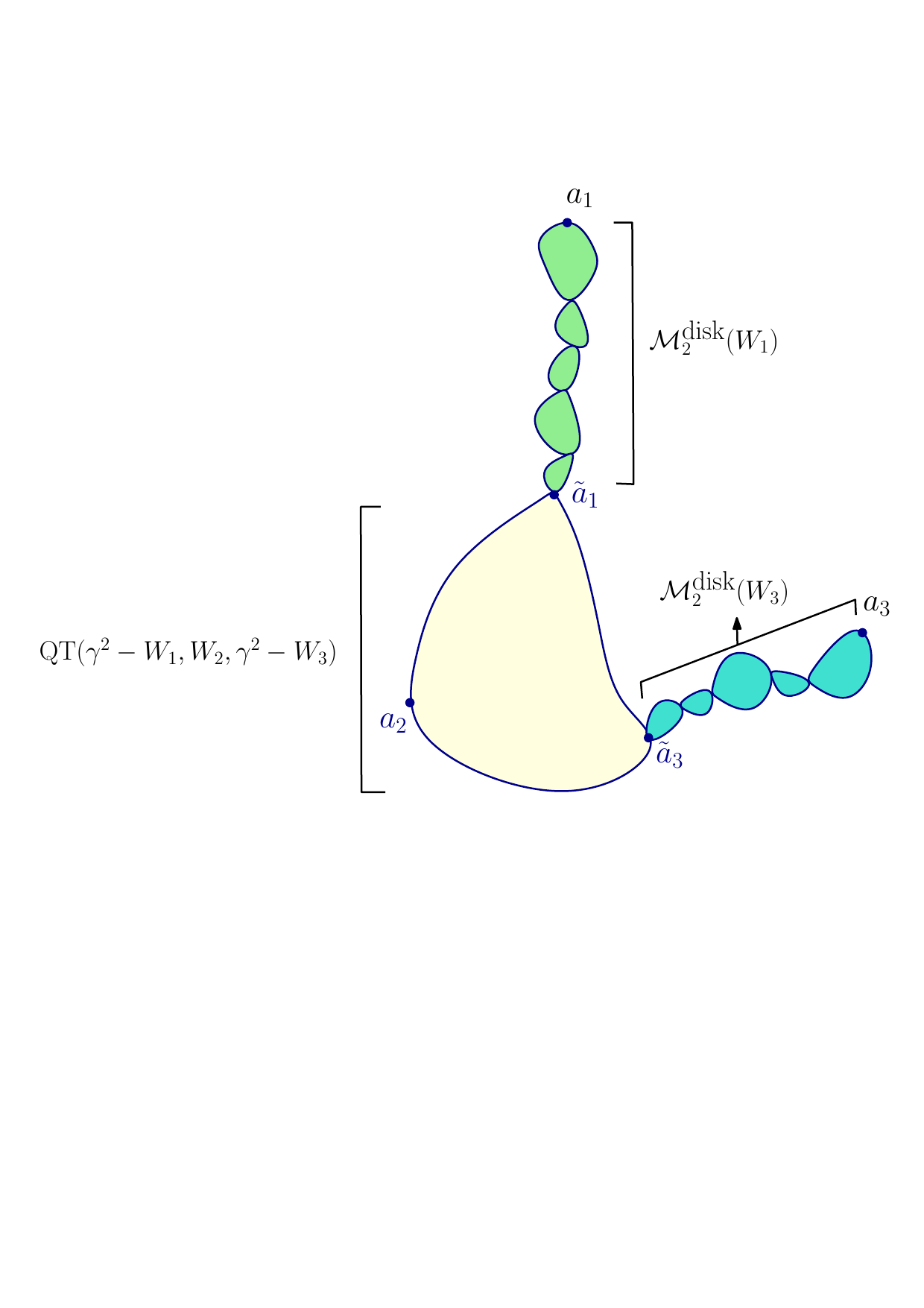}
	\caption{A quantum triangle with $W_2>\frac{\gamma^2}{2}$ and $W_1,W_3<\frac{\gamma^2}{2}$ embedded as $(D,\phi,a_1,a_2,a_3).$  The two thin disks (colored green) are concatenated with the thick triangle (colored yellow) at points $\tilde{a}_1$ and $\tilde{a}_3$.}\label{fig-qt}
\end{figure}

If $W_i = \frac{\gamma^2}2$ for some $i$, then $\QT(W_1,W_2,W_3)$ is defined by a limiting procedure. Since this is not needed in this paper, we omit the exact definition here. See \cite[Section 2.5]{ASY22} for more details.

We briefly recall the boundary length law of the three-pointed Liouville fields. We start with the \emph{disintegration} of  quantum surfaces with respect to boundary length. The quantum wedges have infinite boundary lengths. For quantum disks, one has the following disintegration
$$\Md_2(W) = \int_0^\infty \Md_2(W;\ell)\ d\ell$$
where each $\Md_2(W;\ell)$ is a  measure supported on the space of two-pointed quantum surfaces with right boundary arc having quantum length $\ell$. Similarly, for quantum triangles, we have the identity 
$$\QT(W_1,W_2,W_3) = \int_0^\infty \QT(W_1,W_2,W_3;\ell)\ d\ell$$
where each $\QT(W_1,W_2,W_3;\ell)$ is a measure supported on three-pointed quantum surfaces such that the boundary arc between the weight $W_1$ and $W_2$ points has quantum length $\ell$. One can also disintegrate over the quantum area of the surfaces. See \cite[Section 2.6]{AHS20} for more background on disintegration.

To describe the boundary length laws, we will need several special functions. The functions $\bar{R}$ and $\bar{H}$ are introduced for more general parameters (see \cite[Page 6-8]{RZ20b}) but for simplicity we only define the ones which will appear later.  For $b>0$, the \emph{double-gamma function} $\Gamma_b(z)$ is the meromorphic function on $\bbC$ such that for $\textup{Re}z>0$,
\begin{equation}\label{eqn-double-gamma}
\log\Gamma_b(z) = \int_0^\infty \frac{1}{t}\bigg(\frac{e^{-zt}-e^{-\frac{(b^2+1)t}{2b}}}{(1-e^{-bt})(1-e^{-\frac{1}{b}t})} -\frac{(b^2+1-2bz)^2}{8b^2}e^{-t}+\frac{2bz-b^2-1}{2bt} \bigg)dt.
\end{equation}
The function $\Gamma_b$ has poles at $\{-m b - n b^{-1} \:: \: m, n \in \mathbb Z_{\geq 0}\}$,
and satisfies the shift equations 
\begin{equation}\label{eqn-gamma-shift}
\frac{\Gamma_b(z)}{\Gamma_b(z+b)} = \frac{1}{\sqrt{2\pi}}\Gamma(bz)b^{-bz+\frac{1}{2}}, \ \ \frac{\Gamma_b(z)}{\Gamma_b(z+b^{-1})} = \frac{1}{\sqrt{2\pi}}\Gamma(b^{-1}z)b^{\frac{z}{b}-\frac{1}{2}}.
\end{equation}
For $\mu>0$, let
\begin{equation}
\bar{R}(\beta,\mu,0) = \bar{R}(\beta,0,\mu) = \mu^{\frac{2(Q-\beta)}{\gamma}}\frac{(2\pi)^{\frac{2(Q-\beta)}{\gamma}-\frac{1}{2}}(\frac{2}{\gamma})^{\frac{\gamma(Q-\beta)}{2}-\frac{1}{2}}   }{ (Q-\beta)\Gamma(1-\frac{\gamma^2}{4})^{\frac{2(Q-\beta)}{\gamma}} } \frac{\Gamma_{\frac{\gamma}{2}}(\beta-\frac{\gamma}{2})}{\Gamma_{\frac{\gamma}{2}}(Q-\beta)}.
\end{equation}
Finally set $\bar{\beta} = \beta_1+\beta_2+\beta_3$ and
\begin{equation}\label{eq-bar-H}
\bar{H}_{(0,1,0)}^{(\beta_1, \beta_2, \beta_3)} := \frac{ (2\pi)^{\frac{2Q-\bar{\beta}+\gamma}{\gamma}} (\frac{2}{\gamma})^{(\frac{\gamma}{2}-\frac{2}{\gamma})(Q-\frac{\bar{\beta}}{2})-1}   }{ \Gamma(1-\frac{\gamma^2}{4})^{\frac{2Q-\bar{\beta}}{\gamma}} \Gamma(\frac{\bar{\beta}-2Q}{\gamma}) }\frac{\Gamma_{\frac{\gamma}{2}}(\frac{\bar{\beta}}{2}-Q) \Gamma_{\frac{\gamma}{2}}(\frac{\bar{\beta}-2\beta_2}{2}) \Gamma_{\frac{\gamma}{2}}(\frac{\bar{\beta}-2\beta_1}{2} ) \Gamma_{\frac{\gamma}{2}} (Q-\frac{\bar{\beta}-2\beta_3}{2} )  }{ \Gamma_{\frac{\gamma}{2}}(Q)\Gamma_{\frac{\gamma}{2}}(Q-\beta_1)\Gamma_{\frac{\gamma}{2}}(Q-\beta_2)\Gamma_{\frac{\gamma}{2}}(\beta_3)   }.
\end{equation}
In Lemma~\ref{lm:bdry-length-law}, we give the disintegration of $\LF_{\mathcal S}^{(\beta_1, +\infty), (\beta_2, -\infty), (\beta_3, 0)}$ with respect to the quantum length of a boundary arc, and identify the law of this length in terms of $\bar{H}_{(0,1,0)}^{(\beta_1, \beta_2, \beta_3)}$.

 %\pu{[*To check: do we need this Lemma~\ref{lm:bdry-length-law} here, or shall we just use Proposition~\ref{prop:QT-bdry-length} only and remove Lemma~\ref{lm:bdry-length-law}  *]} \mo{[it seems to be used in the proof of Theorem~\ref{thm:weld-0}]}
\begin{lemma}\label{lm:bdry-length-law}
    Fix $\beta_1,\beta_2<Q$ and $\beta_3\in\bbR$. Write $\bar{\beta}=\beta_1+\beta_2+\beta_3$ and suppose
    \begin{equation}\label{eqn:seiberg}
        |\beta_1-\beta_2|<\beta_3, \qquad \bar{\beta}>\gamma.
    \end{equation}
    Let $h$ be a sample from $P_\mathcal{S}$ and set $\phi = h+ \frac{\beta_1+\beta_2-2Q}{2}|\textup{Re}z|+\frac{\beta_1-\beta_2}{2}\textup{Re}z+\frac{\beta_3}{2}G_{\mathcal{S}}(z, 0)$ (i.e., a sample from $\textup{LF}_{\mathcal{S}}^{(\beta_1, +\infty), (\beta_2, -\infty), (\beta_3, 0)}$ but without the additive constant $\mathbf{c}$)
     and $L:=\nu_\phi(\bbR+\pi i)$.  For $\ell>0$, let $\textup{LF}_{\mathcal{S},\ell}^{(\beta_1, +\infty), (\beta_2, -\infty), (\beta_3, 0)}$ be the law of $\phi+\frac{2}{\gamma}\frac{\ell}{L}$ under the reweighted measure $\frac{2}{\gamma}\frac{\ell^{\frac{1}{\gamma}(\bar{\beta}-2Q)-1}}{L_{12}^{\frac{1}{\gamma}(\bar{\beta}-2Q)}}P_{\mathcal{S}}(dh)$. %Then there is some finite constant $c$ depending only on $\beta_1,\beta_2,\beta_3$ and $\gamma$ such that 
    \begin{equation}
      \textup{LF}_{\mathcal{S}}^{(\beta_1, +\infty), (\beta_2, -\infty), (\beta_3, 0)} = \int_0^\infty  \textup{LF}_{\mathcal{S},\ell}^{(\beta_1, +\infty), (\beta_2, -\infty), (\beta_3, 0)}d\ell;\ \ |\textup{LF}_{\mathcal{S},\ell}^{(\beta_1, +\infty), (\beta_2, -\infty), (\beta_3, 0)}|=\frac{2}{\gamma}{\bar{H}_{(0,1,0)}^{(
		\beta_1,\beta_2,\beta_3)}} \ell^{\frac{\bar{\beta}-2Q}{\gamma}-1}.
    \end{equation}
\end{lemma}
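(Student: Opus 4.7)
The plan is to reparametrize the integral over the additive constant $\mathbf c$ in the definition of $\textup{LF}_{\mathcal{S}}^{(\beta_1,+\infty),(\beta_2,-\infty),(\beta_3,0)}$ by the quantum length $\ell$ of the top boundary arc $\mathbb{R}+i\pi$; the disintegration then falls out by Fubini, and the fiber mass reduces to a single GMC negative moment which I would identify with $\bar H_{(0,1,0)}^{(\beta_1,\beta_2,\beta_3)}$ via the boundary three-point function established in~\cite{arsz-structure-constants}. (I read ``$\phi+\tfrac{2}{\gamma}\tfrac{\ell}{L}$'' in the statement as $\phi+\tfrac{2}{\gamma}\log\tfrac{\ell}{L}$, and $L_{12}$ as $L$.)

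First I would write $\textup{LF}_{\mathcal{S}}^{(\beta_1,+\infty),(\beta_2,-\infty),(\beta_3,0)}$ as the pushforward of $P_{\mathcal{S}}(dh)\otimes e^{(\frac{\bar\beta}{2}-Q)c}dc$ under $(h,c)\mapsto \phi+c$, where $\phi$ is the field in Definition~\ref{def-lf-strip} without the additive constant. Because the boundary length measure transforms as $\nu_{\phi+c}(\mathbb{R}+i\pi)=e^{\gamma c/2}L$ with $L=\nu_{\phi}(\mathbb{R}+i\pi)$, the map $c\mapsto \ell:=e^{\gamma c/2}L$ is a diffeomorphism $\mathbb{R}\to(0,\infty)$ for each fixed $h$. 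Substituting $c=\tfrac{2}{\gamma}\log(\ell/L)$ and $dc=\tfrac{2}{\gamma}\tfrac{d\ell}{\ell}$ gives
$$e^{(\frac{\bar\beta}{2}-Q)c}\,dc \;=\; \frac{2}{\gamma}\,\ell^{\frac{\bar\beta-2Q}{\gamma}-1}\,L^{-\frac{\bar\beta-2Q}{\gamma}}\,d\ell,$$
and applying Fubini to swap the order of integration yields exactly the factorization $\textup{LF}_{\mathcal{S}}^{(\beta_1,+\infty),(\beta_2,-\infty),(\beta_3,0)}=\int_0^\infty \textup{LF}_{\mathcal{S},\ell}^{(\beta_1,+\infty),(\beta_2,-\infty),(\beta_3,0)}\,d\ell$ with the inner measure matching the one defined in the statement. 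Integrating $1$ over the fiber then produces
$$\bigl|\textup{LF}_{\mathcal{S},\ell}^{(\beta_1,+\infty),(\beta_2,-\infty),(\beta_3,0)}\bigr| \;=\; \frac{2}{\gamma}\,\ell^{\frac{\bar\beta-2Q}{\gamma}-1}\,\mathbb{E}_{P_{\mathcal{S}}}\!\left[L^{-\frac{\bar\beta-2Q}{\gamma}}\right].$$

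To finish, I would identify $\mathbb{E}_{P_{\mathcal{S}}}[L^{-(\bar\beta-2Q)/\gamma}]$ with the explicit expression~\eqref{eq-bar-H} for $\bar H_{(0,1,0)}^{(\beta_1,\beta_2,\beta_3)}$. This is exactly the boundary three-point structure constant of LCFT with insertions $(\beta_1,\beta_2,\beta_3)$, vanishing bulk cosmological constant, and boundary cosmological constants $(0,1,0)$ on the three arcs, so this identification is a direct citation of the ARSZ formula announced in Section~\ref{sec:pre-lcft-disk} and proved in~\cite{arsz-structure-constants}. The main obstacle is precisely this second step: the change of variables is routine bookkeeping, but the explicit double-gamma expression relies on BPZ-type shift equations and analytic continuation, and one must check that the Seiberg-type hypotheses $|\beta_1-\beta_2|<\beta_3$ and $\bar\beta>\gamma$ sit in the regime where the moment $\mathbb{E}_{P_{\mathcal{S}}}[L^{-(\bar\beta-2Q)/\gamma}]$ is finite and the ARSZ formula applies---this is where the stated hypotheses enter essentially.
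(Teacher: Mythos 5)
Your proposal is correct and follows essentially the same route as the paper, which omits the details precisely because the argument is the standard change of variables in the additive constant $\mathbf{c}$ plus Fubini, with the fiber mass identified as a GMC moment (the paper points to the same computation in \cite[Lemma 4.2]{AHS21} and \cite[Lemma 2.24]{ASY22}); you also correctly read the typos $\frac{2}{\gamma}\frac{\ell}{L}$ and $L_{12}$ as $\frac{2}{\gamma}\log\frac{\ell}{L}$ and $L$. The only small inaccuracy is attribution: the identification $\mathbb{E}_{P_\mathcal{S}}[L^{-(\bar\beta-2Q)/\gamma}]=\bar{H}_{(0,1,0)}^{(\beta_1,\beta_2,\beta_3)}$ (zero bulk cosmological constant) is the Remy--Zhu integrability result cited as \cite{RZ20a}/\cite{RZ20b} in the paper, not the structure constant $H$ with positive bulk cosmological constant from \cite{arsz-structure-constants}, which is a different object used later.
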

Lemma~\ref{lm:bdry-length-law} 
is a standard application of Fubini's theorem and the integrability results in \cite{RZ20a}; the proof is identical to, e.g., \cite[Lemma 4.2]{AHS21} and \cite[Lemma 2.24]{ASY22}. We omit the details. N Next, Proposition~\ref{prop:QT-bdry-length} for thick quantum triangles is an immediate consequence of Lemma~\ref{lm:bdry-length-law}, but some further input is necessary for thin quantum triangles. We will just state the result:

\begin{proposition}[{Proposition 2.24 of \cite{ASY22}}]\label{prop:QT-bdry-length}
    Fix {$W_1, W_2\in (0,\frac{\gamma^2}{2})\cup(\frac{\gamma^2}{2}, \infty)$} and $W_3>\frac{\gamma^2}{2}$. Let $\beta_i=\gamma+\frac{2-W_i}{\gamma}$ and $\tilde{\beta}_i = Q-|\beta_i-Q|$ for $i=1,2,3.$ Suppose 
    \begin{equation}
    |\tilde{\beta}_1-\tilde{\beta}_2|<\tilde{\beta}_3; \qquad 
    \ \tilde{\beta}_1+\tilde{\beta}_2+\tilde{\beta}_3>\gamma.
    \end{equation}
    Then for $\ell>0$, $|\QT(W_1,W_2,W_3;\ell)| = \frac{2}{\gamma(Q-\beta_1)(Q-\beta_2)(Q-\beta_3)}|\bar{H}_{(0,1,0)}^{(
	\beta_1,\beta_2,\beta_3)}| \ell^{\frac{\bar{\beta}-2Q}{\gamma}-1}$ (where $\ell$ corresponds to the quantum length of the arc between the weight $W_1$ and $W_2$ vertices).
\end{proposition}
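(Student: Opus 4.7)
The plan is to treat the thick and thin cases separately.

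\textbf{Thick case.} Suppose all $W_i > \gamma^2/2$, so $\beta_i < Q$ and $\tilde \beta_i = \beta_i$ for each $i$. By Definition~\ref{def-qt-thick}, a sample from $\QT(W_1,W_2,W_3)$ is the quantum surface class of $(\mathcal{S}, \phi, +\infty, -\infty, 0)$ with $\phi$ drawn from $\frac{1}{\prod_i(Q - \beta_i)} \textup{LF}_{\mathcal{S}}^{(\beta_1, +\infty), (\beta_2, -\infty), (\beta_3, 0)}$. The boundary arc of $\mathcal{S}$ between $+\infty$ (the weight $W_1$ vertex) and $-\infty$ (the weight $W_2$ vertex) avoiding $0$ (the weight $W_3$ vertex) is exactly the top edge $\bbR + \pi i$, whose $\nu_\phi$-length is conformally invariant under $\sim_\gamma$. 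The stated hypotheses on $\tilde \beta_i = \beta_i$ coincide with the Seiberg bounds \eqref{eqn:seiberg}, so Lemma~\ref{lm:bdry-length-law} directly gives the disintegration with mass $\frac{2}{\gamma}\bar H^{(\beta_1,\beta_2,\beta_3)}_{(0,1,0)} \ell^{(\bar\beta-2Q)/\gamma - 1}$; multiplying by $\frac{1}{\prod_i(Q-\beta_i)}$ yields the claim.

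\textbf{Thin case.} Suppose some $W_i$ with $i \in \{1,2\}$ lies in $(0, \gamma^2/2)$. By Definition~\ref{def-qt-thin}, a sample from $\QT(W_1, W_2, W_3)$ consists of a core thick triangle $S_0$ of weights $(\tilde W_1, \tilde W_2, W_3)$ with marked points $\tilde a_1, \tilde a_2, \tilde a_3$, together with, for each thin index $i$, an independent thin disk $S_i$ of weight $W_i$ (carrying the prefactor $1 - 2W_i/\gamma^2$) concatenated at $\tilde a_i$; the far endpoint of $S_i$ becomes $a_i$. The boundary arc of the concatenated surface between $a_1$ and $a_2$ avoiding $a_3$ decomposes as an outer arc of $S_1$ from $a_1$ to $\tilde a_1$ (if $W_1$ thin), the middle arc of $S_0$ from $\tilde a_1$ to $\tilde a_2$ avoiding $\tilde a_3$, and an outer arc of $S_2$ from $\tilde a_2$ to $a_2$ (if $W_2$ thin), with total length $\ell = \ell_1 + \ell_{\mathrm{mid}} + \ell_2$. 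The thick case applied to $S_0$ gives the middle arc length density as a power law in $\ell_{\mathrm{mid}}$ with parameters $\tilde \beta_i$, and Definition~\ref{def-thin-disk} combined with the mass formula for $\Md_2(\gamma^2 - W_i; \cdot)$ from \cite{AHS20} yields a power law in each $\ell_i$. Convolving via the Beta identity $\int_0^1 s^{a-1}(1-s)^{b-1}\,ds = B(a,b)$ collapses the integral to $\ell^{(\bar\beta - 2Q)/\gamma - 1}$ times a gamma-ratio constant; the shift equations \eqref{eqn-gamma-shift} for $\Gamma_{\gamma/2}$ then convert the $\tilde \beta_i$-constants from the core into the $\beta_i$-constants appearing in $\bar H^{(\beta_1, \beta_2, \beta_3)}_{(0,1,0)}$ and in the prefactor $\frac{2}{\gamma\prod_i(Q-\beta_i)}$.

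The main obstacle is the bookkeeping in the thin case: identifying which side of each thin disk contributes to the outer arc consistently with orientation, carrying the $1 - 2W_i/\gamma^2$ prefactor and the Poissonian intensity of Definition~\ref{def-thin-disk}, and verifying that the iterated Beta convolutions together with the $\Gamma_{\gamma/2}$ shift equations collapse exactly onto the gamma-ratio in $\bar H^{(\beta_1,\beta_2,\beta_3)}_{(0,1,0)}$. This computation is mechanical but lengthy; it is the content of~\cite{ASY22}, and the only new conceptual input is the thick case together with the already-established boundary-length disintegration of thin disks.
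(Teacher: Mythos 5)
Your proposal matches the paper's treatment: the paper proves nothing here beyond observing that the thick case is an immediate consequence of Lemma~\ref{lm:bdry-length-law} (which is exactly your thick-case argument) and otherwise cites \cite[Proposition 2.24]{ASY22} for the thin case. Your thin-case outline (one-sided thin-disk boundary-length laws, Beta convolution of power laws, and the double-gamma shift equations to pass from the $\tilde\beta_i$-constants to the $\beta_i$-constants) is a plausible shape of that cited computation, and like the paper you ultimately defer its verification to \cite{ASY22}, so there is nothing to fault relative to the paper's own proof.
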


Finally, we introduce the three-point structure constant of Liouville CFT with nonzero bulk cosmological constant. Let $\partial_1 \mathcal S = (0,+\infty)$, $\partial_2 \mathcal S = \bbR + \pi i$ and $\partial_3 \mathcal S = (-\infty, 0)$ be the three components of $\partial \mathcal S \backslash \{ 0, +\infty, -\infty\}$.  

\begin{definition}\label{def-H}
    Suppose $\beta_1, \beta_2, \beta_3 \in \bbR$ satisfy the Seiberg bounds $\sum_{i=1}^3 \beta_i > 2Q$ and $\beta_i < Q$ for $i = 1,2,3$, and let $\mu_1, \mu_2, \mu_3 \geq 0$. Set
    \[H^{(\beta_1, \beta_2, \beta_3)}_{(\mu_1, \mu_2, \mu_3)} := \LF_{\mathcal S}^{(\beta_1, +\infty), (\beta_2, -\infty), (\beta_3, 0)}[\exp(- \mu_\phi(\mathcal S) - \sum_{i=1}^3 \mu_i \nu_\phi(\partial_i \mathcal S)]. \]
    For $\sigma_1, \sigma_2, \sigma_3 \in [ -\frac1{2\gamma} + \frac Q2, \frac1{2\gamma} + \frac Q2]$, define
    \eqb\label{eq-H-sigma}
        H
        \begin{pmatrix}
        	\beta_1, \beta_2, \beta_3 \\
        	\sigma_1,  \sigma_2,   \sigma_3 
        \end{pmatrix} = H^{(\beta_1, \beta_2, \beta_3)}_{(\mu_B(\sigma_1), \mu_B(\sigma_2), \mu_B(\sigma_3))}, \qquad \mu_B (\sigma) := (\sin(\frac{\pi\gamma^2}4))^{-1/2} \cos(\pi \gamma(\sigma - \frac Q2)) .
    \eqe
\end{definition}
The range of $\sigma_i$ specified above are such that the  $\mu_B(\sigma_i)$ are nonnegative reals. 
	As an immediate consequence of this definition and Definition~\ref{def-qt-thick} we have the following.
	\begin{lemma} \label{lem-H-thick}
		Let $W_1, W_2, W_3 > \frac{\gamma^2}2$ and let $\beta_i = \gamma + \frac{2-W_i}{\gamma} < Q$ for $i = 1,2,3$. Suppose $\sum_i \beta_i > 2Q$. For a sample from $\QT(W_1, W_2, W_3)$, let $A$ denote its quantum area, and let $L_1, L_2, L_3$ denote the quantum lengths of its boundary arcs in counterclockwise order from its third marked point. Then 
        \eqb\label{eq-QT-H}
	\QT(W_1, W_2, W_3)[\exp(-A - \sum_{i=1}^3 \mu_i L_i)] = \prod_{i=1}^3 (Q-\beta_i)^{-1} H^{(\beta_1, \beta_2, \beta_3)}_{(\mu_1, \mu_2, \mu_3)}.
    \eqe
	\end{lemma}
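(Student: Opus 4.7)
The statement is essentially a direct unfolding of Definition~\ref{def-qt-thick} and Definition~\ref{def-H}, so the plan is to spell out the bookkeeping carefully and identify the boundary arcs.

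First I would note that by Definition~\ref{def-qt-thick}, sampling from $\QT(W_1, W_2, W_3)$ amounts to sampling $\phi$ from the weighted measure $\frac{1}{(Q-\beta_1)(Q-\beta_2)(Q-\beta_3)} \LF_{\mathcal{S}}^{(\beta_1, +\infty), (\beta_2, -\infty), (\beta_3, 0)}$ and passing to the equivalence class $(\mathcal{S}, \phi, +\infty, -\infty, 0)/{\sim_\gamma}$. The key point recalled in Section~\ref{sec:pre-gff} is that the quantum area measure $\mu_\phi$ and the quantum boundary length measure $\nu_\phi$ are intrinsic to the quantum surface, so the functionals $A = \mu_\phi(\mathcal{S})$ and $\nu_\phi(\partial_i \mathcal{S})$ descend to well-defined functions on the quotient by $\sim_\gamma$. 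This reduces the claim to an identity about $\LF_{\mathcal{S}}^{(\beta_1,+\infty),(\beta_2,-\infty),(\beta_3,0)}$.

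Next I would match up the boundary arc labels with the statement of the lemma. In the strip embedding the three marked points are $a_1 = +\infty$, $a_2 = -\infty$, $a_3 = 0$. Traversing $\partial \mathcal{S}$ counterclockwise starting at $a_3 = 0$, one first goes along the lower boundary $(0,+\infty) = \partial_1 \mathcal{S}$ to $a_1$, then along the top boundary $\mathbb{R} + \pi i = \partial_2 \mathcal{S}$ to $a_2$, and finally along $(-\infty, 0) = \partial_3 \mathcal{S}$ back to $a_3$. Hence the boundary lengths $L_1, L_2, L_3$ in the statement of the lemma equal $\nu_\phi(\partial_1 \mathcal{S}), \nu_\phi(\partial_2 \mathcal{S}), \nu_\phi(\partial_3 \mathcal{S})$ respectively.

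Combining these two observations, one has
\begin{align*}
\QT(W_1, W_2, W_3)\bigl[\exp(-A - \textstyle\sum_{i=1}^3 \mu_i L_i)\bigr]
&= \prod_{i=1}^3 (Q-\beta_i)^{-1} \cdot \LF_{\mathcal{S}}^{(\beta_1,+\infty),(\beta_2,-\infty),(\beta_3,0)}\bigl[\exp(-\mu_\phi(\mathcal{S}) - \textstyle\sum_{i=1}^3 \mu_i \nu_\phi(\partial_i \mathcal{S}))\bigr] \\
&= \prod_{i=1}^3 (Q-\beta_i)^{-1} \cdot H^{(\beta_1,\beta_2,\beta_3)}_{(\mu_1,\mu_2,\mu_3)},
\end{align*}
where the last equality is Definition~\ref{def-H}. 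Since $\beta_i = \gamma + \frac{2-W_i}\gamma < Q$ by hypothesis and $\sum_i \beta_i > 2Q$ is assumed, the Seiberg bounds required for $H^{(\beta_1,\beta_2,\beta_3)}_{(\mu_1,\mu_2,\mu_3)}$ to be defined are satisfied, so both sides are finite and the identity makes sense.

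There is no real obstacle here; the only potential pitfall is the counterclockwise orientation convention, which could easily be reversed if the strip were parametrized with $a_1, a_2$ swapped, so the one thing I would double check is that the convention used in Definition~\ref{def-H} (where $\partial_i \mathcal S$ is the arc opposite $a_i$, except cyclically shifted by the choice of basepoint $a_3$) matches the ``counterclockwise order from its third marked point'' convention in the lemma statement.
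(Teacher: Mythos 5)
Your proposal is correct and is exactly the argument the paper has in mind: it states Lemma~\ref{lem-H-thick} as an immediate consequence of Definition~\ref{def-qt-thick} and Definition~\ref{def-H}, and your unfolding (intrinsicness of $\mu_\phi$ and $\nu_\phi$ under $\sim_\gamma$, matching $L_i$ with $\nu_\phi(\partial_i\mathcal S)$, then reading off the Laplace transform from Definition~\ref{def-H}) is precisely that bookkeeping, including the correct check of the counterclockwise labeling convention. Nothing is missing.
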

Analogously to Proposition~\ref{prop:QT-bdry-length}, \cite[Theorem 1.1]{arsz-structure-constants} shows that $H$ agrees with the \emph{Ponsot-Teschner formula} conjectured in \cite{PT_boundary_3pt}. For our arguments, we will not use this exact formula, but instead manipulate various functional equations of $H$. To that end we need to  meromorphically extend $H$:

\begin{proposition}[{\cite[Theorem 1.1]{arsz-structure-constants}}] \label{prop-H-extend}
    The function $H
	\begin{pmatrix}
		\beta_1, \beta_2, \beta_3 \\
		\sigma_1,  \sigma_2,   \sigma_3
    \end{pmatrix}$ extends meromorphically to $H:\mathbb C^6 \to \mathbb C$.
\end{proposition}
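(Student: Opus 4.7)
The plan is the standard one in constructive LCFT: first establish analyticity of $H$ in the cosmological-constant variables $\mu_i = \mu_B(\sigma_i)$ using GMC moment bounds, then extend meromorphically in the Liouville-insertion parameters $\beta_i$ via BPZ shift equations.

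First I would fix $\beta_1,\beta_2,\beta_3$ in the Seiberg domain and regard $H^{(\beta_1,\beta_2,\beta_3)}_{(\mu_1,\mu_2,\mu_3)}$ as a function of $(\mu_1,\mu_2,\mu_3)$. Since $|\exp(-\mu_\phi(\mathcal{S}) - \sum_i \mu_i \nu_\phi(\partial_i \mathcal{S}))| \le 1$ whenever $\mathrm{Re}\,\mu_i \ge 0$, dominated convergence gives holomorphy on the product of three right half-planes. Taylor expanding the three boundary exponentials and performing the Gaussian integration as in \cite{RZ20a} produces boundary GMC moments expressible as Selberg-type integrals with explicit $\Gamma_{\gamma/2}$-quotients; combined with the boundary reflection principle for $\alpha$-insertions this yields a meromorphic extension in $(\mu_1,\mu_2,\mu_3) \in \bbC^3$. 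Since $\sigma \mapsto \mu_B(\sigma) = (\sin(\pi\gamma^2/4))^{-1/2}\cos(\pi\gamma(\sigma-Q/2))$ is entire, one then obtains meromorphy in $(\sigma_1,\sigma_2,\sigma_3) \in \bbC^3$.

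To extend in the $\beta_i$, I would derive four shift equations by inserting a degenerate boundary field of weight $-\gamma/2$ or $-2/\gamma$ at the boundary and using the BPZ second-order differential equation it must satisfy. Carrying out the operator product expansion with each of the three insertions at $\pm\infty$ and $0$ yields a linear relation between $H$ at $(\beta_1,\beta_2,\beta_3)$ and $H$ at the shifted values $\beta_i \pm \gamma/2$ or $\beta_i \pm 2/\gamma$, with coefficients that are explicit ratios of Gamma functions mirroring~\eqref{eqn-gamma-shift}. Iterating these shifts extends each $\beta_i$ to all of $\bbC$ starting from the Seiberg domain.

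The main obstacle is the simultaneous consistency of the two shift equations in each $\beta_i$, which is nontrivial because the periods $\gamma/2$ and $2/\gamma$ are generically incommensurable. A rigidity argument in the spirit of Kupiainen-Rhodes-Vargas shows that any meromorphic function on $\bbC^6$ satisfying both families of shift equations, together with a mild growth bound inherited from the GMC representation, is uniquely determined. The Ponsot-Teschner formula provides an explicit such function that is manifestly meromorphic in all six variables, and thus coincides with the continuation of $H$, completing the proof.
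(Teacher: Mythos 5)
You should first note that the paper does not prove this statement at all: Proposition~\ref{prop-H-extend} is quoted verbatim from \cite[Theorem 1.1]{arsz-structure-constants}, so there is no in-paper argument to compare against. What you have written is, in effect, a compressed outline of the proof of that cited theorem (GMC analyticity, BPZ shift equations in both $\frac{\gamma}{2}$ and $\frac{2}{\gamma}$, a uniqueness/rigidity step, and identification with the Ponsot--Teschner formula). As a strategy this is the right one, but as a proof it has real gaps, and it is worth being aware that you are re-deriving the main theorem of a long paper rather than supplying a short argument the authors omitted.

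Two concrete problems. First, your claimed meromorphic extension in $(\mu_1,\mu_2,\mu_3)\in\bbC^3$ does not follow from Taylor expanding the boundary exponentials: boundary GMC moments $\bbE[\nu_\phi(\partial_i\mathcal S)^n]$ exist only for $n$ below an explicit threshold depending on $\gamma$ and the $\beta_i$, so the series in the $\mu_i$ is not even termwise defined beyond that order, let alone convergent on all of $\bbC^3$. In the actual argument the analytic continuation is carried out in the $\sigma_i$ (and $\beta_i$) variables on restricted domains, and the global meromorphy is obtained only after the shift equations are combined with the explicit Ponsot--Teschner expression; meromorphy in $\mu$ is neither needed nor claimed. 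Second, the dual shift equation (the one with the degenerate insertion of weight $-\frac{2}{\gamma}$) cannot be obtained by a routine OPE/BPZ computation: that insertion has no direct GMC meaning, and producing this equation is precisely the hard technical core of \cite{arsz-structure-constants}. Treating it as symmetric to the $-\frac{\gamma}{2}$ case hides the main difficulty. Finally, your rigidity step needs the incommensurability of $\frac{\gamma}{2}$ and $\frac{2}{\gamma}$, which fails for rational $\gamma^2$; one must handle that case separately (e.g.\ by continuity in $\gamma$), exactly as the cited paper does and as this paper's own use of the shift equation at $\gamma=\sqrt2$ (remark after Lemma~\ref{lem-full_shift_H}) reflects.
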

We then extend the definition of $H^{(\beta_1, \beta_2, \beta_3)}_{(\mu_1, \mu_2, \mu_3)}$ to $\beta_1, \beta_2, \beta_3 \in \mathbb C$ and $\mu_1, \mu_2, \mu_3 \geq 0$ via~\eqref{eq-H-sigma}.
As we now see, the meromorphic continuation can be used to describe certain thin quantum triangles. The proof follows from the thick-thin duality defining thin quantum triangles and a reflection identity in Liouville CFT.

\begin{lemma}
	\label{lem-H-thin}
	
	Let $W_1 \in (0,\frac{\gamma^2}2)$ and
	$W_2, W_3 > \frac{\gamma^2}2$, let $\beta_i = \gamma + \frac{2-W_i}{\gamma}$ for $i =1,2,3$, and let $\mu_1, \mu_2, \mu_3 \geq 0$.   Suppose that $(2Q - \beta_1) + \beta_2 + \beta_3 > 2Q$, then~\eqref{eq-QT-H} holds. 
\end{lemma}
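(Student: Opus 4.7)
The plan is to apply the thick-thin duality (Definition~\ref{def-qt-thin}) to decompose the Laplace transform as a product of a thick triangle piece and a thin disk piece, and then combine the results via a boundary reflection identity for $H$.

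Set $\tilde W_1 = \gamma^2 - W_1 > \tfrac{\gamma^2}{2}$ and $\tilde\beta_1 = \gamma + (2-\tilde W_1)/\gamma = 2Q - \beta_1 < Q$. The Seiberg bound $\tilde\beta_1 + \beta_2 + \beta_3 > 2Q$ needed for Lemma~\ref{lem-H-thick} is exactly the hypothesis of the present lemma. By Definition~\ref{def-qt-thin}, a sample from $\QT(W_1, W_2, W_3)$ is obtained by gluing an independent pair $(S_0, S_1)$ at the first marked point of $S_0$, where $S_0 \sim \QT(\tilde W_1, W_2, W_3)$ and $S_1 \sim (1 - \tfrac{2W_1}{\gamma^2})\Md_2(W_1)$. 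The areas add, $L_3 = L_3(S_0)$, and each of $L_1, L_2$ picks up one of the two boundary arcs of $S_1$, so independence gives a product factorization of the Laplace transform. By Lemma~\ref{lem-H-thick} (and $(Q-\tilde\beta_1)^{-1} = -(Q-\beta_1)^{-1}$), the thick triangle factor equals $-\prod_i (Q-\beta_i)^{-1}\,H^{(2Q-\beta_1, \beta_2, \beta_3)}_{(\mu_1, \mu_2, \mu_3)}$. Integrating out the Poissonized construction in Definition~\ref{def-thin-disk} (first the bead PPP, then $T \sim (1 - \tfrac{2W_1}{\gamma^2})^{-2}\operatorname{Leb}_{\mathbb{R}_+}$) yields the thin disk factor
\[\Md_2(W_1)[e^{-A - \mu_1 \ell_\rmR - \mu_2 \ell_\rmL}] = \bigl[(1 - \tfrac{2W_1}{\gamma^2})^2\,\Md_2(\gamma^2 - W_1)[1 - e^{-A - \mu_1 \ell_\rmR - \mu_2 \ell_\rmL}]\bigr]^{-1}.\]

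Multiplying the three pieces, the target identity~\eqref{eq-QT-H} reduces to the boundary reflection identity
\[H^{(\beta_1, \beta_2, \beta_3)}_{(\mu_1, \mu_2, \mu_3)} = \frac{-1}{(1 - \tfrac{2W_1}{\gamma^2})\,\Md_2(\gamma^2 - W_1)[1 - e^{-A - \mu_1 \ell_\rmR - \mu_2 \ell_\rmL}]}\,H^{(2Q-\beta_1, \beta_2, \beta_3)}_{(\mu_1, \mu_2, \mu_3)},\]
valid for $\beta_1 > Q$. Establishing this reflection identity is the main obstacle. I would derive it from the Ponsot-Teschner integral formula for $H$ supplied by Proposition~\ref{prop-H-extend} and \cite[Theorem 1.1]{arsz-structure-constants}, combined with the shift identities~\eqref{eqn-gamma-shift} for $\Gamma_{\gamma/2}$, and matched against an explicit LCFT calculation of the two-point thick disk Laplace transform $\Md_2(\gamma^2 - W_1)[1 - e^{-A - \mu_1 \ell_\rmR - \mu_2 \ell_\rmL}]$ via the boundary two-point structure constant. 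Since both sides are meromorphic in $\beta_1$ by Proposition~\ref{prop-H-extend}, verification on a dense open set is enough. Once this reflection identity is in hand, the factors cancel as desired and~\eqref{eq-QT-H} holds in the thin case.
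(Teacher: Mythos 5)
Your proposal is correct and follows essentially the same route as the paper: the thick--thin factorization from Definition~\ref{def-qt-thin}, the Poissonian computation of the thin-disk Laplace transform (which is exactly the content of~\eqref{eq-R-thin} together with the definition of the reflection coefficient $R$), and the reduction to the identity $H^{(\beta_1,\beta_2,\beta_3)}_{(\mu_1,\mu_2,\mu_3)} = R(\beta_1,\sigma_1,\sigma_2)\, H^{(2Q-\beta_1,\beta_2,\beta_3)}_{(\mu_1,\mu_2,\mu_3)}$, which is precisely your displayed reflection identity since $1-\frac{2W_1}{\gamma^2}=\frac{2(\beta_1-Q)}{\gamma}$. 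The only divergence is that you flag this reflection identity as the main obstacle and propose to re-derive it from the Ponsot--Teschner formula, whereas the paper simply cites it (together with the extension $R(\beta,\sigma,\sigma')=1/R(2Q-\beta,\sigma,\sigma')$) from \cite[Equations (3.35)--(3.36)]{arsz-structure-constants}, so no new derivation is needed.
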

\begin{proof}
	Our argument depends on the \emph{reflection coefficient} $R$ defined as follows. Let $W(\beta) = \gamma(\gamma + \frac2\gamma - \beta)$.
	For $\sigma, \sigma' \in [ -\frac1{2\gamma} + \frac Q2, \frac1{2\gamma} + \frac Q2]$, following \cite[Lemma 2.16]{arsz-structure-constants} we set
	\[R(\beta, \sigma, \sigma') := \frac{2(Q-\beta)}\gamma \mathcal{M}_2^\textup{disk}(W(\beta))[e^{-A^D - \mu_B(\sigma)L^D - \mu_B(\sigma')R^D} - 1] \qquad \text{ for } \beta \in (\frac2\gamma, Q)\]
	where $A^D, L^D, R^D$ are the quantum area and  boundary arc lengths of the sampled quantum disk. We extend the definition to $\beta \in (Q, Q+\frac\gamma2)$ via $R(\beta, \sigma, \sigma') := 1/R(2Q - \beta, \sigma, \sigma')$ \cite[Equation (3.36)]{arsz-structure-constants}. Taking as input the above two definitions, the argument of \cite[Proposition 3.6]{AHS21} gives 
	\eqb
	\label{eq-R-thin}
	R(\beta, \sigma, \sigma') = \frac{2(Q-\beta)}\gamma \mathcal{M}_2^\textup{disk}(W(\beta))[e^{-A^D - \mu_B(\sigma)L^D - \mu_B(\sigma')R^D}] \qquad \text{ for } \beta \in (Q, Q+\frac\gamma2).
	\eqe
	
	We now turn to the main argument. 
	Let $\tilde W_1 = \gamma^2 - W_1$ and $\tilde \beta_1 = 2Q - \beta_1$, and let $\sigma_i \in [-\frac1{2\gamma} + \frac Q2, \frac1{2\gamma} + \frac Q2]$ satisfy $\mu_i = \mu_B(\sigma_i)$ for $i=1,2,3$. By Definition~\ref{def-qt-thin}
	\[\QT(W_1, W_2, W_3)[e^{-A - \sum_{i=1}^3 \mu_i L_i}] = (1 - \frac{2W_1}{\gamma^2}) \cdot \QT(\tilde W_1, W_2, W_3)[e^{-  A - \sum_{i=1}^3 \mu_i L_i}]\cdot \mathcal{M}_2^{\textup{disk}}(W_1)[e^{-A^D - \mu_1 L^D - \mu_2 R^D}].\]
	By Lemma~\ref{lem-H-thick} and~\eqref{eq-R-thin}, this expression equals 
	\[\frac2\gamma(\beta_1-Q) \cdot (Q-\tilde \beta_1)^{-1}(Q-\beta_2)^{-1}(Q-\beta_3)^{-1}  H
	\begin{pmatrix}
		\tilde \beta_1, \beta_2, \beta_3 \\
		\sigma_1,  \sigma_2,   \sigma_3
	\end{pmatrix} \cdot \frac\gamma{2(Q - \beta_1)}R(\beta_1, \sigma_1, \sigma_2)\]
	\cite[Equation (3.35)]{arsz-structure-constants} gives $H
	\begin{pmatrix}
		\tilde \beta_1, \beta_2, \beta_3 \\
		\sigma_1,  \sigma_2,   \sigma_3
	\end{pmatrix} R(\beta_1, \sigma_1, \sigma_2) = H
	\begin{pmatrix}
	\beta_1, \beta_2, \beta_3 \\
	\sigma_1,  \sigma_2,   \sigma_3
	\end{pmatrix}$, so this expression equals $\prod_{i=1}^3 (Q-\beta_i)^{-1} H
	\begin{pmatrix}
	\beta_1, \beta_2, \beta_3 \\
	\sigma_1,  \sigma_2,   \sigma_3
	\end{pmatrix}$ as desired. 
\end{proof}

\subsection{LCFT description of quantum disks and wedges}\label{sec:pre-lcft-disk}
The goal of this section is to give LCFT descriptions of quantum disks and quantum wedges which have an additional quantum typical boundary point (Propositions~\ref{prop-m2dot} and~\ref{prop:3ptwedge}).

We start with the following definition of the three-pointed quantum disks. % where we may first sample a surface from the quantum disk measure reweighted by the left/right boundary length, and then sample a third marked point on $\mathbb{R}$ from the quantum length measure. 
\begin{definition}\label{three-pointed-disk}
	Fix $W>0$. First sample a %weight $W$ 
    quantum disk from the weighted law
    $L\mathcal{M}_2^{\textup{disk}}(W)$, where $L$ is the quantum length of the left boundary arc of the quantum disk. Then sample a point on the left boundary arc from the probability measure proportional to the quantum length measure. We denote the law of this three-pointed quantum surface by $\mathcal{M}_{2, \bullet}^{\textup{disk}}(W)$.
\end{definition}

In~\cite{AHS21}, it is shown that three-pointed quantum disks can be described in terms of three-pointed Liouville fields. On the other hand, our quantum triangles are defined in terms of three-pointed Liouville fields as well. This leads to the following.
\begin{proposition}[Lemma 6.12 of~\cite{ASY22}]\label{prop-m2dot}
    There exists some constant $C\in(0,\infty)$ such that $\QT(W,W,2) = C\Md_{2,\bullet}(W)$.
\end{proposition}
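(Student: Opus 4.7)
The plan is to identify both $\QT(W,W,2)$ and $\Md_{2,\bullet}(W)$ as explicit constant multiples of the same Liouville field, and then compare.

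First I would treat the thick case $W>\gamma^2/2$. Set $\beta=\gamma+\frac{2-W}{\gamma}$, and observe that weight $2$ corresponds to the insertion parameter $\gamma$ (since $\gamma+\frac{2-2}{\gamma}=\gamma$). By Definition~\ref{def-qt-thick}, $\QT(W,W,2)$ is by construction equal to $\frac{1}{(Q-\beta)^2(Q-\gamma)}$ times the law of $(\mathcal{S},\phi,+\infty,-\infty,0)/\sim_\gamma$ with $\phi\sim\LF_{\mathcal{S}}^{(\beta,+\infty),(\beta,-\infty),(\gamma,0)}$. On the other side, the known LCFT description of two-pointed quantum disks (\cite[Theorem~2.22]{AHS21}) identifies $\Md_2(W)$ with a constant multiple of $\LF_{\mathcal{S}}^{(\beta,+\infty),(\beta,-\infty)}$ after integrating out a horizontal translation. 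Weighting by the left boundary length $L=\nu_\phi(\mathbb{R}+i\pi)$ and then adding a third marked point sampled from $\nu_\phi|_{\mathbb{R}+i\pi}/L$ amounts to integrating the intensity $\nu_\phi(dx)$ against $x\in\mathbb{R}+i\pi$. A Girsanov computation (or the Cameron-Martin-type identity used throughout \cite{AHS21}) converts this integration, followed by horizontal translation invariance, into a $\gamma$-insertion at $0$, giving $\Md_{2,\bullet}(W)$ as an explicit constant times $\LF_{\mathcal{S}}^{(\beta,+\infty),(\beta,-\infty),(\gamma,0)}$. Comparing the two expressions yields the proportionality with an explicit $C$.

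Next I would handle the thin case $W\in(0,\gamma^2/2)$ via the thick-thin duality built into Definitions~\ref{def-qt-thin} and~\ref{def-thin-disk}. On the $\QT$ side, a thin $\QT(W,W,2)$ is obtained from a thick $\QT(\gamma^2-W,\gamma^2-W,2)$ by independently concatenating samples from $(1-\frac{2W}{\gamma^2})\Md_2(W)$ at the first two marked points. On the $\Md_{2,\bullet}$ side, $\Md_2(W)$ is a Poisson chain of thick $\Md_2(\gamma^2-W)$ beads, weighted by $L$; conditionally on which bead contains the added typical boundary point, that bead becomes a thick three-pointed disk $\Md_{2,\bullet}(\gamma^2-W)$, while all other beads remain thick two-pointed disks of weight $\gamma^2-W$. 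Moreover, the total boundary weighting $L$ factorizes as the sum of the bead left-lengths so that a size-biased bead is selected. Combining the Poisson calculus for both constructions shows the two thin measures have the same underlying Poissonian structure of $\Md_2(W)$-beads attached to a central thick three-pointed surface, and the latter are related by the thick identity already established. This delivers the same constant $C$ as in the thick case.

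The main obstacle I anticipate is the precise bookkeeping of constants in the thick case, in particular tracking the factors coming from the Liouville zero-mode integration $e^{(\frac{2\beta+\gamma}{2}-Q)c}dc$ appearing in Definition~\ref{def-lf-strip} versus the $e^{(\beta-Q)c}dc$ used in the definition of $\Md_2(W)$, and reconciling them with the Girsanov shift produced by the $\gamma$-insertion. Once that is handled, the thin-case extension is essentially a Poissonian computation with no further integrability input.
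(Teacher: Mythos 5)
Your proposal is correct and takes essentially the same route as the paper, which simply cites \cite[Lemma 6.12]{ASY22} and whose stated justification is exactly your comparison: the \cite{AHS21} Liouville-field description of the length-weighted, quantum-typically marked disk versus the definition of $\QT(W,W,2)$ via the three-insertion field $\LF_{\mathcal S}^{(\beta,+\infty),(\beta,-\infty),(\gamma,0)}$, with the thin case handled by the bead/size-biasing decomposition of \cite{AHS20} (cf.~\eqref{eq-marked-decomp-1}). The only caveat is that your claim that the thin-case constant coincides with the thick-case one is neither needed nor automatic (factors such as $(1-\tfrac{2W}{\gamma^2})^2$ and the constant in the bead decomposition enter), but this does not affect the statement, which asserts only the existence of some $C=C(\gamma,W)\in(0,\infty)$.
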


On the other hand, the quantum wedges can similarly be described via the LCFT uniform embedding. We start from the three-pointed quantum wedge, which is the infinite measure obtained by adding a third point according to the quantum length measure.

\begin{definition}\label{def:3ptwedge-0}
    Let $W>0$, and consider a sample $((S,a_1,a_2),u)$ from  $\mathcal{M}_{2}^{\mathrm{wedge}}(W)\times \mathrm{Leb}_{\bbR_+}$, where $(a_1,a_2)$ are the two marked points of $S$, and  some neighborhood of $a_1$ has finite quantum area. We mark the point $a_3$ on the left boundary of $S$ such that the segment of the left boundary of $S$ between $a_1$ and $a_3$ has quantum  length $u$. We write  $\mathcal{M}_{2,\bullet}^{\mathrm{wedge}}(W)$ as the law of the marked quantum surface $(S,a_1,a_2,a_3)$. The point $a_1$ (resp.\ $a_2$) is called the finite (resp.\ infinite) endpoint of $S$, and the point $a_3$ is called the third marked point of $S$.
\end{definition}

The following is immediate from Definition~\ref{def:3ptwedge-0}.
\begin{lemma}\label{def:3ptwedge}
	Fix $W>\frac{\gamma^2}{2}$. Let $M^0$ be some probability measure on $H^{-1}(\mathcal{S}) $ such that for $\phi\sim M^0$, $(\mathcal{S}, \phi, +\infty, -\infty)/\sim_\gamma$ is a weight $W$ quantum wedge. Let $Q^0$ be the measure on $\{(\phi, u): \phi\in H^{-1}(\mathcal{S}),\ u \in\bbR\}$ such that for non-negative functions $f$ and $g$
	$$\int f(\phi)g(u)Q^0(d\phi,du) = \int f(\phi)\big(\int_\bbR g(u)\nu_\phi(du)\big)M^0(d\phi).$$
	Then   $\mathcal{M}_{2,\bullet}^{\mathrm{wedge}}(W)$ is the law of the marked quantum surface $(\mathcal{S}, \phi, +\infty, -\infty, u)/{\sim_\gamma}$.
\end{lemma}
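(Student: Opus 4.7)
The plan is to verify the asserted equality of laws via a change-of-variables argument relating Lebesgue measure on $\bbR_+$ to the quantum boundary length measure $\nu_\phi$ on the real axis $\bbR \subset \partial \mathcal{S}$. The key observation is that the Lebesgue parameter $u \in \bbR_+$ appearing in Definition~\ref{def:3ptwedge-0} is nothing but a parametrization of the left boundary arc by quantum length measured from the finite endpoint, and this reparametrization pushes Lebesgue measure forward to $\nu_\phi|_{\bbR}$.

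First I would identify $a_1 = +\infty$ as the finite endpoint in the embedding $(\mathcal{S}, \phi, +\infty, -\infty)$ when $\phi \sim M^0$: since $\beta = \gamma + \frac{2-W}{\gamma} < Q$, the Brownian drift of the $\psi_1$-component at $t \to +\infty$ equals $-(Q-\beta) < 0$, so the quantum area in a neighborhood of $+\infty$ is almost surely finite, while the conditioning at $t \to -\infty$ forces infinite quantum area in every neighborhood of $-\infty$. Consequently the component $\bbR \subset \partial \mathcal{S}$ plays the role of the left boundary arc joining $a_1$ to $a_2$ in the sense of Definition~\ref{def:3ptwedge-0}. Next, the map $F : \bbR \to \bbR_+$ defined by $F(u) := \nu_\phi([u, +\infty))$ is, with $M^0$-probability one, a strictly decreasing continuous bijection: $\nu_\phi|_\bbR$ is a diffuse Radon measure by standard Gaussian multiplicative chaos estimates for the wedge field, it assigns finite mass to every half-line $[u, +\infty)$ because $+\infty$ is the finite endpoint, and its total mass is $+\infty$ because $-\infty$ is the infinite endpoint. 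The pushforward of $\nu_\phi|_\bbR$ under $F$ is therefore exactly Lebesgue measure on $\bbR_+$.

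The lemma then follows essentially by reading off the definitions. Under $Q^0$, the conditional law of $u$ given $\phi$ is $\nu_\phi|_\bbR$, which under the bijection $F$ transports to sampling $\tilde u \in \bbR_+$ from $\mathrm{Leb}_{\bbR_+}$ and marking the unique boundary point at quantum length $\tilde u$ from $a_1$. Since by hypothesis $\phi \sim M^0$ induces a weight $W$ quantum wedge modulo $\sim_\gamma$, the pushforward of $Q^0$ under $(\phi, u) \mapsto (\mathcal{S}, \phi, +\infty, -\infty, u)/\sim_\gamma$ coincides with the marked-surface law in Definition~\ref{def:3ptwedge-0}. The main (essentially bookkeeping) obstacle is to confirm that $\bbR$ (as opposed to $\bbR + i\pi$) is the ``left boundary'' under the convention implicit in Definition~\ref{def:3ptwedge-0}; this is a consistency check of orientation conventions and requires no new ideas.
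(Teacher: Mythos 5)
Your argument is correct and is exactly the unwinding the paper intends: the paper states this lemma without proof as immediate from Definition~\ref{def:3ptwedge-0}, and your change of variables --- that the map $u\mapsto\nu_\phi([u,+\infty))$ pushes $\nu_\phi|_{\bbR}$ forward to $\mathrm{Leb}_{\bbR_+}$, so marking a $\nu_\phi$-sampled point is the same as marking the point at a Lebesgue-sampled quantum length from the finite endpoint --- is precisely that immediate step. Your identification of $+\infty$ as the finite endpoint and of $\bbR$ as the boundary arc carrying the third point agrees with the conventions used elsewhere in the paper (e.g.\ Propositions~\ref{thm-lcft-wedge} and~\ref{prop:3ptwedge}), so the proposal matches the paper's (implicit) proof.
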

%In other words, $\mathcal{M}_{2,\bullet}^{\mathrm{wedge}}(W)$ is the infinite measure defining  the law of the marked quantum surface given by sampling a marked point on the left (or right) boundary of a weight $W$ quantum wedge according to the quantum length measure.

\begin{proposition}\label{prop:3ptwedge}
	For $W>\frac{\gamma^2}{2}$,  $\beta = \gamma+\frac{2-W}{\gamma}$, let $\phi$ be sampled from  $\frac{1}{Q-\beta}\textup{LF}_\mathcal{S}^{(2Q-\beta, -\infty), (\beta, +\infty), (\gamma,0)}$.  Then  $(\mathcal{S}, \phi, +\infty, -\infty, 0)/{\sim_\gamma}$ is a sample from  $\mathcal{M}_{2,\bullet}^{\mathrm{wedge}}(W)$.
\end{proposition}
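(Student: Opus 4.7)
The plan is to prove the proposition by combining a two-pointed wedge--to--Liouville-field identity with the standard root identity of Liouville CFT, paralleling the argument for the disk analog stated in Proposition~\ref{prop-m2dot}.

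The first ingredient I would establish is the following two-pointed identity: if $\phi_0 \sim \mathcal{M}_2^{\mathrm{wedge}}(W)$ and $T \sim \mathrm{Leb}_{\bbR}$ are sampled independently, then the law of $\phi_0(\cdot+T)$ equals $\frac{1}{Q-\beta}\,\LF_\mathcal{S}^{(\beta,+\infty),(2Q-\beta,-\infty)}$ as $\sigma$-finite measures on fields. This is the wedge analog of the disk identity \cite[Theorem 2.22]{AHS21}; it follows by comparing radial--lateral decompositions on both sides. The lateral parts both have the law of $h_\mathcal{S}^2$; for the radial parts, translated by the Lebesgue variable $T$, the wedge's process $X_\cdot$ has the same law as the unconditioned linear-drift BM $B_{2t}+(\beta-Q)t+\mathbf{c}$ (with $\mathbf{c}$ from Lebesgue) that describes the LF radial, up to the constant $\frac{1}{Q-\beta}$ which comes from a Doob $h$-transform Radon--Nikodym factor. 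The second ingredient is the standard LCFT root identity (see e.g.\ \cite[Proposition 2.26]{AHS21}):
\[
\LF_\mathcal{S}^{(\beta,+\infty),(2Q-\beta,-\infty)}(d\phi)\otimes\nu_\phi(du) \;=\; du\otimes \LF_\mathcal{S}^{(\beta,+\infty),(2Q-\beta,-\infty),(\gamma,u)}(d\phi),
\]
which follows by Girsanov from the observation that inserting a $\gamma$-insertion at $u$ amounts to the Gaussian multiplicative chaos reweighting $\nu_\phi(du)$.

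To combine these, I consider the joint measure $\mathcal{M}_2^{\mathrm{wedge}}(d\phi_0)\otimes\nu_{\phi_0}(du)\otimes dT$ on $(\phi_0,u,T)$ and perform the bijective change of variables $(\phi_0,u,T)\leftrightarrow(\phi',u',T)$ given by $\phi'(\cdot):=\phi_0(\cdot+T)$ and $u':=u-T$. Under this substitution one has $\phi_0(\cdot+u)=\phi'(\cdot+u')$, and the translation covariance of quantum length gives $\nu_{\phi_0}(du)=\nu_{\phi'}(du')$. Applying the two-pointed identity to the $(\phi_0,T)\to\phi'$ projection, then the root identity to the $(\phi',u')$ pair, and finally translating by $u'$ (using LF translation invariance) to bring the $\gamma$-insertion from $u'$ to $0$, the auxiliary Lebesgue factors $dT$ and $du'$ cancel against each other by the translation redundancy. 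The surviving measure on $\tilde\phi:=\phi_0(\cdot+u)=\phi'(\cdot+u')$ is $\frac{1}{Q-\beta}\LF_\mathcal{S}^{(2Q-\beta,-\infty),(\beta,+\infty),(\gamma,0)}$, which is the claim.

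The main technical obstacle is the two-pointed identity: determining the precise constant $\frac{1}{Q-\beta}$ requires a careful comparison of the one-sided conditioned Brownian motion in the wedge's radial part with the unconditioned linear-drift BM under a Lebesgue random translation, and is slightly more delicate than the two-sided conditioning of the disk analog in \cite[Theorem 2.22]{AHS21}. Once this identity is established, the remaining bookkeeping for $\sigma$-finite cancellation of the auxiliary Lebesgue factors in the combination step is standard.
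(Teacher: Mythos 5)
Your proposal is correct and essentially matches the paper's proof: the paper likewise reduces the statement to the two-pointed uniform-embedding identity for the wedge (its Proposition~\ref{thm-lcft-wedge}, cited from~\cite{ang-zipper} rather than reproved via the radial--lateral comparison you sketch) and then adds the quantum-typical boundary point as a $\gamma$-insertion by exactly the Girsanov/GMC computation that your ``root identity'' packages. The only difference is bookkeeping: the paper uses the $du$ coming from the $\varepsilon$-approximation of $\nu_\phi$ itself as the Lebesgue translation variable, so no auxiliary $T$ and no cancellation of infinite factors is needed, whereas your version also works provided you note that after the change of variables both sides are product measures with a common Lebesgue factor in $u'$ (so one may restrict $u'$ to a unit window rather than ``cancel'' infinities), and that the exact translation invariance you invoke holds here with no conformal-covariance factor precisely because $\Delta_\beta=\Delta_{2Q-\beta}$.
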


The following statement is immediate from \cite[Proposition 5.1]{ang-zipper} by transforming from $\bbH$ to $\mathcal{S}$.  {It states that when a quantum wedge is embedded in $(\mathcal S, +\infty, -\infty)$ with embedding chosen uniformly at random, the resulting field is a Liouville field.}
\begin{proposition}\label{thm-lcft-wedge}
	Fix $W>\frac{\gamma^2}{2}$, $\beta = \gamma+\frac{2}{\gamma}-\frac{W}{\gamma}$ and let $(\mathcal{S}, \phi, +\infty, -\infty)$ be a sample from $\mathcal{M}_2^{\mathrm{wedge}}(W)$. Let $T\in\bbR$ be sampled independently from the Lebesgue measure $dt$ and set $\tilde{\phi}(z) = \phi(z-T)$. Then the law of $\tilde{\phi}$ is given by $\frac{1}{Q-\beta}\mathrm{LF}_\mathcal{S}^{(2Q-\beta, -\infty), (\beta, +\infty)}$. 
\end{proposition}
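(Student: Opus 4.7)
My plan is to combine Proposition~\ref{thm-lcft-wedge} (which identifies the uniformly translated quantum wedge with a two-point Liouville field) with a Girsanov-type rooting argument that encodes the quantum-typical marked point as a $\gamma$-primary insertion, paralleling the proof of the disk analog in~\cite[Proposition 2.28]{AHS21}.

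By Lemma~\ref{def:3ptwedge}, $\mathcal{M}_{2,\bullet}^{\mathrm{wedge}}(W)$ is the pushforward of $\mathcal{M}_2^{\mathrm{wedge}}(W)(d\phi_0)\,\nu_{\phi_0}(du)$ to three-marked quantum surfaces. I introduce an independent $T \sim dT$ and change variables to $\psi(z) := \phi_0(z - T)$, $v := u + T$. Under this bijection, $\nu_{\phi_0}(du) = \nu_\psi(dv)$, the quantum surfaces $(\mathcal{S}, \phi_0, +\infty, -\infty, u)/{\sim_\gamma} = (\mathcal{S}, \psi, +\infty, -\infty, v)/{\sim_\gamma}$ agree, and by Proposition~\ref{thm-lcft-wedge} the marginal law of $\psi$ is $\frac{1}{Q-\beta}\mathrm{LF}_\mathcal{S}^{(2Q-\beta, -\infty), (\beta, +\infty)}$. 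Hence the measure $\frac{1}{Q-\beta}\mathrm{LF}_\mathcal{S}^{(2Q-\beta, -\infty), (\beta, +\infty)}(d\psi)\,\nu_\psi(dv)$ on $(\psi, v)$ induces the three-marked quantum surface law $\mathcal{M}_{2,\bullet}^{\mathrm{wedge}}(W)$ (up to the Lebesgue direction inherited from $T$).

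Next, writing $\psi = h + V + \mathbf{c}$ with $h \sim P_\mathcal{S}$, $V(z) = (\beta-Q)\mathrm{Re}\,z$, and $\mathbf{c} \sim dc$ (since $\beta + (2Q-\beta) - 2Q = 0$), the standard GMC Cameron-Martin computation yields the rooting identity
\[\nu_\psi(dv)\,\mathrm{LF}_\mathcal{S}^{(2Q-\beta, -\infty), (\beta, +\infty)}(d\psi) = e^{\gamma(\beta-Q)v/2}\,\mathrm{LF}_\mathcal{S}^{(2Q-\beta, -\infty), (\beta, +\infty), (\gamma, v)}(d\psi)\,dv,\]
where the Girsanov shift adds $\frac{\gamma}{2}G_\mathcal{S}(\cdot, v)$ to the Gaussian part, the zero-mode prior upgrades from $dc$ to $e^{\gamma c/2}dc$ (matching the three-point LF), and the factor $e^{\gamma(\beta-Q)v/2}$ arises from evaluating the drift $V$ at the rooting point. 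Finally, to bring the marked point to $0$, I apply the conformal automorphism $z \mapsto z-v$ of $\mathcal{S}$; setting $\tilde\phi(z) := \psi(z+v)$, the quantum surface is preserved: $(\mathcal{S}, \psi, +\infty, -\infty, v)/{\sim_\gamma} = (\mathcal{S}, \tilde\phi, +\infty, -\infty, 0)/{\sim_\gamma}$. The conclusion is that, after integrating over $v$ and matching the translation-induced changes against the Girsanov weight, the law of $\tilde\phi$ is $\frac{1}{Q-\beta}\mathrm{LF}_\mathcal{S}^{(2Q-\beta, -\infty), (\beta, +\infty), (\gamma, 0)}$.

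The main obstacle is the translation step. The Liouville field on the strip is \emph{not} translation invariant: the GFF is normalized with mean zero on $\{0\} \times [0,\pi]$, and the Green's function from~\eqref{eqn-gff-cov} contains the boundary term $2\max\{\mathrm{Re}\,z, 0\}$ which is piecewise linear rather than constant. Consequently, the translated field differs from a direct $\mathrm{LF}^{(\cdots), (\gamma, 0)}$ sample by a random Gaussian shift $h^1_\mathcal{S}(v) \sim B_{2v}$ (arising from the radial-lateral decomposition~\eqref{eqn-gff-decom}) and a deterministic piecewise-linear correction $\frac{\gamma}{2}[G_\mathcal{S}(\cdot+v, v) - G_\mathcal{S}(\cdot, 0)]$. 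Tracking the cancellation between these translation-induced terms, the Girsanov weight $e^{\gamma(\beta-Q)v/2}$, and the change in the zero-mode prior at the level of the quantum surface is the technical core of the argument, mirroring the computation carried out in~\cite[Proposition 2.28]{AHS21} for quantum disks, with adjustments reflecting the different zero-mode prior in the wedge case.
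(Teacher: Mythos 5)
Your proposal does not prove the statement at hand: the statement to be proven \emph{is} Proposition~\ref{thm-lcft-wedge} (the uniformly translated weight-$W$ wedge field has law $\frac{1}{Q-\beta}\mathrm{LF}_\mathcal{S}^{(2Q-\beta,-\infty),(\beta,+\infty)}$), but your argument explicitly invokes this very proposition as an input ("by Proposition~\ref{thm-lcft-wedge} the marginal law of $\psi$ is $\frac{1}{Q-\beta}\mathrm{LF}_\mathcal{S}^{(2Q-\beta,-\infty),(\beta,+\infty)}$") and then concludes with the three-point field $\frac{1}{Q-\beta}\mathrm{LF}_\mathcal{S}^{(2Q-\beta,-\infty),(\beta,+\infty),(\gamma,0)}$. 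What you have sketched is in substance the paper's proof of Proposition~\ref{prop:3ptwedge} (the analogue of \cite[Proposition 2.28]{AHS21}, via Lemma~\ref{def:3ptwedge}, Girsanov rooting at a quantum-typical boundary point, and recentering), which is a downstream result. As a proof of Proposition~\ref{thm-lcft-wedge} it is circular and addresses the wrong target; even your "main obstacle" paragraph concerns the rooting/recentering step of the three-point statement rather than anything needed for the two-point one.

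What the target actually requires is not touched by your argument: one must show that the \emph{uniform horizontal translation} of the wedge field produces exactly the zero-mode structure of Definition~\ref{def-lf-strip} with $\beta_1=\beta$, $\beta_2=2Q-\beta$, $\beta_3=0$ (whose prior on $\mathbf{c}$ is $e^{(\frac{\beta+(2Q-\beta)}{2}-Q)c}dc=dc$). Concretely, in the radial-lateral decomposition the lateral part $h^2_\mathcal{S}$ is translation invariant, so the entire content is a one-dimensional fact about the radial process: translating the process which equals $B_{2t}-(Q-\beta)t$ for $t\ge 0$ and a conditioned-negative drifted Brownian motion for $t<0$ by an independent Lebesgue-distributed shift $T$ yields, in law, a two-sided drifted Brownian motion $B_{2t}-(Q-\beta)t+\mathbf{c}$ with $\mathbf{c}$ sampled from $\frac{1}{Q-\beta}dc$; this Bessel-type shift identity (equivalently, the half-plane statement of \cite[Proposition 5.1]{ang-zipper}, which the paper simply cites and transfers from $\bbH$ to $\mathcal{S}$) is where the prefactor $\frac{1}{Q-\beta}$ and the Lebesgue zero mode come from. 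Without establishing that step independently, your write-up cannot stand as a proof of Proposition~\ref{thm-lcft-wedge}.
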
	

    \iffalse
\begin{proof}
	First recall that from Definition \ref{def-lf-strip}, a sample from LF$_\mathcal{S}^{(2Q-\beta, -\infty), (\beta, +\infty)}$ is $\varphi(z) = h(z)+(\beta-Q){\text{Re} z}+\textbf{c}$ where $(h, c)\sim \mathcal{P}_{\mathcal{S}}\times dc$.
	
	Now we define $(Y_t)$, $(X_1(t))$ and $(X_2(t))$ as in Proposition \ref{prop-br-shift} with $a = \frac{Q-\beta}{\sqrt{2}}$. From definition, we can decompose $\phi = h^1 + h^2$, and the law of $\{h^1(z)\}_{z\in\mathcal{S}}$ is the same as $\sqrt{2}Y_{\text{Re} z}$ and hence $\{h^1(z-T)\}_{z\in\mathcal{S}}$ has the same law as $\sqrt{2}X_1({\text{Re} z})$. Using Proposition \ref{prop-br-shift}, we see that the projection of $\tilde{\phi}$ onto $H^1(\mathcal{S})$ is given by $\sqrt{2}X_2({\text{Re} z}) = \sqrt{2}B_{t} - (Q-\beta)t+ \sqrt{2}\textbf{c}$ with $\textbf{c}$ sampled independently from $\frac{\sqrt{2}}{Q-\beta}dc$ and $t = {\text{Re} z}$. On the other hand, using the translation invariance of $h^2$, we finally see that $\tilde{\phi}$ has the same law as $h - (Q-\beta){\text{Re}( \cdot)} + \sqrt{2}\textbf{c}$, which is the same as $\frac{1}{Q-\beta}$LF$_\mathcal{S}^{(2Q-\beta, -\infty), (\beta, +\infty)}$. 
\end{proof}	
\fi

\begin{proof}[Proof of Proposition \ref{prop:3ptwedge}]
    	In the proof below, for $\phi\in H^{-1}(\mathcal{S})$, we shall write $(\phi,u)$ for the marked quantum surface $(\mathcal{S}, \phi, +\infty, -\infty, u)/{\sim_\gamma}    $. Let $M^0$ and $Q^0$ be as in Lemma~\ref{def:3ptwedge}. Suppose $\tilde{F}$ is a non-negative functional over $\{(\phi,u): \phi\in H^{-1}(\mathcal{S}),\ u \in\bbR\}$. Then by definition of the equivalence relation $\sim_\gamma$, we can find a functional $F$ on $H^{-1}(\mathcal{S})$ such that $\tilde{F}(\phi, u) = F(\phi(\cdot + u))$ as we can shift to make our marked point located at 0. Then we for $\tilde{\phi} = \phi(\cdot+u)$
	\begin{equation}
	\begin{split}
	\int \tilde{F}(\phi, u)Q^0(d\phi, du)&= \lim_{\e\to 0}\int\int_\bbR F(\phi(\cdot+u))\e^{\frac{\gamma^2}{4}}e^{\frac{\gamma}{2}\phi_\e(u)}duM^0(d\phi)\\
	&= \lim_{\e\to 0}\int\int_\bbR F(\tilde{\phi})\e^{\frac{\gamma^2}{4}} e^{\frac{\gamma}{2}\tilde{\phi}_\e(0)}duM^0(d\phi)\\
	&= \lim_{\e\to 0}\int F(\phi) \e^{\frac{\gamma^2}{4}} e^{\frac{\gamma}{2}{\phi}_\e(0)} \frac{1}{Q-\beta}\text{LF}_\mathcal{S}^{(2Q-\beta, -\infty), (\beta, +\infty)}(d\phi)\\
	&= \frac{1}{Q-\beta}\lim_{\e\to 0}\int_\bbR\int F(h -(Q- \beta)\text{Re}\cdot + c ) \e^{\frac{\gamma^2}{4}} e^{\frac{\gamma}{2}{h}_\e(0)+\frac{\gamma}{2}c} \mathcal{P}_{\mathcal{S}}(dh)dc.
	\end{split}
	\end{equation}
	where in the third line we have used Proposition~\ref{thm-lcft-wedge} that the law of $\tilde{\phi}$ under $du M^0(d\phi)$ is the same as $\frac{1}{Q-\beta}\text{LF}_\mathcal{S}^{(2Q-\beta, -\infty), (\beta, +\infty)}(d\phi)$. Now by the Girsanov's theorem, for fixed $c\in\bbR$, 
	\begin{equation}
	\int F(h -(Q- \beta)\text{Re}\cdot + c ) \e^{\frac{\gamma^2}{4}} e^{\frac{\gamma}{2}{h}_\e(0)+\frac{\gamma}{2}c} \mathcal{P}_{\mathcal{S}}(dh) = \int F\big(h +\frac{\gamma}{2}G_{\mathcal{S}, \e}(\cdot, 0)- (Q- \beta)\text{Re}\cdot + c\big)\e^{\frac{\gamma^2}{4}}\mathcal{P}_{\mathcal{S}}(dh)\bbE[ e^{\frac{\gamma}{2}{h}_\e(0)}]
	\end{equation}
	where $G_{\mathcal{S}, \e}(z, u) = \bbE [h(z)h_\e(u)]\to G_{\mathcal{S}}(z, u)$ as $\e \to0$. Since $\var(h_\e(0)) = -2\log\e + o(1)$, it follows that 
	\begin{equation}
	\begin{split}
	&\lim_{\e\to 0}\int_\bbR\int F(h -(Q- \beta)\text{Re}\cdot + c ) \e^{\frac{\gamma^2}{4}} e^{\frac{\gamma}{2}{h}_\e(0)+\frac{\gamma}{2}c} \mathcal{P}_{\mathcal{S}}(dh)dc\\
	& = \int_\bbR\int F\big(h+\frac{\gamma}{2}G_{\mathcal{S}}(\cdot, 0)-(Q- \beta)\text{Re}\cdot + c\big)e^{\frac{\gamma}{2}c}\mathcal{P}_{\mathcal{S}}(dh)dc.
	\end{split}
	\end{equation}
	Note that by definition, a sample from LF$_\mathcal{S}^{(2Q-\beta, -\infty), (\beta, +\infty), (\gamma,0)}$ is obtained precisely by first taking $(h,\textbf{c})$ from $\mathcal{P}_{\mathcal{S}}\times e^{\frac{\gamma}{2}c}dc$ and then set $\phi = h+\frac{\gamma}{2}G_{\mathcal{S}}(\cdot, 0)-(Q- \beta)\text{Re}\cdot + c$. Therefore we conclude that when viewed as marked quantum surface, 
	$$
	\int \tilde{F}(\phi, u)Q^0(d\phi, du) = \frac{1}{Q-\beta}\int   \tilde{F}(\phi, 0)\text{LF}_\mathcal{S}^{(2Q-\beta, -\infty), (\beta, +\infty), (\gamma,0)}(d\phi)
	$$
	which justifies the proposition.
\end{proof}

\subsection{Imaginary geometry flow lines}\label{sec:pre-ig}

In this section we briefly go over the GFF/SLE coupling as constructed in imaginary geometry \cite{MS16a,MS17}. Let $\kappa\in(0,4)$.  Heuristically, given a GFF $h$, a curve $\eta(t)$ is a flow line of angle $\theta$ if
\[ 
\eta'(t) = e^{i(\frac{h(\eta(t))}{\chi}+\theta)}\ \text{for}\ t>0, \ \text{where}\ \chi = \frac{2}{\sqrt{\kappa}}-\frac{\sqrt{\kappa}}{2}.
\]
Similarly to quantum surfaces~\eqref{eqn-qs-relation}, define an \textit{imaginary surface} to be an equivalence class of pairs $(D, h)$ where $(D,h) \sim (\tilde D, \tilde h)$ if there is a conformal map $\psi: D \to \tilde D$ and choice of branch cut for $\arg$ such that 
\eqb\label{eq-ig-surface}
\tilde h = h \circ \psi^{-1} - \chi \arg ((\psi^{-1})').
\eqe

We now introduce the SLE$_\kappa(\underline{\rho})$ curves; as we discuss later, these will arise as flow lines of the GFF. 
%Flow lines of the GFF, as constructed in \cite{MS16a,MS17}, are SLE$_\kappa(\underline{\rho})$ curves which we briefly recap below. 
Fix  $x^{k,L}<...<x^{1,L}\le 0^-\le 0^+\le x^{1,R}<...<x^{\ell, R}$, which are called \textit{force points}, and set $\underline{x} = (\underline{x}_L, \underline{x}_R) = (x^{1,L}, ..., x^{k,L};x^{1,R}, ..., x^{\ell, R})$. For each for each force point $x^{i,q}$, $q\in\{L,R\}$ we assign a \textit{weight} $\rho^{i,q}\in\mathbb{R}$. Let $\underline{\rho}$ be the vector of weights, and  we shall work on the upper half plane. The SLE$_\kappa(\underline{\rho})$ process is a measure on continuously growing compact hulls $K_t$ with the  Loewner driving function $(W_t)_{t\ge 0}$ characterized by 
\begin{equation}\label{eqn-def-sle-rho}
\begin{split}
&W_t = \sqrt{\kappa}B_t+\sum_{q\in\{L,R\}}\sum_i \int_0^t \frac{\rho^{i,q}}{W_s-V_s^{i,q}}ds; \\
& V_t^{i,q} = x^{i,q}+\int_0^t \frac{2}{V_s^{i,q}-W_s}ds, \ q\in\{L,R\},
\end{split}
\end{equation}
while $g_t$ is the unique conformal transformation from $\mathbb{H}\backslash K_t$ to $\mathbb{H}$ such that $\lim_{|z|\to\infty}|g_t(z)-z|=0$ with
\begin{equation}\label{eqn-def-sle}
g_t(z) = z+\int_0^t \frac{2}{g_s(z)-W_s}ds, \ z\in\mathbb{H}.
\end{equation} 
It has been shown in \cite{MS16a} that  SLE$_\kappa(\underline{\rho})$ processes a.s.\ exists, is unique and generates a continuous curve until the \textit{continuation threshold}, the first time $t$ such that $W_t = V_t^{j,q}$ with $\sum_{i=1}^j\rho^{i,q}\le -2$ for some $j$ and $q\in\{L,R\}$.
% Let $f_t := g_t-W_t$ be the {centered Loewner flow.}

% Now we are ready to state the SLE and GFF coupling result \cite[Theorem 1.1]{MS16a}. Set $\lambda = \frac{\pi}{\sqrt{\kappa}}$.

% \begin{theorem}\label{thm-ig1}
% 	Fix $\kappa>0$, a vector $\underline{\rho}$ of weights and a vector $\underline{x}$ of force points. Let $(K_t)_{t\ge 0}$ be the hull at time $t$ of the SLE$_\kappa(\underline{\rho})$ process described by the Loewner flow \eqref{eqn-def-sle} with $(W_t, V_t^{i,q})$ solving \eqref{eqn-def-sle-rho}. Let $\mathfrak{h}_t^0$ be the harmonic function on $\mathbb{H}$ with boundary values 
% 	$$
% 	-\lambda(1+\sum_{i=0}^j \rho^{i,L})\ \ \text{on} \ [V_t^{j+1, L}, V_t^{j,L})\ \  \text{and}\ \ \lambda(1+\sum_{i=0}^j \rho^{i,R})\ \text{on}\ \ [V_t^{j, R}, V_t^{j+1,R})
% 	$$
% 	where $\rho^{0,R} = \rho^{0,L}=0$, $x^{0,L} = 0^-, x^{0,R} = 0^+, x^{k+1, L} = -\infty, x^{\ell+1, R} = +\infty$. Set $\mathfrak{h}_t(z) = \mathfrak{h}_t^0(f_t(z))-\chi\arg f_t'(z)$. Let $\mathcal{F}_t$ be the filtration generated by $(W, V^{i,q})$. Then there exists a coupling $(K,h)$ where $h = \tilde{h}+ \mathfrak{h}_0$ with $\tilde{h}$ being a zero boundary GFF on $\mathbb{H}$ such that the following is true. For any $\mathcal{F}_t$-stopping time $\tau$ before the continuation threshold , $K_\tau$ is a local set for $h$ and the conditional law of $h|_{\mathbb{H}\backslash K_\tau}$ given $\mathcal{F}_\tau$ is the same as the law of $\mathfrak{h}_\tau+\tilde{h}\circ f_\tau$.
% \end{theorem}
 {Theorems 1.1 and 1.2 of \cite{MS16a} introduce a coupling of a GFF $h$ in $\bbH$ and an SLE$_\kappa(\underline \rho)$ curve $\eta$ where $\eta$ is measurable with respect to $h$; the curve $\eta$ is called the flow line of $h$. A curve $\eta$ is called a flow line of angle $\theta$ if it is the flow line of $h + \theta\chi$, and we can simultaneously consider flow lines starting from different boundary points and having different angles. Flow lines of a GFF in general simply-connected domains with boundary are defined by uniformizing in $\bbH$ via~\eqref{eq-ig-surface}.}
%The SLE$_\kappa(\underline{\rho})$ coupled with the GFF $h$ as in \cite[Theorem 1.1]{MS16a} is referred to as the \textit{flow line} of $h$, and we say an SLE$_\kappa(\underline{\rho})$ curve is a flow line of angle $\theta$ if it can be coupled with $h+\theta\chi$. Moreover, \cite[Theorem 1.2]{MS16a} shows that these flow lines are a.s. determined by the GFF $h$, and we can simultaneously consider flow lines starting from different boundary points. 

By \cite[Theorem 1.5]{MS16a}, the interactions of  flow lines (i.e., crossing, merging, etc.) are completely determined by their angles. One important consequence is that, as argued in \cite[Section 6]{MS16a}, if $\eta_1$ and $\eta_2$ are flow lines of $h$ with angles $\theta_1$ and $\theta_2$, then given $\eta_1$, the conditional law of $\eta_2$ is the same as the law of the flow line (with angle $\theta_2$) of the GFF in $\mathbb{H}\backslash\eta_1$ with the \textit{flow line boundary conditions}.  This will be applied later when determining the conditional law of SLE curves.

One can also make sense of flow lines of a whole plane  GFF $\wh h$ started from interior points as in \cite[Theorem 1.1]{MS17}; these flow lines are whole plane $\SLE_\kappa(2-\kappa)$ processes. Moreover, flow lines of $\wh h$ of the same angle $\theta_0$ started from different points a.s.\ merge when they intersect  \cite[Theorem 1.9]{MS17}. This gives an ordering of $\mathbb{Q}^2$, where $z\lesssim w$ whenever the $\theta_0$-angle flow line from $z$ merges into the $\theta_0$-angle flow line from $w$ on the left side.  One can construct a
unique Peano curve $\eta'$ which visits points of $\mathbb Q^2$ with respect to this ordering~\cite[Theorem 1.16]{MS17}. We will work with the case $\theta_0=\frac{\pi}{2}$, for which  $\eta'$ is called the north-going space-filling counterflowline of $\wh h$. It  has the law as the space-filling $\SLE_{16/\kappa}$ in $\mathbb C$ from $\infty$ to $\infty$.

%For $\kappa>4$, then the SLE$_\kappa(\underline{\rho})$ coupled with the GFF $h$ is referred as \textit{counterflow lines} of $h$. Although $\SLE_\kappa$ processes are not space-filling for $\kappa\in (4,8)$, one can construct the space-filling version, which is the angle $\theta-\frac{\pi}{2}$ counterflow line, via the merging of angle $\theta$ flow lines. Moreover, from the SLE duality argument \cite{MS17}, the left and right boundary of the angle $\theta$ space-filling counterflow line are angle  $\theta+\frac{\pi}{2}$ and $\theta-\frac{\pi}{2}$ flow lines.

\section{Conformal welding of quantum triangle with  quantum wedge}\label{sec:conf} As pioneered by~\cite{She16a,DMS14}, for certain pairs of quantum surfaces, there exists a way to \emph{conformally weld} them according to their quantum boundary length measures provided that these boundary lengths agree; see e.g.~\cite[Section 4.1]{AHS21} and~\cite[Section 4.1]{ASY22} for details. Let $\mathcal{M}^1, \mathcal{M}^2$ be measures on the space of   quantum surfaces with boundary marked points. For $i=1,2$, fix a boundary arc $e_i$ of finite quantum length on a sample from $\mathcal{M}^i$, and define the measure $\mathcal{M}^i(\ell_i)$ via the disintegration 
\eqb\label{eq:weld-def}\mathcal{M}^i = \int_0^\infty \mathcal{M}^i(\ell_i)d\ell_i\eqe
as in Section~\ref{sec:lcft-qt}. For $\ell>0$, given a pair of surfaces sampled from the product measure $\mathcal{M}^1(\ell)\times \mathcal{M}^2(\ell)$, we can conformally weld them together according to  quantum length. This yields a single quantum surface decorated  by a curve, namely, the welding interface. We write $\text{Weld}( \mathcal{M}^1(\ell), \mathcal{M}^2(\ell))$ for the law of the resulting curve-decorated surface, and let $$\text{Weld}(\mathcal{M}^1, \mathcal{M}^2):=\int_{\bbR}\, \text{Weld}(  \mathcal{M}^1(\ell), \mathcal{M}^2(\ell))\,d\ell$$ be the  welding of $\mathcal{M}^1,  \mathcal{M}^2$ along the boundary arcs $e_1$ and $e_2$. If $e_1$ and $e_2$ both have infinite quantum length, then we let $\text{Weld}(\mathcal{M}^1, \mathcal{M}^2)$ be the welding of the two surfaces along the boundary arcs $e_1$ and $e_2$. If $e_1$ has finite quantum length and $e_2$ has infinite quantum length, then we add a marked point on $e_2$ dividing $e_2$ into two arcs such that the one subarc $e_2'$ with finite length has the same quantum length as $e_1$, and define $\text{Weld}(\mathcal{M}^1, \mathcal{M}^2)$ to be the welding of the two surfaces along the boundary arcs $e_1$ and $e_2'$.

The aim of this section is to prove Theorem~\ref{thm:weld-0} below, see Figure~\ref{fig:pthetaweld}. {Recall from Definition~\ref{def:3ptwedge-0} that a sample $(D, h, a_1, a_2, a_3)$ from  $\mathcal M_{2, \bullet}^\textup{wedge}(W)$ is a three-pointed quantum surface, where $a_1$ and $a_2$ are the marked points of the quantum wedge with finite and infinite neighborhoods respectively, and $a_3$ was added according to the quantum boundary length measure. For $W \geq \frac{\gamma^2}2$, let $\mathcal{M}_{2,\bullet}^{\mathrm{wedge}}(W)\otimes \SLE_\kappa(W-2;-W,W-2)$ denote the law of the curve-decorated quantum surface obtained by drawing an independent $\SLE_\kappa(W-2;-W,W-2)$ curve on top of a sample $(D, h, a_1, a_2, a_3)$ from $\mathcal{M}_{2,\bullet}^{\mathrm{wedge}}(W)$; the curve starts from $a_3$ and targets $a_2$,  its first two force points (with weights $W-2$ and $-W$) are located immediately to the left and right hand side of $a_3$, and the third force point (with weight $W-2$) is located at $a_1$. For $W \in (0,\frac{\gamma^2}2)$, the same definition applies except that the curve lies within the connected component $\tilde D$ of $D$ containing $a_3$, and $a_1, a_2$ are replaced by the points $\tilde a_1, \tilde a_2 \in \partial \tilde D$ connecting $\tilde D$ to $a_1$ and $a_2$ respectively. In both cases, since $-W+(W-2)=-2$, the curve reaches its continuation threshold (merges with the boundary) before hitting its target.}

\begin{theorem}\label{thm:weld-0}
    Let $\gamma\in(0,2)$ and $\kappa=\gamma^2$. Let $W\in (0,2-\frac{\gamma^2}{2})\backslash\{\frac{\gamma^2}{2}\}$. Then there exists a constant $C:=C(\gamma,W)$ such that
\eqb\label{eq:thm-weld-0}\mathcal{M}_{2,\bullet}^{\mathrm{wedge}}(W)\otimes \SLE_\kappa(W-2;-W,W-2) = C\, \Wd(\mathcal{M}_{2}^{\mathrm{wedge}}(W),\QT(2-W,W,\gamma^2)).\eqe
 %Here the left hand side of~\eqref{eq:thm-weld-0} stands for drawing an independent $\SLE_\kappa(W-2;-W,W-2)$ curve on top of a three-pointed  quantum wedge from $\mathcal{M}_{2,\bullet}^{\mathrm{wedge}}(W)$, starting from its third marked point and targeting  its %infinite 
 %\mo{second} marked point;   the first two force points (with weight $W-2$ and $-W$) are located immediately to the left and right hand side of the root of the interface, and the third force point (with weight $W-2$) is located at the \mo{first marked point.} %finite endpoint of the quantum wedge. 
 %If $W<\frac{\gamma^2}{2}$, then in the thin quantum wedge,  the curve is within the bead containing the third marked point, while the target of the interface and the location of the weight $W-2$ force point are replaced by the endpoints of the bead that are closer to the infinite marked point and the finite marked point of the thin wedge, respectively. 
 In the welding on the right hand side of~\eqref{eq:thm-weld-0}, we  conformally weld the boundary arc of the quantum triangle between the vertices of weights $2-W$ and $\gamma^2$ to the boundary of the quantum wedge, identifying the weight $2-W$ vertex of the quantum triangle with the finite 
 marked point of the quantum wedge.
\end{theorem}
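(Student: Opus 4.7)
The plan is to prove Theorem~\ref{thm:weld-0} by combining the LCFT descriptions of the three-pointed quantum wedge (Proposition~\ref{prop:3ptwedge}) and the quantum triangle (Definition~\ref{def-qt-thick}) with the conformal welding of quantum wedges~\cite[Theorem 1.2]{DMS14} and the imaginary geometry framework of~\cite{MS16a, MS17}. I first treat the thick case $W\in(\gamma^2/2, 2-\gamma^2/2)$ where $\QT(2-W,W,\gamma^2)$ has only thick vertices, and then extend to the thin case by thick-thin duality.

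By Proposition~\ref{prop:3ptwedge}, I embed $\mathcal{M}_{2,\bullet}^{\mathrm{wedge}}(W)$ in the strip $\mathcal S$ as a Liouville field with insertions $(2Q-\beta,-\infty)$, $(\beta,+\infty)$, $(\gamma, 0)$, where $\beta=\gamma+(2-W)/\gamma$. I then draw an independent $\SLE_\kappa(W-2;-W,W-2)$ curve $\eta$ from $0$ targeting $-\infty$, and realize it as a flow line of an auxiliary GFF via imaginary geometry~\cite[Theorem 1.1]{MS16a}. Since the sum of right-side force-point weights equals $-2$, $\eta$ reaches its continuation threshold precisely when the force point at $a_1=+\infty$ is absorbed, pinching off a bounded region from the unbounded region containing $a_2$.

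Next, I apply~\cite[Theorem 1.2]{DMS14} together with the flow-line boundary conditions from~\cite[Theorem 1.5]{MS16a} to identify each side of $\eta$ as a specific quantum surface. On the unbounded side, the local force-point structure of $\eta$ near $a_3$ matches the DMS14 welding of a weight-$W$ wedge with a weight-$(2-W)$ wedge along infinite boundaries, identifying this region as a quantum wedge of weight $W$ and giving the $\mathcal{M}_{2}^{\mathrm{wedge}}(W)$ factor on the RHS. On the bounded pinched side, I identify the quantum triangle $\QT(2-W,W,\gamma^2)$ by computing the three LCFT insertions: the weight-$(2-W)$ vertex appears at $a_1$ where $\eta$ terminates, the weight-$\gamma^2$ vertex appears at $a_3$ where $\eta$ starts, and the weight-$W$ vertex is induced from the short wedge-boundary arc from $a_3$ to $a_1$ by the imaginary geometry boundary data. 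The constant $C$ is fixed by matching boundary-length disintegrations: Lebesgue in $u$ on the LHS (per Definition~\ref{def:3ptwedge-0}) against the triangle-length mass $\ell^{(\bar\beta-2Q)/\gamma-1}$ on the RHS (Proposition~\ref{prop:QT-bdry-length}), together with the LCFT normalization constants $1/(Q-\beta)$ and $1/\prod_i(Q-\beta_i)$.

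The main obstacle is the precise identification of the three insertions on the pinched-off bounded region, and in particular the weight-$W$ vertex, which requires careful tracking of how the Liouville insertions of the three-pointed wedge redistribute across the SLE interface and how the force-point weight $W-2$ at $a_1$ interacts with the wedge's insertion $\beta$ at this point. The thin case $W\in(0,\gamma^2/2)$ then follows from the thick case via the thick-thin duality of Definitions~\ref{def-qt-thin} and~\ref{def-thin-wedge}: the SLE curve in a thin quantum wedge lives entirely within the single bead containing $a_3$, and the pinched-off region is precisely a thin triangle obtained by concatenating a thin disk of weight $W$ at the weight-$W$ vertex of the corresponding thick triangle $\QT(2-W,\gamma^2-W,\gamma^2)$, with the exclusion $W\neq\gamma^2/2$ avoiding the degenerate boundary case.
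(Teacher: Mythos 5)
There is a genuine gap at the heart of your argument: the identification of the two complementary components of $\eta$ as an independent pair (weight-$W$ wedge, quantum triangle $\QT(2-W,W,\gamma^2)$) given the interface length is exactly the content of the theorem, and the tools you invoke do not deliver it. Theorem 1.2 of \cite{DMS14} (Theorem~\ref{thm:wedge-welding} here) applies to an independent $\SLE_\kappa(W_1-2;W_2-2)$ drawn on a weight-$(W_1+W_2)$ wedge from its finite marked point to its infinite one; here the curve starts at a quantum-typical boundary point $a_3$ of a weight-$W$ wedge, carries a third force point at $a_1$, and stops at a continuation threshold by merging into the boundary, so "matching the local force-point structure near $a_3$" with that theorem proves nothing about the joint law of the two cut pieces. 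Likewise, realizing $\eta$ as a flow line of an auxiliary GFF and quoting the flow-line boundary conditions of \cite{MS16a} gives information about conditional laws of curves, not about the LQG surfaces cut out by a curve independent of the LQG field; in particular the $\frac{2}{\gamma}$-insertion (weight $\gamma^2$) at the random point where $\eta$ merges with the boundary is not boundary data one can read off, and deriving it is the hard step. Your own "main obstacle" paragraph names this issue but offers no mechanism to resolve it, so the proposal asserts the conclusion rather than proving it. (Your bookkeeping of the insertions is also off: the curve does not terminate at $a_1$ but merges with the boundary beyond it; by weight additivity at the $\gamma$-insertion $a_3$, where the force points $W-2$ and $-W$ split the weight-$2$ point into $W$ and $2-W$, the triangle's vertex at $a_3$ is $2-W$, its weight-$W$ vertex is at $a_1$, and the weight-$\gamma^2$ vertex is at the merging point, as used in Proposition~\ref{prop:p-qt}.)

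The paper takes a different and essentially unavoidable route. It first proves a quantum-disk analogue (Proposition~\ref{prop:disk+QT-0}): via Lemma~\ref{lem:prop:disk+QT-0} one marks a quantum-typical point on the interface of $\Wd(\Md_2(W),\QT(0,2-W_0,\cdot))$ using Proposition~\ref{prop-m2dot}, then invokes \cite[Corollary 4.11]{SY23} (exploiting that weight $2+\gamma^2$ corresponds to a zero insertion) and the argument of \cite[Proposition 6.3]{ASY22} to identify the welded surface as a quantum triangle decorated by some curve measure; finiteness of that measure is a separate argument (Lemma~\ref{lem:prop:disk+QT-1}), and the curve is identified as $\SLE_\kappa(W-2;-W,W-2)$ by a limiting argument as in \cite[Proposition 3.6]{ASYZ24}. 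Only then is the wedge statement deduced, by a Girsanov reweighting $\epsilon^{\frac{(2Q-\beta)^2-\beta^2}{4}}e^{\frac{(2Q-\beta)-\beta}{2}X_\epsilon(0)}$ at the third marked point, which converts the three-pointed disk factor $\QT(W,2,W)$ into the three-pointed wedge via Propositions~\ref{prop:3ptwedge} and~\ref{thm-lcft-wedge} while leaving the triangle factor and the curve law unchanged; the thin case uses the concatenation identities~\eqref{eq-marked-decomp-1}--\eqref{eq-marked-decomp-2} rather than a direct appeal to thick-thin duality of the statement itself. To repair your proposal you would need either to follow this disk-then-reweight strategy or to supply a complete substitute for the identification of the cut-off piece; as written, that step is missing.
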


We expect Theorem~\ref{thm:weld-0} to be true for all $W\in(0,2)$, but we only consider the $W\in (0,2-\frac{\gamma^2}{2})\backslash\{\frac{\gamma^2}{2}\}$ case for technical convenience.

\begin{figure}
    \centering
    \includegraphics[scale=0.85]{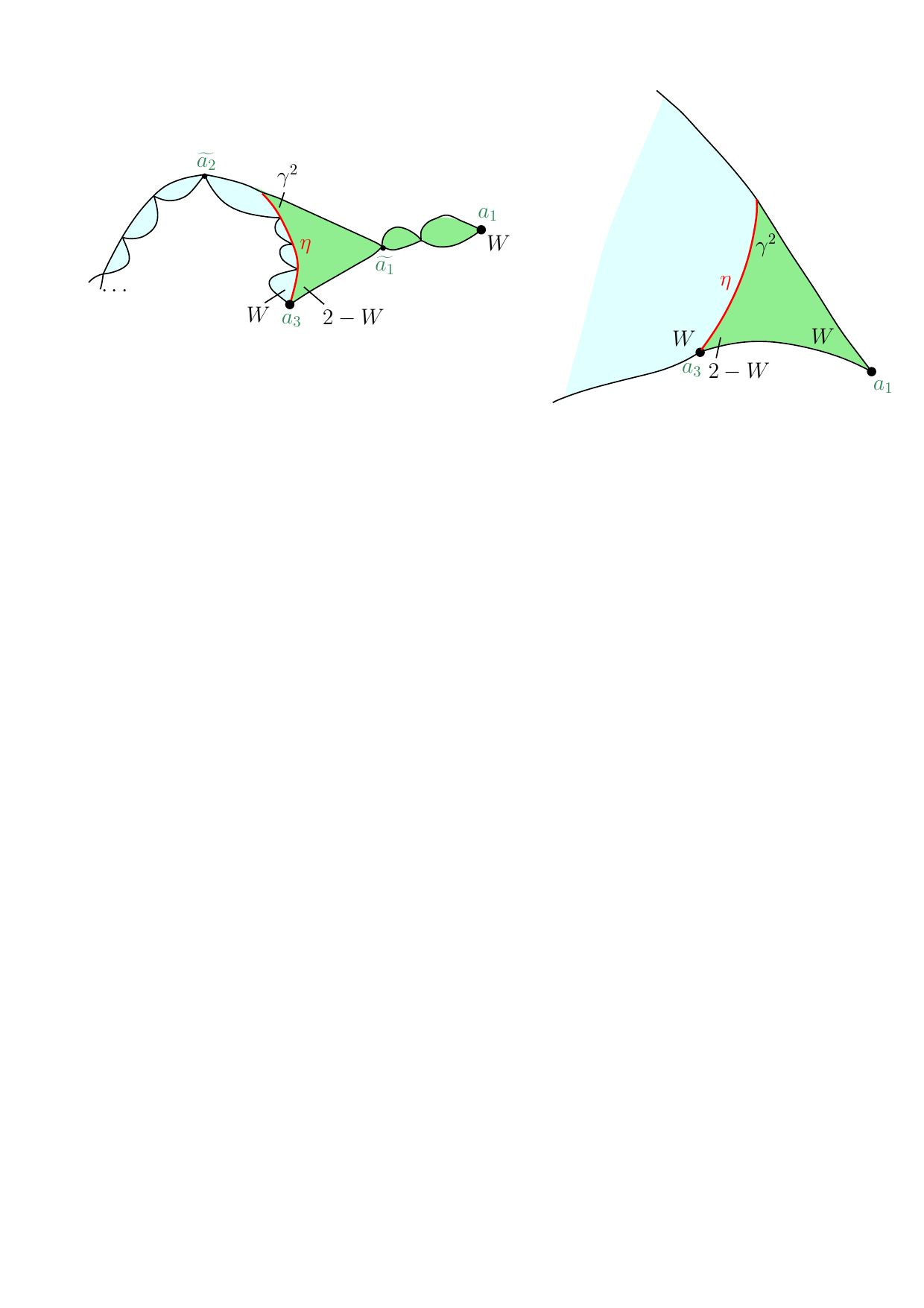}
    \caption{An illustration of Theorem~\ref{thm:weld-0} for $W<\frac{\gamma^2}{2}$ (left) and $W>\frac{\gamma^2}{2}$ (right).  {The point $a_2$ is located at the infinite end of the quantum wedge and is not shown in the figure. Note that $2-W>\frac{\gamma^2}{2}$ since $W\in(0,2-\frac{\gamma^2}{2})$.}}
    \label{fig:pthetaweld}
\end{figure}

In Section \ref{sec:pre-conf}, we recall the conformal welding of quantum wedges in \cite{DMS14} and quantum triangles in \cite{ASY22}. We prove Theorem~\ref{thm:weld-0} in Section~\ref{subsec:pf-thm:weld-0}.

\subsection{Conformal welding of quantum surfaces}\label{sec:pre-conf}

In this section we briefly review the conformal welding of quantum wedges and quantum triangles which will later be applied in our argument.

 For a measure $\mathcal{M}$ on the space of quantum surfaces (possibly with marked points) and a conformally invariant measure $\mathcal{P}$ on curves, we write $\mathcal{M}\otimes\mathcal{P}$ for the law of curve decorated quantum surface described by sampling $(S,\eta)$ from  $\mathcal{M}\times\mathcal{P}$ and then drawing $\eta$ on top of $S$. To be more precise, for a domain with marked points $\cD = (D,z_1,...,z_n)$, suppose $\mathcal{M}_{\cD}$ is a measure such that  for $\phi$ sampled from $\mathcal M_{\cD}$ the law of $(D,\phi,z_1,...,z_n)/{\sim_\gamma}$ is $\mathcal{M}$. Let $\mathcal{P}_{\cD}$ be the measure $\mathcal{P}$ on the domain $\cD$,  and assume that for any conformal map $f$ one has $\mathcal{P}_{f\circ\cD} = f\circ \mathcal{P}_{\cD}$, i.e., $\mathcal{P}$ is invariant under conformal maps. Then $\mathcal{M}\otimes\mathcal{P}$ is defined by  $(D,\phi,\eta,z_1,...,z_n)/{\sim_\gamma}$ for $\eta\sim\mathcal{P}_{\cD}$. This notion is well-defined for the quantum surfaces and SLE-type curves considered in this paper.

We begin with the conformal weldings of two quantum wedges and two quantum disks.
\begin{theorem}[Theorem 1.2 of \cite{DMS14}]\label{thm:wedge-welding}
    Let $\gamma\in(0,2),\kappa=\gamma^2$ and $W_1,W_2>0$. Then  
$$\mathcal{M}^{\rm wedge}_2(W_1+W_2)\otimes \SLE_{\kappa}(W_1-2;W_2-2) =  \Wd(\mathcal{M}^{\rm wedge}_2(W_1),\mathcal{M}^{\rm wedge}_2(W_2)).  $$
\end{theorem}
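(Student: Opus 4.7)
The plan is to follow the strategy of the original proof in \cite{DMS14}: first establish a forward direction, showing that cutting a quantum wedge of weight $W_1+W_2$ by an independent $\SLE_\kappa(W_1-2;W_2-2)$ produces two independent wedges of weights $W_1$ and $W_2$ with matching quantum lengths along the interface; then invoke uniqueness of conformal welding to promote this pointwise identification to equality of the full curve-decorated measures.

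For the forward direction, I would embed the weight $W_1+W_2$ wedge as $(\bbH,\phi,0,\infty)$ via the LCFT description (Proposition~\ref{thm-lcft-wedge} transported from $\mathcal{S}$ to $\bbH$), so that $\phi$ has a $\beta$-insertion at $0$ with $\beta=\gamma+\tfrac{2-(W_1+W_2)}{\gamma}$. Draw $\eta\sim\SLE_\kappa(W_1-2;W_2-2)$ from $0$ to $\infty$ independently of $\phi$, parametrize by capacity with Loewner maps $g_t$ and driving function $W_t$, and track the quantum boundary lengths of the left and right sides of $\eta_{[0,t]}$ via the change-of-coordinates $\phi\circ g_t^{-1}+Q\log|(g_t^{-1})'|$. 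Itô calculus applied to the gap processes $W_t-V_t^{\rmL}$ and $V_t^{\rmR}-W_t$, together with the Girsanov-type identity coming from the $\beta$-insertion, identifies these boundary-length processes with time-reversed Bessel-type processes whose dimensions and drifts are precisely those appearing in the definitions of weight-$W_1$ and weight-$W_2$ wedges.

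Next, I would uniformize each complementary component of $\bbH\setminus\eta$ back to $\mathcal{S}$ and verify that the resulting field has the radial-lateral decomposition of $\mathcal{M}_2^{\mathrm{wedge}}(W_i)$, with the correct conditioning on the radial part coming from the force-point weight $W_i-2$ and the lateral part remaining an independent free boundary GFF on $\mathcal{S}$ by the Markov property. When $W_i<\frac{\gamma^2}{2}$, the resulting beaded surface should match the thin wedge construction of Definition~\ref{def-thin-wedge}, which would be checked by analyzing the excursions of the Bessel process into the negative region. Conditional independence of the two fields given $\eta$ follows from the Markov property of the GFF across $\eta$ combined with the fact that the SLE was drawn independently of $\phi$, so the two sides are genuinely independent wedges welded along the common interface according to quantum length.

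The final step invokes the conformal welding uniqueness theorem for quantum wedges (a consequence of Sheffield's welding, extended to infinite-length boundary arcs): the pair $(\mathcal{W}^\rmL,\mathcal{W}^\rmR)$ of independent wedges together with the quantum-length parametrization of the interface a.s.\ determines the welded curve-decorated surface. The main obstacle I expect is the stochastic-calculus identification of each side as a wedge of the \emph{exact} correct weight: the Bessel dimensions, the sign of the drift in the radial decomposition, and the precise form of the conditioning must all line up with the definition of $\mathcal{M}_2^{\mathrm{wedge}}(W_i)$ on the nose, and any mismatch in constants would invalidate the argument. Uniqueness of welding, while technically delicate for surfaces of infinite boundary length, can be invoked from the literature on quantum zippers.
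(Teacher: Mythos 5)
The paper does not prove this statement at all---it is quoted verbatim as Theorem 1.2 of \cite{DMS14}---so the comparison is with the original argument there, and your sketch does not reproduce it: it has a genuine gap at exactly the point where the theorem is hard. You assert that ``conditional independence of the two fields given $\eta$ follows from the Markov property of the GFF across $\eta$ combined with the fact that the SLE was drawn independently of $\phi$.'' This is not true and cannot be repaired softly: the free-boundary GFF has long-range correlations, and conditionally on $\eta$ the restrictions of the field to the two complementary components are \emph{not} independent. What is independent is the pair of quantum surfaces obtained after forgetting the embedding (i.e.\ modulo $\sim_\gamma$), and this holds only for the precise matching of the insertion $\beta=\gamma+\frac{2-(W_1+W_2)}{\gamma}$ with the force-point weights $W_1-2,W_2-2$. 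That independence is the entire content of the theorem; in \cite{DMS14} it is obtained from Sheffield's quantum zipper \cite{She16a}---the coupling of the \emph{reverse} SLE Loewner flow with the free-boundary GFF and the stationarity of quantum-length zipping---combined with Girsanov-type weightings to insert the force points, the characterization of quantum wedges by invariance under zooming in at the marked point, and Bessel-excursion arguments for the thin regime. Your proposal never invokes the reverse-flow coupling, so the key mechanism is missing.

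The other step you lean on, ``It\^o calculus applied to the gap processes $W_t-V_t^{\rmL}$, $V_t^{\rmR}-W_t$ \ldots identifies these boundary-length processes with time-reversed Bessel-type processes,'' is also not a workable mechanism as stated: the quantum lengths of the left and right sides of $\eta([0,t])$ are GMC observables of the field on the curve, not functionals of the forward Loewner data, and they do not satisfy an SDE obtainable from the driving process alone; identifying them (and the law of the cut surfaces) is again what the zipper stationarity is for. The final step is fine in spirit: once the cutting direction is known, uniqueness of the welding does follow from conformal removability of $\SLE_\kappa$ curves for $\kappa\in(0,4)$, as in \cite{She16a,DMS14}, and your appeal to that literature is legitimate. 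But as written, the proof of the forward direction reduces the theorem's deep content to two assertions (Markov-property independence, It\^o identification of boundary lengths) that are respectively false and unsubstantiated, so the proposal does not constitute a proof.
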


\begin{theorem}[Theorem 2.2 of \cite{AHS20}]\label{thm:disk-welding}
    Let $\gamma\in(0,2),\kappa=\gamma^2$ and $W_1,W_2>0$. Then there exists a constant $c:=c_{W_1,W_2}\in(0,\infty)$ such that
$$\Md_2(W_1+W_2)\otimes \SLE_{\kappa}(W_1-2;W_2-2) = c\,\Wd(\Md_2(W_1),\Md_2(W_2)).  $$
\end{theorem}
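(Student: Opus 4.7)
The plan is to derive the disk welding identity from the wedge welding Theorem~\ref{thm:wedge-welding}. The key intermediate tool is a decomposition lemma: a weight-$W$ quantum wedge with an additional quantum-typical boundary marked point (a sample of $\mathcal M_{2,\bullet}^{\rm wedge}(W)$ as in Definition~\ref{def:3ptwedge-0}) decomposes along that marked point as the concatenation of a weight-$W$ quantum disk (the finite piece, with left-boundary length $\ell$ distributed as Lebesgue measure on $(0,\infty)$) and an independent weight-$W$ quantum wedge (the infinite piece). This is the natural analog for wedges of Proposition~\ref{prop-m2dot} for disks, and can be proved in the same LCFT framework, combining Proposition~\ref{prop:3ptwedge}, the field decomposition in Definition~\ref{def-lf-strip}, and a Girsanov-type identity to identify the finite-side piece as a weight-$W$ quantum disk.

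Given the decomposition lemma, I would sample an additional marked point on the welding interface in Theorem~\ref{thm:wedge-welding} according to its quantum length measure. On the LHS, the marked point lies on the $\SLE_\kappa(W_1-2;W_2-2)$ curve inside a weight-$W_1+W_2$ wedge; applying the decomposition cuts the wedge into a weight-$W_1+W_2$ quantum disk decorated by the initial arc of the SLE curve (which by the SLE domain Markov property is an $\SLE_\kappa(W_1-2;W_2-2)$ from a boundary point to the marked point) welded to an independent weight-$W_1+W_2$ wedge. On the RHS, the marked point sits on the welding arc of the two wedges at the same quantum-length position; applying the decomposition independently to each wedge splits $\mathcal M_2^{\rm wedge}(W_i)$ into a weight-$W_i$ disk (with left-boundary length $\ell$) welded to an independent weight-$W_i$ wedge. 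Identifying the finite-side pieces across the two sides of Theorem~\ref{thm:wedge-welding} (the infinite-wedge pieces agree trivially by Theorem~\ref{thm:wedge-welding} applied once more) yields the disk welding identity. The constant $c_{W_1,W_2}$ emerges from reconciling a single Lebesgue-$d\ell$ factor on the LHS with the two independent $d\ell$ factors on the RHS, together with the boundary-length disintegrations of $\Md_2(W_1)$ and $\Md_2(W_2)$.

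The principal obstacle is the thin regime $0<W_i<\gamma^2/2$, where quantum disks and wedges are beaded surfaces built from Poisson point processes of thick disks (Definitions~\ref{def-thin-disk} and~\ref{def-thin-wedge}). The decomposition lemma must then be interpreted Poissonianly: the quantum-typical marked point falls almost surely inside one of the constituent thick disks of the thin wedge, and one must check that the pieces on either side of the cut inherit the correct thin-disk and thin-wedge Poisson structure via standard Poisson thinning and the memoryless property of the intensity. Tracking the explicit value of $c_{W_1,W_2}$ would require the boundary-length density of $\Md_2(W)$ from Lemma~\ref{lm:bdry-length-law} specialized to the two-pointed setting, but since only existence is asserted here, this bookkeeping can be finessed.
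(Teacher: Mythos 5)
You are trying to prove a theorem that the paper itself only quotes from \cite{AHS20}, so the comparison here is with the actual argument in that reference; but in any case your route has a genuine gap at its very first step. The proposed ``decomposition lemma'' --- that a weight-$W$ quantum wedge with a quantum-typical boundary marked point decomposes at that point into a weight-$W$ quantum disk concatenated with an independent weight-$W$ quantum wedge --- is not true, and for thick $W$ it is not even meaningful: a single boundary point does not disconnect the surface, so there is nothing to ``decompose along''; a surface can only be split by cutting along a curve. The correct statements in this direction look quite different. In the thin regime the decomposition at a typical boundary point has \emph{three} factors, $\mathcal{M}_{2,\bullet}^{\mathrm{wedge}}(W) = c'\,\Md_2(W)\times\Md_{2,\bullet}(\gamma^2-W)\times \mathcal{M}_2^{\rm wedge}(W)$ (the marked thick bead sits in the middle), see~\eqref{eq-marked-decomp-2}; and for general $W$ the way to ``cut off the finite part'' at a typical boundary point is to draw an independent $\SLE_\kappa(W-2;-W,W-2)$ from it, in which case the cut-off piece is a quantum triangle $\QT(2-W,W,\gamma^2)$, \emph{not} a weight-$W$ disk --- that is precisely Theorem~\ref{thm:weld-0} of this paper, whose proof requires the LCFT/Girsanov machinery. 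Insertion bookkeeping shows your lemma cannot hold as stated: a quantum-typical boundary point carries a $\gamma$-insertion (a weight-$2$ vertex, cf.\ Proposition~\ref{prop-m2dot}, where $\QT(W,2,W)=C\Md_{2,\bullet}(W)$), whereas the endpoints of a weight-$W$ disk or wedge carry $\beta=\gamma+\frac{2-W}{\gamma}$ insertions; these agree only when $W=2$. Relatedly, the claim that the finite piece is a weight-$W$ disk whose boundary length is Lebesgue-distributed is inconsistent with the power-law boundary length density of $\Md_2(W)$.

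The later steps inherit this problem. On the left-hand side, marking a point on the interface and ``cutting'' there does not produce a weight-$(W_1+W_2)$ disk decorated by the initial arc of the curve: an arc from a boundary point into the interior does not separate the wedge, and there is no canonical crosscut through the marked point. And even granting some decomposition on both sides, one cannot simply ``identify the finite-side pieces'' and cancel the common infinite-wedge factors in an identity between measures on curve-decorated surfaces without an explicit conditional-independence/disintegration argument. The derivation in \cite{AHS20} does use the wedge welding of \cite{DMS14} as an input, but the passage from wedges to disks there is carried out by a substantially more involved mechanism (adding quantum-typical marked points, disintegrating over boundary lengths, comparing the field laws directly, and treating the thin regime via the Poissonian bead structure); the two- and three-factor decompositions quoted in this paper at~\eqref{eq-marked-decomp-1}--\eqref{eq-marked-decomp-2} are outputs of that analysis, not facts available for free at the outset.
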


\begin{figure}[ht]
	\centering
	\begin{tabular}{ccc} 
			\includegraphics[scale=0.7]{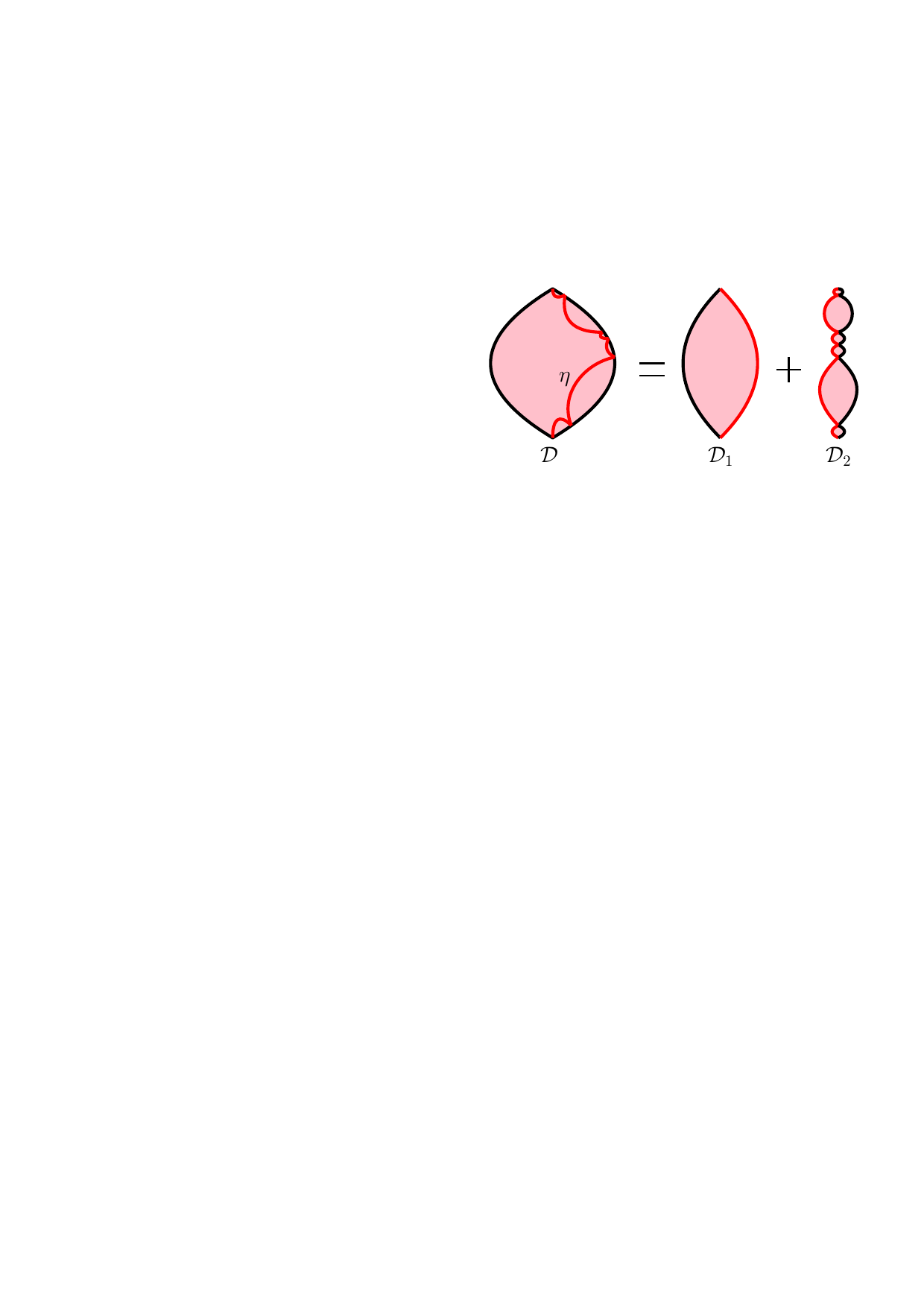}
			& & \ \ \ \
			\includegraphics[scale=0.5]{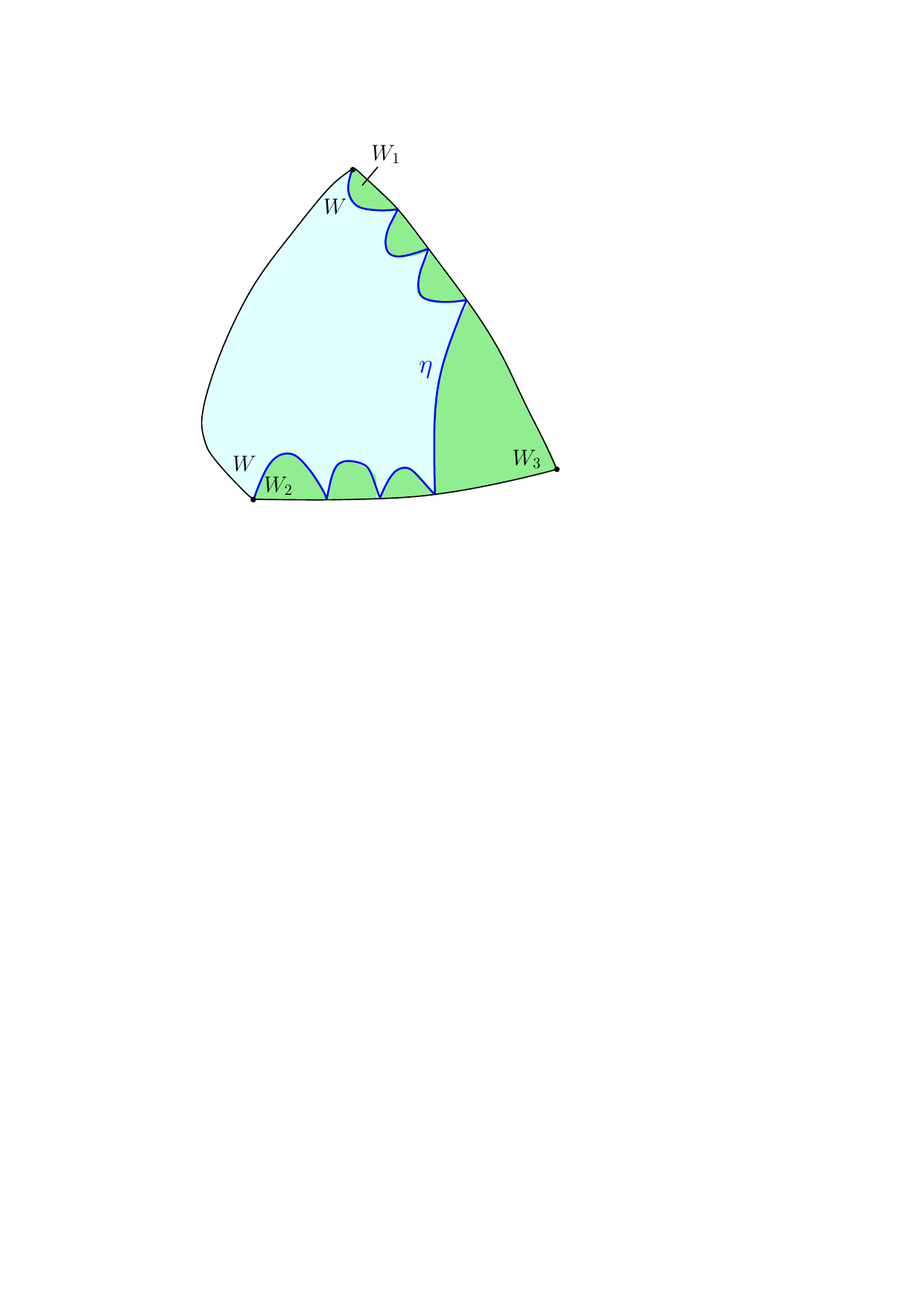}
		\end{tabular}
	\caption{\textbf{Left}: Illustration of Theorem~\ref{thm:disk-welding} with $W_1\ge\frac{\gamma^2}{2}$ and $W_2<\frac{\gamma^2}{2}$. \textbf{Right}: Illustration of Theorem~\ref{thm:disk+QT} with $W,W_3\ge\frac{\gamma^2}{2}$ and $W_1,W_2<\frac{\gamma^2}{2}$.}\label{fig-qt-weld} 
\end{figure}

Here, if $W_1+W_2<\frac{\gamma^2}{2}$, then $\Md_2(W_1+W_2)\otimes \SLE_{\kappa}(W_1-2;W_2-2)$ is understood as drawing independent $ \SLE_{\kappa}(W_1-2;W_2-2)$ curves in each bead of the weight $W_1+W_2$ disk, and the $\SLE_{\kappa}(W_1-2;W_2-2)$ is defined by their concatenation.  More explicitly, it is obtained by replacing the measure $\Md_2(\gamma^2-W_1-W_2)$ with $\Md_2(\gamma^2-W_1-W_2)\otimes \SLE_{\kappa}(W_1-2;W_2-2)$ in the Poisson point process construction of $\Md_2(W_1+W_2)$ in Definition~\ref{def-thin-disk}.

For a quantum triangle of weights $W+W_1,W+W_2,W_3$ with $W_2+W_3 = W_1+2$ embedded as $(D,\phi,a_1,a_2,a_3)$, we start by making sense of the $\SLE_{\kappa}(W-2;W_1-2,W_2-W_1)$ curve $\eta$ from $a_2$ to $a_1$. If the domain $D$ is simply connected (which corresponds to the case where $W+W_1,W+W_2, W_3\ge\frac{\gamma^2}{2}$), $\eta$ is just the ordinary $\SLE_{\kappa}(W-2;W_1-2,W_2-W_1)$  with force points at $a_2^-,a_2^+$ and $a_3$. Otherwise, let $(\tilde{D}, \phi,\tilde{a}_1,\tilde{a}_2, \tilde{a}_3)$ be the thick quantum triangle component as in Definition~\ref{def-qt-thin}, and sample an $\SLE_{\kappa}(W-2;W_1-2,W_2-W_1)$ curve $\tilde{\eta}$ in $\tilde{D}$ from $\tilde{a}_2$ to $\tilde{a}_1$. Then our curve $\eta$ is the concatenation of $\tilde{\eta}$ with independent $\SLE_{\kappa}(W-2;W_1-2)$ curves in each bead of the weight $W+W_1$   quantum disk (if $W+W_1<\frac{\gamma^2}{2}$) and $\SLE_{\kappa}(W-2;W_2-2)$ curves in each bead of the weight $W+W_2$  quantum disk (if $W+W_2<\frac{\gamma^2}{2}$). In other words, if $W+W_1<\frac{\gamma^2}{2}$ and $W+W_2<\frac{\gamma^2}{2}$, then the notion $\QT(W_1,W_2,W_3)\otimes\SLE_{\kappa}(W-2;W_1-2,W_2-W_1)$ is defined through $\QT(\gamma^2-W-W_1,\gamma^2-W-W_2,W_3)\otimes \SLE_{\kappa}(W-2;W_1-2,W_2-W_1)$, $\Md_2(W+W_1)\otimes\SLE_{\kappa}(W-2;W_1-2)$ and $\Md_2(W+W_2)\otimes\SLE_{\kappa}(W-2;W_2-2)$ as in Definition~\ref{def-qt-thin}, while other cases follows similarly.

With this notation, we state the welding of quantum disks with quantum triangles below.
\begin{theorem}[Theorem 1.1 of \cite{ASY22}]\label{thm:disk+QT}
Let $\gamma\in(0,2)$ and $\kappa=\gamma^2$. Fix $W,W_1,W_2,W_3>0$ such that $W_2+W_3=W_1+2$. There exists some constant $c:=c_{W,W_1,W_2,W_3}\in (0,
\infty)$ such that
\begin{equation}\label{eqn:disk+QT}
    \QT(W+W_1,W+W_2,W_3)\otimes \SLE_{\kappa}(W-2;W_2-2,W_1-W_2) = c\Wd (\Md_2(W),\QT(W_1,W_2,W_3)).
\end{equation}
%That is, if we draw an independent $\SLE_\kappa(W-2;W_1-2,W_2-W_1)$ curve $\eta$ on a sample from $\QT(W+W_1,W+W_2,W_3)$ embedded as $(D,\phi,a_1,a_2,a_3)$, then the quantum surfaces to the left and right of $\eta$ are  independent quantum disks and triangles conditioned on having the same interface quantum length.
\end{theorem}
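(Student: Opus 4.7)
The plan is a reverse-welding argument: sample the LHS, cut along the SLE interface, and identify the two pieces as an independent quantum disk and quantum triangle of the claimed weights. The key ingredients are the LCFT descriptions of all surfaces involved and the flow line coupling from imaginary geometry. I would first reduce to the thick case $W+W_1, W+W_2, W_3 > \gamma^2/2$; the thin case follows by the Poisson concatenation in Definitions~\ref{def-qt-thin} and~\ref{def-thin-disk}, since the SLE restricted to each thin bead is handled by Theorem~\ref{thm:disk-welding}.

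In the thick case, sample $(\mathcal S, \phi, +\infty, -\infty, 0)$ from $\QT(W+W_1, W+W_2, W_3)$ with an independent $\SLE_\kappa(W-2; W_2-2, W_1-W_2)$ curve $\eta$ from $-\infty$. By the force point arithmetic and imaginary geometry, $\eta$ targets $+\infty$ (the relation $W_2 + W_3 = W_1 + 2$ makes the weight accounting consistent with the triangle's insertions), and it divides $\mathcal{S}$ into a left region $S^L$ (not containing $0$) and a right region $S^R$ (containing $0$). Tracking how the boundary insertions split across $\eta$, the region $S^L$ inherits insertions of sizes $\gamma + \tfrac{2-W}{\gamma}$ at $\pm\infty$, while $S^R$ inherits insertions of sizes $\gamma + \tfrac{2-W_2}{\gamma}$, $\gamma + \tfrac{2-W_3}{\gamma}$, $\gamma + \tfrac{2-W_1}{\gamma}$ at the vertices $-\infty, 0, +\infty$ respectively. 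These are exactly the insertion patterns of $\Md_2(W)$ and $\QT(W_1, W_2, W_3)$.

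The remaining task is to promote this insertion match into a distributional identity showing that $S^L$ and $S^R$ are conditionally independent given the interface length and are distributed as the claimed measures. For this, I would first apply Theorem~\ref{thm:wedge-welding} to the local picture near each welded vertex, where a thick quantum triangle zooms to a quantum wedge of the corresponding weight and the $\SLE_\kappa(W-2; W_i-2)$ portion of $\eta$ splits this local wedge into wedges of weights $W$ and $W_i$. Then I would globalize via an LCFT computation using the disintegration in Proposition~\ref{prop:QT-bdry-length} and the boundary three-point structure constant $\bar H^{(\beta_1,\beta_2,\beta_3)}_{(0,1,0)}$ from~\eqref{eq-bar-H}, verifying that the boundary length laws on the two sides agree up to the finite constant $c_{W,W_1,W_2,W_3}$.

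The main obstacle is this global LCFT matching step: while the local wedge identifications near $\pm\infty$ follow directly from Theorem~\ref{thm:wedge-welding}, the presence of the spectator insertion at $0$ (of weight $W_3$) means the global identity requires tracking how this third insertion propagates through the welding. The Seiberg-bound constraints and the reflection/fusion identities for $H$ packaged in Lemmas~\ref{lem-H-thick} and~\ref{lem-H-thin} provide the right bookkeeping: they let us decouple the ``local'' wedge-welding contributions near the two welded vertices from the ``spectator'' boundary data at $0$, and show that the resulting normalization constant is finite and depends only on $(W, W_1, W_2, W_3)$.
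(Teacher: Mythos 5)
This statement is not proved in the paper at all: it is quoted verbatim as Theorem 1.1 of \cite{ASY22}, so there is no internal proof to compare against, and your proposal has to be judged as a from-scratch argument for that external result.

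As such, there is a genuine gap at the heart of your plan. The step you describe as ``promote this insertion match into a distributional identity'' \emph{is} the theorem; nothing you propose actually supplies it. Tracking how the boundary insertions would split across $\eta$ is only bookkeeping of exponents: it presupposes that cutting an independent $\SLE_\kappa(W-2;W_2-2,W_1-W_2)$ drawn on the triangle field produces two conditionally independent quantum surfaces with Liouville-field laws, which is exactly what must be proved. Your two substitutes do not close this. Matching boundary-length laws via Proposition~\ref{prop:QT-bdry-length} and the structure constants $\bar H^{(\beta_1,\beta_2,\beta_3)}_{(0,1,0)}$, or Laplace-transform identities as in Lemmas~\ref{lem-H-thick} and~\ref{lem-H-thin}, only controls one-dimensional functionals (lengths, areas); it cannot identify the joint law of the pair $(S^\rmL,S^\rmR)$ together with the interface, nor establish the conditional independence given the interface length. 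Likewise, the ``local wedge'' picture near each welded vertex (a thick triangle zooming to a quantum wedge, then Theorem~\ref{thm:wedge-welding}) gives only local absolute continuity near marked points, not a global identity of measures on curve-decorated surfaces; there is no mechanism in your write-up for passing from these local statements to the global one, and the ``spectator insertion'' paragraph is an acknowledgement of this rather than a solution.

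The actual route in \cite{ASY22} — and the one this paper itself mirrors when it proves its new welding statement, Theorem~\ref{thm:weld-0} in Section~\ref{subsec:pf-thm:weld-0} — is the opposite direction: start from an already established welding identity with \emph{two}-pointed surfaces (Theorem~\ref{thm:disk-welding}, ultimately \cite{DMS14,AHS20}), embed it via the LCFT uniform-embedding descriptions (as in Propositions~\ref{prop-m2dot} and~\ref{prop:3ptwedge}), and then create the third insertion by weighting the field by $\e^{\beta^2/4}e^{\frac{\beta}{2}\phi_\e(\cdot)}$ at a boundary point and letting $\e\to0$ (Girsanov), checking that this tilt acts consistently on both sides of the known identity and turns a two-pointed disk into a quantum triangle while leaving the interface law explicit. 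If you want a workable proof sketch, you should build it around this insertion/tilting mechanism rather than around cutting the left-hand side and matching length laws.
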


\subsection{Proof of Theorem~\ref{thm:weld-0}}\label{subsec:pf-thm:weld-0}

In this section we prove Theorem~\ref{thm:weld-0}. We will first prove a variant of Theorem~\ref{thm:weld-0} for quantum disks (Proposition~\ref{prop:disk+QT-0}), then transfer to the setting of quantum wedges to prove Theorem~\ref{thm:weld-0}. We will adapt the proof in~\cite[Section 3.2]{ASYZ24}.

Our starting point is Proposition~\ref{prop:disk+QT-0} below, which is the variant of~\cite[Proposition 3.6]{ASYZ24} having different weights for the quantum triangle. See Figure~\ref{fig:weld-qt-0}. The proof is almost identical.

\begin{definition}[Weight zero quantum disks and quantum triangles]
We define the weight zero quantum disk to be a line segment  modulo homeomorphisms of $\bbR^2$ parametrized  by quantum length where the total length is $\mathbf {t}\sim \mathds{1}_{t>0}dt$, and write $\Md_{2}(0)$ for its law.     For $W_1,W_2,W_3\ge0$ where one or more of $W_1,W_2,W_3$ is zero, we define the measure $\QT(W_1,W_2,W_3)$ using $\Md_2(0)$ in the same way as Definition~\ref{def-qt-thin}.
\end{definition}

\begin{figure}
    \centering
    \includegraphics[scale=0.6]{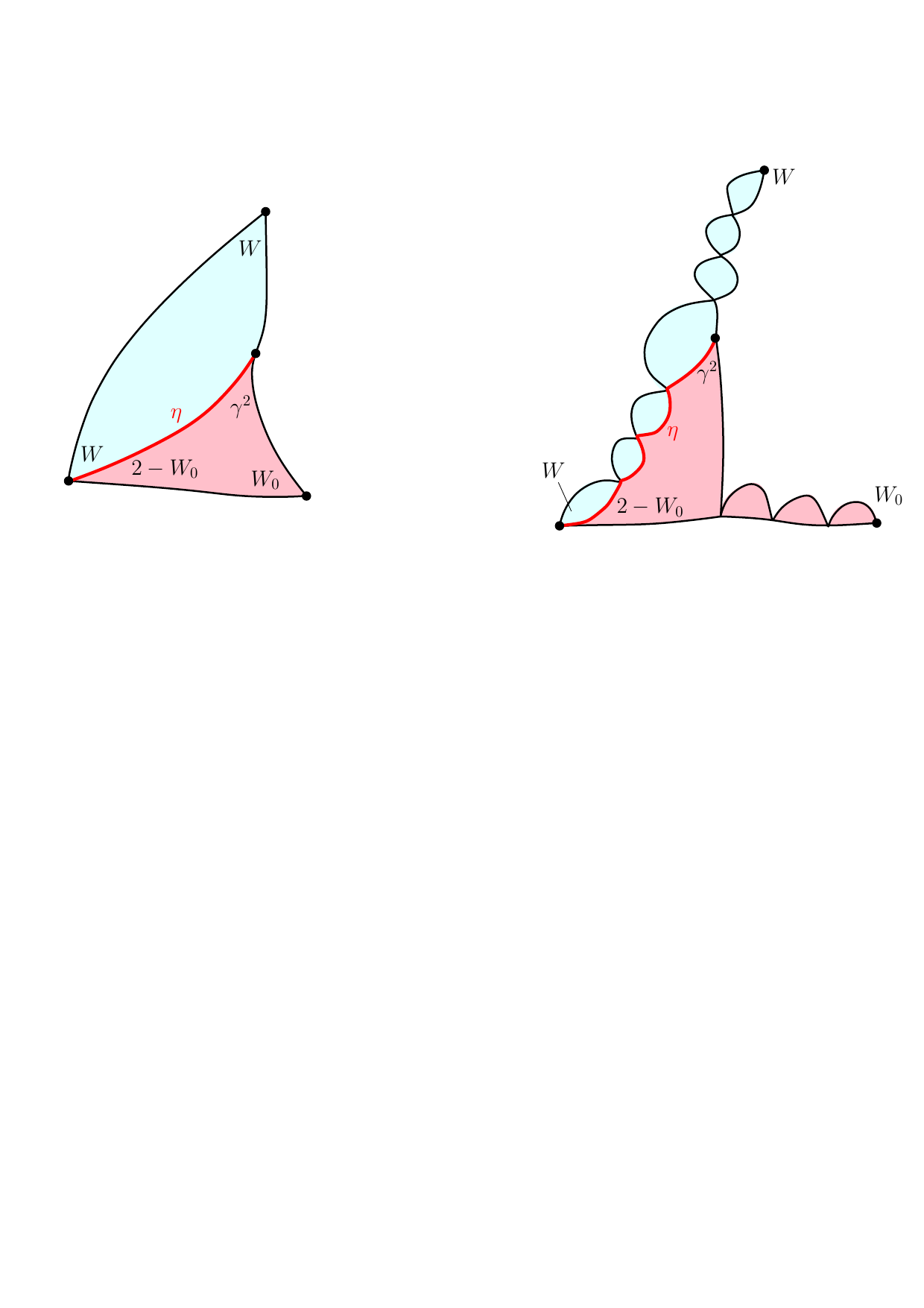}
    \caption{An illustration of Proposition~\ref{prop:disk+QT-0} with %$W_0,W<\frac{\gamma^2}{2}$ 
    {$W_0, W > \frac{\gamma^2}2$} 
    (left) and $W_0,W<\frac{\gamma^2}{2}$ (right).}
    \label{fig:weld-qt-0}
\end{figure}

\begin{proposition}\label{prop:disk+QT-0}
 Let $W_0\in(0,2-\frac{\gamma^2}{2})\backslash\{\frac{\gamma^2}{2}\}$.   Theorem~\ref{thm:disk+QT} holds for $(W_1,W_2,W_3) = (0,2-W_0,W_0)$.
\end{proposition}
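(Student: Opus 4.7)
The plan is to adapt the argument of \cite[Proposition 3.6]{ASYZ24}, which the paper flags as ``almost identical.'' The key structural observation is that, by the extended Definition~\ref{def-qt-thin}, the weight-zero quantum triangle $\QT(0, 2-W_0, W_0)$ is obtained by attaching a quantum-length-parametrized line segment (a weight-zero quantum disk) to a thick quantum triangle $\QT(\gamma^2, 2-W_0, W_0)$ at its weight-$\gamma^2$ vertex. Consequently, the welding $\Wd(\Md_2(W), \QT(0, 2-W_0, W_0))$ decomposes into two stages: first weld $\Md_2(W)$ to the thick triangle $\QT(\gamma^2, 2-W_0, W_0)$ along a portion of its left boundary arc, then identify the remaining sub-arc with the attached line segment.

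I would first apply Theorem~\ref{thm:disk+QT} in its thick-triangle form to $\Wd(\Md_2(W), \QT(\gamma^2, 2-W_0, W_0))$, producing a welding identity of the form
\[ \QT(W+\gamma^2, W+2-W_0, W_0) \otimes \SLE_\kappa(W-2;\, -W_0,\, \gamma^2 + W_0 - 2) \;=\; c' \,\Wd(\Md_2(W), \QT(\gamma^2, 2-W_0, W_0)). \]
Next, I would verify that attaching a weight-zero line segment at the weight-$\gamma^2$ vertex on both sides produces the desired identity. On the right-hand side, this reproduces $\QT(0, 2-W_0, W_0)$ by construction. On the left-hand side, the line-segment attachment at the vertex of weight $W+\gamma^2$ modifies the surface so that, after integrating against the line segment's quantum length, the weight-$W+\gamma^2$ vertex is effectively replaced by a weight-$W$ vertex at the new endpoint $a_1$, and the force point at the old $a_1$ of weight $\gamma^2 + W_0 - 2$ is correspondingly replaced by a force point at the new $a_1$ of weight $W_0 - 2$. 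The LCFT descriptions in Propositions~\ref{prop-m2dot} and~\ref{prop:3ptwedge}, combined with the boundary-length disintegration in Proposition~\ref{prop:QT-bdry-length} and the reflection identities among three-point structure constants underlying Lemma~\ref{lem-H-thin}, supply the exact Radon-Nikodym derivatives required to match both sides.

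The main technical obstacle is the careful bookkeeping of the SLE force-point structure near its continuation threshold. On the LHS of the target identity the right-side weights satisfy $(-W_0) + (W_0 - 2) = -2$, so the SLE terminates at its second right force point; geometrically this must correspond to the endpoint of the attached line segment at $a_1$ on the RHS. Matching this degenerate behavior, while simultaneously tracking the overall constant through the thick-thin duality (which is also why we restrict to $W \in (0, 2-\tfrac{\gamma^2}{2}) \setminus \{\tfrac{\gamma^2}{2}\}$), constitutes the bulk of the calculation. Beyond this bookkeeping, the proof reduces to the already-known thick case of Theorem~\ref{thm:disk+QT} and is otherwise identical to the argument in \cite[Proposition 3.6]{ASYZ24}.
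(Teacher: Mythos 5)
Your first and pivotal step---applying Theorem~\ref{thm:disk+QT} to the welding $\Wd(\Md_2(W),\QT(\gamma^2,2-W_0,W_0))$---is not a valid use of that theorem: it requires $W_2+W_3=W_1+2$, and with $(W_1,W_2,W_3)=(\gamma^2,2-W_0,W_0)$ this would read $2=\gamma^2+2$, which fails for every $\gamma\in(0,2)$. That constraint is exactly what makes the welding interface an $\SLE_\kappa(\underline{\rho})$ curve, so outside of it no welding statement of this type is available, and your displayed identity with interface $\SLE_\kappa(W-2;-W_0,\gamma^2+W_0-2)$ is unjustified. Consequently the subsequent manipulation (``attach a weight-zero segment at the weight-$\gamma^2$ vertex on both sides, replace the weight-$(W+\gamma^2)$ vertex by a weight-$W$ vertex'') has nothing rigorous to start from; identifying the law of the interface in precisely this welding is the actual content of the proposition, so the proposal in effect assumes what is to be proved. (A smaller slip: the restriction to $(0,2-\frac{\gamma^2}{2})\setminus\{\frac{\gamma^2}{2}\}$ in the statement is on $W_0$, not on $W$.)

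The paper's route avoids this obstruction. It marks the point of the weight-$W$ disk where the attached segment ends, turning $\Md_2(W)$ into $\QT(W,2,W)=C\Md_{2,\bullet}(W)$ via Proposition~\ref{prop-m2dot}, so that the welding becomes $\int_0^\infty \Wd\big(\QT(W,2,W;r),\QT(\gamma^2,2-W_0,\cdot\,;r)\big)\,dr$. A uniform-embedding/welding result at a zero insertion (\cite[Corollary 4.11]{SY23}, using that weight $2+\gamma^2$ corresponds to insertion size $0$) then shows this equals a constant times $\QT(W,W+2-W_0,\cdot)\otimes\mathsf{m}_{W,W_0}$ for \emph{some} a priori unidentified curve measure $\mathsf{m}_{W,W_0}$ (Lemma~\ref{lem:prop:disk+QT-0}); finiteness of $\mathsf{m}_{W,W_0}$ is established separately by explicit boundary-length computations (Lemma~\ref{lem:prop:disk+QT-1}); and only then is the curve identified as $\SLE_\kappa(W-2;W_2-2,W_1-W_2)$, by taking limits in Theorem~\ref{thm:disk+QT} when $W<\frac{\gamma^2}{2}$ and by cutting the weight-$W$ disk into thinner pieces via Theorems~\ref{thm:disk-welding} and~\ref{thm:disk+QT} when $W\ge\frac{\gamma^2}{2}$, following \cite[Proposition 3.6]{ASYZ24}. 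To repair your outline you would need to replace the invalid application of Theorem~\ref{thm:disk+QT} with an argument of this kind.
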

 {
\begin{proof}
Lemma~\ref{lem:prop:disk+QT-0} yields Proposition~\ref{prop:disk+QT-0} but without identifying the law of the curve, and  Lemma~\ref{lem:prop:disk+QT-1}  shows finiteness of the curve measure.
The curve is then identified as $\SLE_{\kappa}(W-2;W_2-2,W_1-W_2)$ by an argument identical to that of \cite[Proposition 3.6]{ASYZ24} (see the last two paragraphs of that proof): 
for $W < \frac{\gamma^2}2$, the law of the curve is identified by taking limits in Theorem~\ref{thm:disk+QT}, and for $W \geq \frac{\gamma^2}2$ the result is obtained from the previous case by  Theorems~\ref{thm:disk-welding} and~\ref{thm:disk+QT} via cutting the weight $W$ quantum disk into quantum disks of smaller weights. 
\end{proof}
}

 {We now establish Lemmas~\ref{lem:prop:disk+QT-0} and~\ref{lem:prop:disk+QT-1} which were used in the proof of Proposition~\ref{prop:disk+QT-0}. At the end of this section we prove Theorem~\ref{thm:weld-0}.}

\begin{lemma}\label{lem:prop:disk+QT-0}
 Let  $(W_1,W_2,W_3) = (0,2-W_0,W_0)$.  There exists some measure  {$\mathsf{m}_{W, W_0}$} on curves such that 
    \begin{equation}\label{eqn:disk+QT-A0}
   \QT(W+W_1,W+W_2,W_3)\otimes \mathsf{m}_{W,W_0} = c\Wd (\Md_2(W),\QT(W_1,W_2,W_3)).
\end{equation}
\end{lemma}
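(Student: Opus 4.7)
The plan is to deduce the identity~\eqref{eqn:disk+QT-A0} by passing to the $W_1 \to 0^+$ limit in Theorem~\ref{thm:disk+QT} applied with triangle weights $(W_1, 2-W_0, W_0)$, defining $\mathsf{m}_{W, W_0}$ implicitly as the curve component of the limiting welded measure. For each $W_1 \in (0, \tfrac{\gamma^2}{2})$, Theorem~\ref{thm:disk+QT} gives
\begin{equation*}
    \QT(W+W_1, W+2-W_0, W_0) \otimes \SLE_\kappa(W-2; -W_0, W_1+W_0-2) = c(W_1)\, \Wd\bigl(\Md_2(W), \QT(W_1, 2-W_0, W_0)\bigr),
\end{equation*}
and the target identity is the $W_1 = 0$ specialization, so it suffices to show that both sides admit limits.

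The first step would be to establish convergence of the two quantum triangle measures as $W_1 \to 0^+$: namely $\QT(W_1, 2-W_0, W_0) \to \QT(0, 2-W_0, W_0)$ and $\QT(W+W_1, W+2-W_0, W_0) \to \QT(W, W+2-W_0, W_0)$. By Definition~\ref{def-qt-thin}, each is a concatenation of a thin quantum disk (of weight $W_1$ or $W+W_1$ when in the thin regime) with a thick quantum triangle whose LCFT insertion parameters depend continuously on $W_1$. The thick components converge through the explicit LCFT description (Definitions~\ref{def-qt-thick} and~\ref{def-lf-strip}) together with the continuity of the boundary three-point density $\bar{H}$ in~\eqref{eq-bar-H}, while the thin disks converge using the Poisson description of Definition~\ref{def-thin-disk}: as $W_1 \to 0^+$, the normalization $(1-2W_1/\gamma^2)^{-2} \to 1$ and the intensity $\Md_2(\gamma^2 - W_1)$ varies continuously, so $\Md_2(W_1)$ degenerates to the Lebesgue-distributed line segment that defines $\Md_2(0)$. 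Combining with the welding disintegration~\eqref{eq:weld-def} and dominated convergence on each boundary-length slice then transfers the convergence to the welded measure on the right-hand side.

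I would then define $\mathsf{m}_{W, W_0}$ as the conditional law of the welding curve given the underlying quantum triangle $\QT(W, W+2-W_0, W_0)$ in the limit, so that~\eqref{eqn:disk+QT-A0} holds by construction with $c = \lim_{W_1 \to 0^+} c(W_1)$. This implicit definition sidesteps the direct analysis of the left-hand SLE, whose second and third force-point weights sum to $W_1 - 2 \to -2$ (the continuation threshold); the identification of $\mathsf{m}_{W, W_0}$ with an actual $\SLE$ measure is then handled separately in the proof of Proposition~\ref{prop:disk+QT-0}. The main obstacle is verifying the $W_1 \to 0^+$ convergences with enough uniformity to justify dominated convergence on the welding disintegrations; this should be controllable using the explicit boundary-length densities in Proposition~\ref{prop:QT-bdry-length} together with the Poissonian description of thin disks, though the details require care, especially near the line-segment limit of $\Md_2(W_1)$.
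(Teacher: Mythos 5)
There is a genuine gap at the heart of your strategy: the claimed convergence $\Md_2(W_1)\to\Md_2(0)$ as $W_1\downarrow 0$ is false, so $\QT(W_1,2-W_0,W_0)$ does not converge to the weight-zero triangle $\QT(0,2-W_0,W_0)$ in the sense you need. By Definition~\ref{def-thin-disk}, a weight-$W_1$ thin disk is a Poissonian chain of beads with intensity $\mathbf{1}_{t\in[0,T]}dt\times \Md_2(\gamma^2-W_1)$ and $T\sim(1-\tfrac{2W_1}{\gamma^2})^{-2}\mathrm{Leb}_{\bbR_+}$; as $W_1\to 0$ the bead intensity tends to $\mathbf{1}_{[0,T]}dt\times\Md_2(\gamma^2)$, a nondegenerate thick-disk measure, and the normalizing constant tends to $1$. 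Hence the limit of $\Md_2(W_1)$ is a nondegenerate beaded surface (a chain of weight-$\gamma^2$ beads), not the Lebesgue-length line segment that defines $\Md_2(0)$; the two-dimensional beads do not disappear in the limit. Since the whole plan rests on passing $W_1\to 0^+$ in Theorem~\ref{thm:disk+QT} and identifying the limit of the right-hand side with $\Wd(\Md_2(W),\QT(0,2-W_0,W_0))$, this step fails. A second, independent problem is your definition of $\mathsf{m}_{W,W_0}$ as ``the conditional law of the welding curve given the underlying quantum triangle'': the content of~\eqref{eqn:disk+QT-A0} is precisely the tensor-product structure, i.e.\ that the curve measure can be taken independent of the surface, and a limit of conditional laws does not deliver this factorization without an additional conformal-welding input; the acknowledged continuation-threshold degeneration of the $\SLE_\kappa(W-2;-W_0,W_1+W_0-2)$ marginal as $W_1\to0$ compounds this.

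The paper's proof avoids limits in $W_1$ altogether. It observes that welding the weight-zero triangle amounts to welding its line-segment part onto the boundary of the weight-$W$ disk, which simply marks a boundary point at quantum distance $r$ (the segment length) from the root; by Proposition~\ref{prop-m2dot} the marked disk is a constant times $\QT(W,2,W)$, so the welding becomes $c\int_0^\infty \Wd\big(\QT(W,2,W;r),\QT(\gamma^2,2-W_0,4-\gamma^2-W_0;r)\big)dr$. The product structure (existence of $\widetilde{\mathsf m}_{W,W_0}$) is then supplied by the triangle--triangle welding result \cite[Corollary 4.11]{SY23}, using that the merged vertex has weight $2+\gamma^2$, i.e.\ zero insertion, and the third vertex weight is changed from $4-\gamma^2-W_0$ to $W_0$ by the argument of \cite[Proposition 6.3]{ASY22}. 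If you want to salvage a limiting argument, it would have to be of this more delicate kind (as in the identification of the curve law in Proposition~\ref{prop:disk+QT-0}), not a naive weak limit of the welded measures as the triangle weight tends to zero.
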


\begin{proof}
    We first prove the statement with $W_3$ replaced by $4-\gamma^2-W_0$.  As in the proof of~\cite[Equation (3.5)]{ASYZ24}, by marking the point on the weight $W$ quantum disk on the interface with distance $r$ to the root, and applying Proposition~\ref{prop-m2dot}, we have
    \begin{equation}\label{eq:disk+QT-0-1}
    \begin{split}
        &\Wd (\Md_{2}(W),\QT(0,2-W_0,4-\gamma^2-W_0) )\\&= c\int_0^\infty\int_0^\ell \Wd\big(\QT(W,2,W;\ell-r,r),\QT(\gamma^2,2-W_0,4-\gamma^2-W_0;r)\big)\, dr\,d\ell\\
        &= c\int_0^\infty \Wd\big(\QT(W,2,W;r), \QT(\gamma^2,2-W_0,4-\gamma^2-W_0;r)\big)\,dr.
        \end{split}
    \end{equation}
    By~\cite[Corollary 4.11]{SY23}, since the Liouville field insertion size for the weight $2+\gamma^2$ is $\gamma+\frac{2-(2+\gamma^2)}{\gamma} = 0$, it follows that there exists a probability measure $\widetilde{\mathsf{m}}_{W, W_0}$ on curves such that~\eqref{eq:disk+QT-0-1} is equal to a constant times $\QT(W,W+2-W_0,4-\gamma^2-W_0)\otimes\wt{\mathsf{m}}_{W, W_0}$. The claim   then follows from the same argument in~\cite[Proposition 6.3]{ASY22}.
\end{proof}

% As different from~\cite{ASYZ24}, we needed to apply~\cite[Proposition 6.3]{ASY22} in the proof of Lemma~\ref{lem:prop:disk+QT-0}, and a priori it is not clear that $\mathsf{m}$ is a finite measure. We prove that this is true in the next lemma.
 
\begin{lemma}\label{lem:prop:disk+QT-1}
    The measure $\mathsf{m}_{W,W_0}$ in Lemma~\ref{lem:prop:disk+QT-0} is finite.
\end{lemma}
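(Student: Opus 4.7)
The plan is to extract $|\mathsf{m}_{W,W_0}|$ as an explicit positive finite ratio by evaluating both sides of~\eqref{eqn:disk+QT-A0} against a test function $f(S)$ that depends only on the underlying surface $S$. Since $f$ does not see the decorating curve, the left-hand side factorizes as $|\mathsf{m}_{W,W_0}|\cdot\QT(W,W+2-W_0,W_0)[f]$, so the task reduces to choosing $f$ for which the triangle expectation is positive and finite, and the welding integral on the right-hand side is also finite.

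A convenient choice is
$$f(S)=\exp\bigl(-A(S)-\mu_1 L_1(S)-\mu_2 L_2(S)-\mu_3 L_3(S)\bigr),$$
where $A$ is the quantum area, $L_1,L_2,L_3$ are the lengths of the three outer boundary arcs, and $\mu_1,\mu_2,\mu_3>0$ are taken large enough that every integral below converges. Via Lemma~\ref{lem-H-thick} (if all three weights of the triangle exceed $\gamma^2/2$), Lemma~\ref{lem-H-thin} (if exactly one does not), or iteration of the thick--thin reduction of Definition~\ref{def-qt-thin} (if both $W$ and $W_0$ are less than $\gamma^2/2$), the triangle expectation equals a positive finite multiple of the boundary structure constant $H^{(\beta_1,\beta_2,\beta_3)}_{(\mu_1,\mu_2,\mu_3)}$, which is finite and positive by Definition~\ref{def-H} and Proposition~\ref{prop-H-extend}. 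Note that the weight $W+2-W_0$ automatically exceeds $\gamma^2/2$ since $W_0<2-\gamma^2/2$.

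For the right-hand side, disintegrate the welding over the quantum length $\ell$ of the welding interface to obtain an $\ell$-integral of products
$$\Md_2(W;\ell)\bigl[e^{-A_1-\mu_1 L_1^{\mathrm{disk}}}\bigr]\cdot\QT(0,2-W_0,W_0;\ell)\bigl[e^{-A_2-\mu_2 L_2-\mu_3 L_3}\bigr],$$
where $A=A_1+A_2$ splits as the quantum areas of the welded pieces. Each factor is finite and explicitly controllable via the reflection coefficient formula~\eqref{eq-R-thin}, Lemmas~\ref{lem-H-thick} and~\ref{lem-H-thin}, and Proposition~\ref{prop:QT-bdry-length}; the weight-$0$ vertex of the triangle is absorbed into the thick--thin decomposition that realizes $\QT(0,2-W_0,W_0)$ as the concatenation of a line segment with $\QT(\gamma^2,2-W_0,W_0)$. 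The exponential boundary penalties produce exponential decay of the integrand at large $\ell$, while at small $\ell$ each factor is polynomial with an integrable exponent coming from Proposition~\ref{prop:QT-bdry-length}, so the $\ell$-integral converges.

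Dividing the finite right-hand side by the finite positive triangle expectation then yields $|\mathsf{m}_{W,W_0}|<\infty$. The main technical obstacle is verifying the convergence of the $\ell$-integral on the right-hand side, which amounts to an asymptotic analysis of the disk and triangle area--boundary joint Laplace transforms based on the explicit exponents from Proposition~\ref{prop:QT-bdry-length} combined with~\eqref{eq-R-thin}; choosing $\mu_1,\mu_2,\mu_3$ sufficiently large ensures the decay dominates the polynomial prefactors produced by the thick--thin reductions.
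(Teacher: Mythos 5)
There is a genuine gap, and it is at the heart of your strategy. Testing both sides of~\eqref{eqn:disk+QT-A0} against $f=\exp(-A-\sum_i\mu_iL_i)$ only yields information about $|\mathsf{m}_{W,W_0}|$ if $\QT(W,W+2-W_0,W_0)[f]$ is \emph{finite}, and this fails whenever the Seiberg bound for the welded triangle is violated. With $\beta_i=\gamma+\frac{2-W_i}{\gamma}$ one has $\sum_i\beta_i-2Q=\frac{2}{\gamma}(\gamma^2-W)$ (with the usual reflection $\beta\mapsto 2Q-\beta$ at a thin vertex), so for $W\ge\gamma^2$ the integral over the additive constant $\mathbf c\sim e^{(\frac{\bar\beta}{2}-Q)c}dc$ diverges as $c\to-\infty$: the divergence comes from an infinite mass of triangles with arbitrarily \emph{small} area and boundary lengths, which no choice of large $\mu_1,\mu_2,\mu_3$ can damp (your "take the $\mu_i$ large" remark addresses the large-$\ell$ end, not this one). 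In that regime both sides of~\eqref{eqn:disk+QT-A0} are $+\infty$ against your test function and nothing about $|\mathsf{m}_{W,W_0}|$ can be extracted. Note also that Lemma~\ref{lem-H-thick} and Lemma~\ref{lem-H-thin} are stated only under these Seiberg-type bounds; Proposition~\ref{prop-H-extend} gives a meromorphic continuation of $H$, not finiteness of the probabilistic expectation outside that range, so citing it does not repair the argument. Since the lemma is needed for all $W>0$ (the paper treats $W=2$, $W<2$, $W>2$ separately, and Theorem~\ref{thm:weld-0} uses $W_0=W\in(0,2-\frac{\gamma^2}{2})$, which exceeds $\gamma^2$ when $\gamma^2<\frac43$), your argument does not prove the statement. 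A secondary, more fixable issue: even in the Seiberg range, the convergence of your $\ell$-integral at large $\ell$ is asserted rather than proved — the welding interface is an interior curve, so its length is not penalized by the boundary terms, and the decay of the fixed-boundary-length Laplace transforms in $\ell$ needs an actual estimate.

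For comparison, the paper sidesteps the small-surface divergence by testing against the \emph{event} $E$ that one outer boundary arc of the welded triangle has length in $[1,2]$: by Proposition~\ref{prop:QT-bdry-length} this event has positive finite $\QT$-mass, and on the welding side, for $W=2$ the joint boundary-length law is explicit via $\QT(2,2,2)=C\Md_{2,\bullet}(2)$ and \cite[Proposition 7.8]{AHS20}, making the corresponding integral manifestly finite. General $W$ is then handled not by any Laplace-transform computation but structurally: a three-piece welding with an extra weight $2-W$ (resp.\ $W-2$) disk realizes $\mathsf m_{W,W_0}$ as a conditional (resp.\ marginal) law of one interface given the other, via Theorems~\ref{thm:disk-welding} and~\ref{thm:disk+QT}, and conditional or marginal laws of a finite measure are finite. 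If you want to salvage your approach, you would at minimum have to replace $f$ by a functional that restricts a boundary length away from $0$ (as the paper's event does) rather than merely weighting by decaying exponentials, and you would still need a substitute for the explicit $\QT(2,2,2)$ length law when $W\neq2$.
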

\begin{proof}  {We first prove the claim for $W = 2$, then use that case to address general $W$.
\\
\textbf{Case $W = 2$:}} We first address the $W_0 < \frac{\gamma^2}2$ regime. By Definition~\ref{def-qt-thin}, ~\eqref{eqn:disk+QT-A0} holds for $W_3=\gamma^2-W_0$ instead of $W_3=W_0$, and the measure $\mathsf{m}$ is identical in the two settings. Thus, we can assume $W_3=\gamma^2-W_0$ instead. Consider the event $E$ where after welding, the boundary length of the edge between the weight $W$ and weight $W+2-W_0$ vertices in the weight $(W,W+2-W_0,\gamma^2-W_0)$ quantum triangle is between 1 and 2. By Proposition~\ref{prop:QT-bdry-length}  {the $\QT(W+W_1,W+W_2,W_3)$-law of this length is a power law, hence}   $\big(\QT(W+W_1,W+W_2,W_3)\otimes \mathsf{m}_{W,W_0}\big)[E]$ is a finite number times the size of $\mathsf{m}_{W, W_0}$. On the other hand, arguing exactly as in~\eqref{eq:disk+QT-0-1},
    \begin{equation}\label{eq:disk+QT-0-1-1}
    \begin{split}
        \Wd (\Md_{2}(W),\QT(0,2-W_0,\gamma^2-W_0) )= c\int_0^\infty \Wd\big(\QT(W,2,W;r), \QT(\gamma^2,2-W_0,\gamma^2-W_0;r)\big)\,dr.
        \end{split}
    \end{equation}
     {We now use inputs specific to $W=2$.} 
    Consider the disintegration of $\QT(2,2,2)$ over the three boundary lengths. Then on the right hand of~\eqref{eq:disk+QT-0-1-1}, the event $E$ has measure
    \begin{equation}\label{eq:disk+QT-0-2}
       c\int_1^2\int_0^\infty\int_0^\infty |\QT(2,2,2;\ell_1,\ell_2,r)|\cdot |\QT(\gamma^2,2-W_0,\gamma^2-W_0;r)|dr\,d\ell_2\,d\ell_1.
    \end{equation}
    By~\cite[Proposition 7.8]{AHS20} along with $\QT(2,2,2) = C\Md_{2,\bullet}(2)$, $|\QT(2,2,2;\ell_1,\ell_2,r)| = (\ell_1+\ell_2+r)^{-1-\frac{4}{\gamma^2}}$. By Proposition~\ref{prop:QT-bdry-length}, $|\QT(\gamma^2,2-W_0,\gamma^2-W_0;r)| = c_1 r^{\frac{2W_0}{\gamma^2}-1}$ for some $c_1\in(0,\infty)$. Therefore~\eqref{eq:disk+QT-0-2} is a constant times
    $$\int_1^2\int_0^\infty\int_0^\infty (\ell_1+\ell_2+r)^{-\frac{4}{\gamma^2}-1}r^{\frac{2W_0}{\gamma^2}-1}dr\,d\ell_2\,d\ell_1<\infty$$
    since $W_0<\frac{\gamma^2}{2}$. Therefore the measure $\mathsf{m}_{W, W_0}$ is finite.

    For the $W_0>\frac{\gamma^2}{2}$ regime, the argument is identical except that we do not replace $W_0$ with $\gamma^2-W_0$, and we use $|\QT(\gamma^2,2-W_0, W_0;r)| = c_2r^0=c_2$ for some $c_2\in(0,\infty)$.
    \medskip 

    \noindent 
    \textbf{Case $W <  2$:}
    Consider the conformal welding 
    $$\iint_{\bbR_+^2}\Wd\bigg(\Md_{ 2}(2-W;\ell_1),\Md_{2}(W;\ell_1,\ell_2),\QT(0,2-W_0,W_0;\ell_2)\bigg)\,d\ell_1d\ell_2.$$ 
    By Theorem~\ref{thm:disk-welding}, when embedded in a domain $D$, the  two interfaces $(\eta_{\mathrm{L}},\eta_\mathrm{R})$ can be realized by (i) sampling   $\eta_\mathrm{R}$ from $\mathsf{m}_{2,W_0}$ as the interface on the right and (ii) sampling $\eta_{\mathrm{L}}$ as an $\SLE_\kappa(-W;W-2)$ curve in the connected component $D\backslash\eta_{\mathrm{R}}$ to the left hand side of $\eta_{\mathrm{R}}$. Then the joint law of $(\eta_{\mathrm{L}},\eta_\mathrm{R})$  is clearly finite, and $\mathsf{m}_{W, W_0}$ describes the conditional law of $\eta_\mathrm{R}$ given $\eta_\mathrm{L}$, which is clearly finite. 
    
    \medskip 
    \noindent \textbf{Case $W > 2$:}
    Consider the conformal welding  $$\iint_{\bbR_+^2}\Wd\bigg(\Md_{ 2}(W-2;\ell_1),\Md_{2}(2;\ell_1,\ell_2),\QT(0,2-W_0,W_0;\ell_2)\bigg)\,d\ell_1d\ell_2.$$ By Theorem~\ref{thm:disk+QT},   the  two interfaces $(\eta_{\mathrm{L}},\eta_\mathrm{R})$ can be realized by (i) sampling   $\eta_\mathrm{L}$ from $\SLE_\kappa(W-4;2-W_0,W_0-2)$ as the interface on the left and (ii) sampling   $\eta_\mathrm{R}$ from $\mathsf{m}_{2,W_0}$ in the connected component $D\backslash\eta_{\mathrm{L}}$ to the right hand side of $\eta_{\mathrm{L}}$. Then $\mathsf{m}_{W, W_0}$ is the marginal law of  $\eta_\mathrm{R}$, which is also finite. This completes the proof. 
\end{proof}

%\begin{proof}[Proof of Proposition~\ref{prop:disk+QT-0}]    By Lemma~\ref{lem:prop:disk+QT-0} and Lemma~\ref{lem:prop:disk+QT-1}, there exists a probability measure $\mathsf{m}_W$  such that~\eqref{eqn:disk+QT-A0} holds. Using the identical argument as in the proof of~\cite[Proposition 3.6]{ASYZ24}, $\mathsf{m}_W = \SLE_\kappa(W-2;-W_0,W_0-2)$. \end{proof}

\begin{proof}[Proof of Theorem~\ref{thm:weld-0}]
    We start with the case $W>\frac{\gamma^2}{2}$. 
    Consider the setting of Proposition~\ref{prop:disk+QT-0} with $W_0=W$.
    Embed the entire surface as $(\mathcal{S},\phi,0,-\infty,+\infty)$ such that the weight $(2,W,W)$ vertices are located at $(+\infty,-\infty,0)$, and the interface $\eta$ starts from $+\infty$ and targets $0$, with force points at $(+\infty)^\pm$ and $-\infty$. Note that $\eta$ merges with $(-\infty,0)$ at some point $x_\eta<0$ before hitting $0$. By~\eqref{eq:disk+QT-0-1}, $(\mathcal{S},\phi,0,-\infty,+\infty)/{\sim_\gamma}$ has the same law as
    \begin{equation}
        c\int_0^\infty \Wd\big(\QT(W,2,W;r), \QT(\gamma^2,2-W,W;r)\big)\,dr.
    \end{equation}
    Let $D_\eta$ be the connected component of $\mathcal{S}\backslash\eta$ with $-1$ on the boundary, and  $\psi_\eta:D_\eta\to\mathcal{S}$ be a conformal map sending $(+\infty,0,x_\eta)$ to $(+\infty,0,-\infty)$. Let $\varphi_\eta: \mathcal{S}\backslash D_\eta\to\mathcal{S}$ be the conformal map sending $(+\infty,-\infty,x_\eta)$   to $(-\infty,0,+\infty)$. We set \eqb\label{eq:pf-thm-weld-000} X = \phi\circ\psi_\eta^{-1}+Q\log|(\psi_\eta^{-1})'|, \ \ Y = \phi\circ\varphi_\eta^{-1}+Q\log|(\varphi_\eta^{-1})'|.\eqe Then given the interface length $r$, the law of $X$ is a constant times $\LF_{\mathcal{S},r}^{(\beta,-\infty),(\gamma,+\infty),(\beta,0)}$ as in Lemma~\ref{lm:bdry-length-law}, where $\beta = \gamma+\frac{2-W}{\gamma}$. Now we weight the law of $(\phi,\eta)$ by $\e^{\frac{(2Q-\beta)^2-\beta^2}{4}}e^{\frac{(2Q-\beta)-\beta}{2}X_\e(0)}$. Using the identical argument as in the proof of~\cite[Proposition 4.5]{AHS21} with~\cite[Lemma 4.6]{AHS21} as input,   as we send $\e\to 0$,  we see that for $(\phi,\eta)\sim \LF_{\mathcal{S}}^{(\beta,-\infty),(\gamma,+\infty),(2Q-\beta,0)} \times \SLE_\kappa(W-2;-W,W-2)$, the pairs $(X,Y)$ defined in~\eqref{eq:pf-thm-weld-000} has the law
    $$c\int_0^\infty \LF_{\mathcal{S},r}^{(\beta,-\infty),(\gamma,+\infty),(2Q-\beta,0)}(dX)\times \LF_{\mathcal{S},r}^{(2\gamma+\frac{2}{\gamma}-\beta,-\infty), (\frac{2}{\gamma},+\infty),(\beta,0)}(dY)\,dr.$$
    Note that we have $\Delta_\beta=\Delta_{2Q-\beta}$ and there is no weighting on the law of $\eta$.
    Further by conformal covariance of Liouville fields~\cite[Lemma 2.9, Lemma 2.11]{ASY22} along with Proposition~\ref{prop:3ptwedge}, for $X \sim \LF_{\mathcal{S}}^{(\beta,-\infty),(\gamma,+\infty),(2Q-\beta,0)}$  the quantum surface $(\mathcal S, X, -\infty, 0, + \infty)$ has law  $c\mathcal{M}_{2,\bullet}^{\mathrm{wedge}}(W)$ for some constant $c_0$.  {Thus, by Definition~\ref{def:3ptwedge-0}, for $X \sim c_0^{-1}\LF_{\mathcal{S}, r}^{(\beta,-\infty),(\gamma,+\infty),(2Q-\beta,0)}$  the quantum surface $(\mathcal S, X, -\infty, 0, + \infty)$ is a sample from $\mathcal M_{2}^\textup{wedge}(W)$ with a third marked point added at quantum boundary length $r$ from the first marked point.} % and $\LF_{\mathcal{S},r}^{(\beta,-\infty),(\gamma,+\infty),(2Q-\beta,0)}$ equals the measure $\mathcal{M}_{2,\bullet}^{\mathrm{wedge}}(W)$ with $u=r$ in Definition~\ref{def:3ptwedge-0}, i.e., the surface obtained by marking the point on the boundary of a weight $W$ quantum wedge with distance $r$ to the finite endpoint. 
    This verifies~\eqref{eq:thm-weld-0} for $W>\frac{\gamma^2}{2}$. 

    For $W<\frac{\gamma^2}{2}$, by~\cite[Proposition 4.2 and Proposition 4.4]{AHS20}, for some constants $c,c'$,
    \begin{align}
    \label{eq-marked-decomp-1}
   & \Md_{2,\bullet}(W) = c\Md_2(W)\times\Md_{2,\bullet}(\gamma^2-W)\times \Md_2(W), \\ \label{eq-marked-decomp-2} &\mathcal{M}_{2,\bullet}^{\mathrm{wedge}}(W) = c'\Md_2(W)\times\Md_{2,\bullet}(\gamma^2-W)\times \mathcal{M}_2^{\rm wedge}(W).
    \end{align}
    Consider the setting of Proposition~\ref{prop:disk+QT-0} with $W_0 = W$. 
   Using $\QT(W,2,W)=c\Md_{2,\bullet}(W)$ and~\eqref{eq-marked-decomp-1}, the welded surface can be viewed as concatenation of surfaces $(S_1,S_2,S_3)$ from $c\Md_2(W)\times (\Md_{2,\bullet}(\gamma^2-W)\otimes \SLE_\kappa(W-2;-W,W-2))\times \Md_2(W)$, where $S_3$ is concatenated with $S_2$ at the target point of the interface. Using~\eqref{eq-marked-decomp-2}, the claim is then immediate once we replace $S_3$ with a weight $W$ quantum wedge.
\end{proof}

\section{The constant $p$ and LCFT}\label{sec:p-lcft}
In this section, we express the constant $p$ via the angle $\theta$ and the area of quantum triangles. Throughout the section, fix $\gamma\in(0,2)$, $\kappa=\gamma^2$, $\chi=\frac{2}{\gamma}-\frac{\gamma}{2}$ and $\kappa'=\frac{16}{\kappa}$. In Section \ref{sec:p-contour}, we review the setup of the contour decomposition of the mating of trees and the explicit description of the $p$-$\theta$ relation as in \cite{GHS16}. In Section \ref{sec:p-quantum-area}, we express the constant $p$ in terms of the quantum area of the space-filling SLE$_{\kappa'}$ curve $\eta'$ and further express $p$ in terms of the quantum area of quantum triangles as in Lemma~\ref{lem:p-qt-A} by applying the conformal welding in Theorem \ref{thm:weld-0}. In Section~\ref{subsec:area-lcft}, we calculate the quantum area of these quantum triangles using exact formulas for three-point functions in LCFT as derived in~\cite{arsz-structure-constants}. Finally in Section~\ref{subsec:p-theta-result}, we derive the formula for $p_\gamma(\theta)$.

\subsection{The contour decomposition}\label{sec:p-contour}

We review the contour decomposition and the setup of the $p$-$\theta$ relation as in \cite[Section 3]{GHS16}. Let $(\mathbb{C}, h, 0, \infty)$ be a weight $4-\gamma^2$ quantum cone and $\widehat{h}$ be an independent whole plane GFF. Let $\eta'$ be the north-going space-filling  counterflowline of $\wh h$ introduced in Section~\ref{sec:pre-ig}, parameterized by the $\gamma$-quantum area induced by $h$ and normalized such that $\eta'(0)=0$. %Then for $w\in \mathbb{C}$, the left and right boundaries of  $\eta'$ stopped at $w$ are angle $\theta+\frac{\pi}{2}$ and $\theta-\frac{\pi}{2}$ flow lines started from $w$. 
For $t\in \bbR$, let $L_t$ (resp.\ $R_t$) be change of the $\gamma$-quantum length of the left (resp.\ right) boundary of $\eta'((-\infty,t])$ with respect to time 0. By \cite[Theorem 1.9]{DMS14}, the process $Z = (L, R)$ is a Brownian motion with    \eqb\label{eqn:mot-covariance} \mathrm{Cov}(L_t,R_t) = -\cos(\frac{\pi\gamma^2}{4})\mathbf{a}^2|t|, \ \ \ \ \mathrm{Var}(L_t)=\mathrm{Var}(R_t) = \mathbf{a}^2|t|.\eqe
The variance \eqb\label{eq:mot-var-a} \mathbf{a} = \sqrt{2}\sin(\frac{\pi\gamma^2}{4})^{-1/2}\eqe
is  computed in~\cite{AGS21}.

Now fix the angle $\theta\in (-\frac{\pi}{2},\frac{\pi}{2})$ and consider the angle $\theta$ flow line $\eta_\theta$ of $\widehat{h}$ started from 0. For $t\ge0$, we say that \emph{$\eta'$ crosses $\eta_\theta$} at time $t$ if for any $\e>0$, one can find $s_1,s_2\in (t-\e, t+\e)$ such that $\eta'(s_1)$ and $\eta'(s_2)$ lie on different sides of $\eta_\theta$. Then we define 
$$\mathcal{A}:=\{t\ge 0:\eta' \ \text{crosses}\ \eta_\theta\ \text{at time}\ t\}.$$

The starting point is the following contour decomposition.
\begin{proposition}[Proposition 3.2 of \cite{GHS16}]\label{prop:contour}
For $s\ge0$, let $\tau_s$ (resp.\ $\tilde{\tau}_s$) be the smallest $t\ge s$ (resp.\ largest $t\le s$) for which $t\in\mathcal{A}$. Let $E_s$ be the event that $\eta'(s)$ lies to the left of $\eta_\theta$ and 
\begin{equation}\label{eqn:def-X_s}
    X_s:=(R_s-R_{\tilde{\tau}_s})\mathds{1}_{E_s} - (L_s-L_{\tilde{\tau}_s})\mathds{1}_{E_s^c}, \ \ s\ge 0. 
\end{equation}
There is a deterministic constant $p=p_\gamma(\theta)\in (0,1)$ depending only on $\theta$ and $\gamma$ such that the following holds. The function $|X|$ has the law of $\mathbf{a}$ times a standard reflected Brownian motion and 
$$\mathcal{A}=X^{-1}(0)=\{\tau_s,\tilde{\tau}_s:s\ge 0\}.$$
Moreover, given $\mathcal{A}$, the conditional law of the excursions $\{(Z-Z_{\tilde{\tau}_s})|_{[\tilde{\tau}_s,\tau_s]}:s\ge 0\}$ is that of a collection of independent random paths. Each path is equal with probability $p$ (resp.\ $1-p$) to a Brownian motion with covariance as in \eqref{eqn:mot-covariance} started from 0 conditioned to stay in the upper (resp.\ right) half plane for $\tau_s-\tilde{\tau}_s$ units of time and to exit the upper (resp.\ right) half plane at time $\tau_s-\tilde{\tau}_s$.
\end{proposition}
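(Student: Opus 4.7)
The plan is to combine the flow-line interaction theory from imaginary geometry with the Brownian description of $(L,R)$ from mating-of-trees. The basic geometric picture is that between two consecutive times in $\mathcal{A}$, the space-filling curve $\eta'$ stays on one side of $\eta_\theta$, and during such an interval $\eta_\theta$ forms part of the outer boundary of $\eta'((-\infty, s])$ on the opposite side. More precisely, if $\eta'(s)$ lies to the left of $\eta_\theta$ for $s\in[\tilde\tau_s, \tau_s]$, I would argue that the sub-arc of the right boundary of $\eta'((-\infty, s])$ running along $\eta_\theta$ from $\eta'(\tilde\tau_s)$ has quantum length exactly $R_s - R_{\tilde\tau_s} = X_s$; the analogous statement holds when $\eta'(s)$ lies to the right of $\eta_\theta$. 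This is established via a boundary-length accounting argument using that the portion of the right boundary which is not on $\eta_\theta$ is ``pinned'' at time $\tilde\tau_s$ and so does not contribute to the increment $R_s - R_{\tilde\tau_s}$ during the excursion.

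With this identification the equality $\mathcal{A} = X^{-1}(0)$ follows. One direction is immediate from the definitions, and the converse amounts to showing that $X_s = 0$ forces $\eta'(s)$ to lie on $\eta_\theta$, which by the flow-line crossing/bouncing trichotomy of \cite{MS17} applied to flow lines of distinct angles implies that $\eta'$ crosses $\eta_\theta$ at time $s$. To determine the law of $|X|$, I would combine the identification above with the scale invariance of both the $\gamma$-quantum cone and the whole-plane GFF, which forces $|X|$ to be scale invariant as a process; combined with the Brownian law of $(L, R)$ this forces $|X|$ to be a constant multiple of reflected Brownian motion, and the variance $\mathrm{Var}(R_t) = \mathbf{a}^2 t$ fixes the constant to be $\mathbf{a}$.

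For the excursion decomposition, the strong Markov property of $(L, R)$ applied at the successive times $\tilde\tau_s$ yields that, conditional on $\mathcal{A}$, the excursions $\{(Z - Z_{\tilde\tau_s})|_{[\tilde\tau_s, \tau_s]}\}$ are independent. Each excursion is of one of two geometric types, according to whether $\eta'$ spends it to the left or to the right of $\eta_\theta$; the constraint that $\eta_\theta$ must form the entire opposite-side boundary during the excursion translates, via the identification in the first paragraph, into the requirement that the corresponding component of $Z - Z_{\tilde\tau_s}$ stays positive for the duration $\tau_s - \tilde\tau_s$. This gives exactly the conditioned Brownian law in the upper, respectively right, half plane. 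The number $p = p_\gamma(\theta)$ is then the probability that a given excursion is of left type; by the cone scaling argument this probability is the same for every excursion and depends only on $(\gamma, \theta)$.

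The main obstacle is the geometric identification in the first paragraph, i.e., rigorously matching the quantum length of the portion of $\eta_\theta$ traced by $\eta'$ since the last crossing with the boundary-length increment $R_s - R_{\tilde\tau_s}$ (or $L_s - L_{\tilde\tau_s}$). This requires a careful decomposition of the outer boundary of the space-filling curve into pieces of $\eta_\theta$ and pieces of SLE-type curves, together with the merging, crossing and bouncing behavior of flow lines from \cite{MS16a, MS17}. In particular one must rule out pathological scenarios in which $\eta'$ ``peels off'' previously accumulated pieces of $\eta_\theta$ from its outer boundary mid-excursion, since this would break the clean monotone identification of $X_s$ with a half-plane conditioned Brownian path.
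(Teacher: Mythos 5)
First, a point of comparison: the paper does not prove this statement at all --- it is imported verbatim as \cite[Proposition 3.2]{GHS16}, so the ``paper's proof'' is a citation, and your sketch has to be judged as a standalone reconstruction of the GHS16 argument. Its overall shape (identify $X_s$ with a quantum boundary length created since the last crossing, decompose into excursions, use scale invariance to get a single constant $p$) is indeed the right shape, but the shortcuts you substitute for the actual work do not hold up, and the missing input is precisely the structural machinery the original proof runs on.

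Two genuine gaps. (1) The independence of excursions and the reflected-Brownian law of $|X|$ cannot be extracted the way you propose. The times $\tilde\tau_s$ are last-crossing (backward) times, not stopping times in any natural filtration of $Z=(L,R)$, and $\mathcal{A}$ is not even a function of $Z$ alone --- it depends on $\eta_\theta$, hence on $\widehat h$ --- so ``the strong Markov property of $(L,R)$ applied at the successive times $\tilde\tau_s$'' is unavailable; moreover $\mathcal{A}$ is a.s.\ a perfect set, so there are no ``successive'' excursion times to induct along. Similarly, scale invariance together with the Brownian law of $(L,R)$ does not force $|X|$ to be a multiple of reflected Brownian motion: one must first know that inside each excursion the relevant coordinate of $Z-Z_{\tilde\tau_s}$ stays strictly positive and has the conditioned correlated-Brownian law, and that the excursions, indexed by a local time on $\mathcal{A}$, form a Poisson point process with scale-invariant intensity; only then can one identify the intensity with a multiple of It\^o measure and read off a deterministic marking probability $p$. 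In \cite{GHS16} these facts come not from $(L,R)$ alone but from the surface decomposition: cutting the weight $4-\gamma^2$ quantum cone along $\eta_\theta$ yields two \emph{independent} quantum wedges of weights $W^{\mathrm{L}},W^{\mathrm{R}}$ (the cutting/welding theorem of \cite{DMS14}), the space-filling curve restricted to each wedge is again a space-filling SLE to which the mating-of-trees theorem applies, and the flow-line interaction rules of \cite{MS16a,MS17} identify the bubbles filled during excursions. Your sketch never invokes this input, and without it the conditional excursion law, the independence, and the constancy of $p$ are unsupported. (2) You correctly flag the boundary-length identification as ``the main obstacle,'' but note that the remaining steps quietly rely on it: for instance, your argument that $X_s=0$ forces $s\in\mathcal{A}$ implicitly uses strict positivity of the increment in the interior of an excursion, which is exactly the flagged identification (positivity of the quantum length of a nondegenerate boundary arc), so as written that part of the argument is circular. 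In short, the proposal is a reasonable outline, but the points where it is vague or incorrect are exactly the points where the real proof needs the wedge-cutting and within-wedge mating-of-trees machinery.
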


From the description above, the constant $p_\gamma(\theta)$ in Proposition~\ref{prop:contour} equals the probability where $X_1>0$, and further equals the probability that $\eta'(1)$ lies on the left hand side of $\eta_\theta$ as in Theorem~\ref{thm:main}. 

\begin{figure}[ht]
	\centering
	\includegraphics[width=0.66\textwidth]{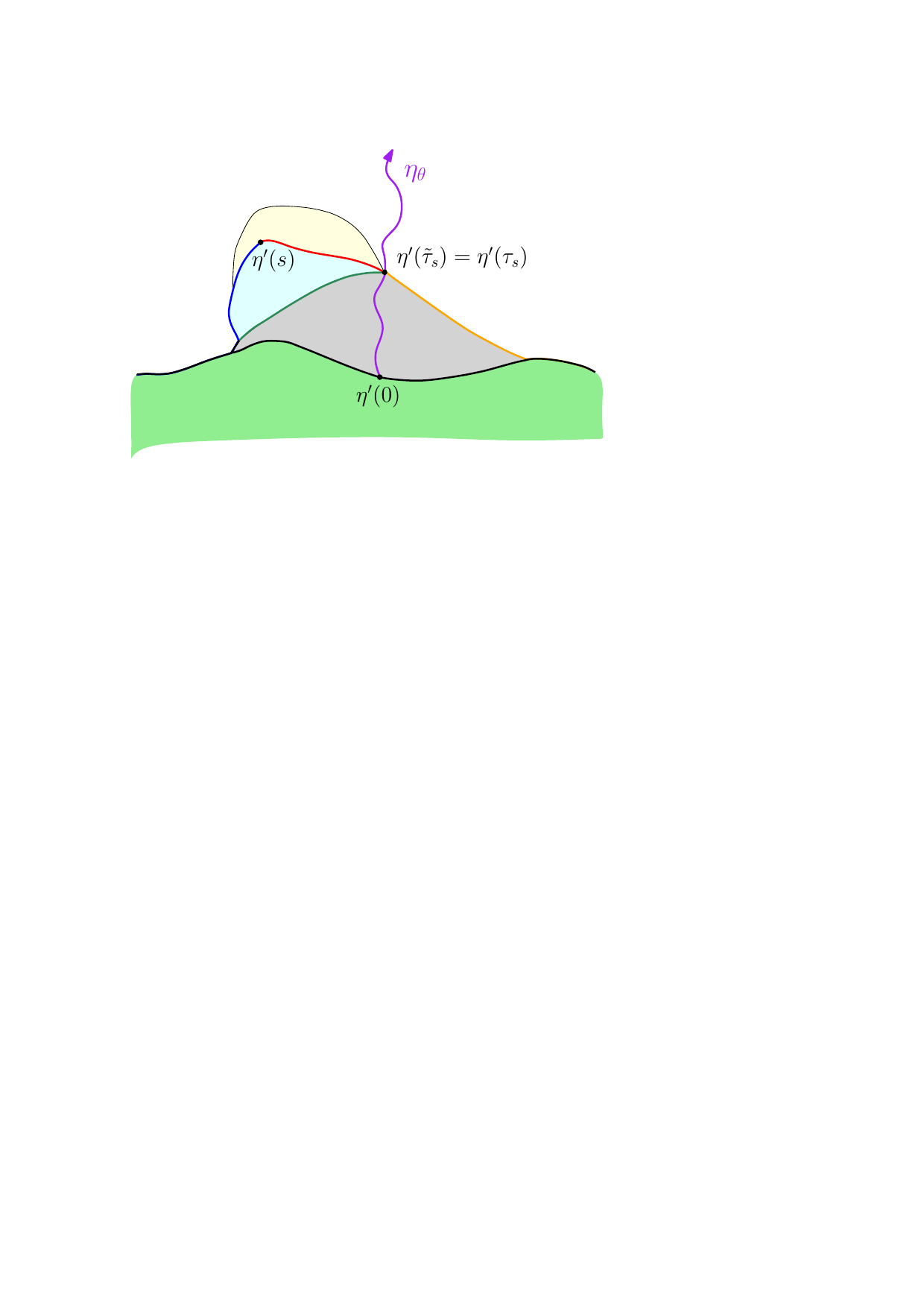}
	\caption{An illustration of Proposition \ref{prop:contour}. Fix $s>0$. The space-filling flow line $\eta'$ generated by the GFF $\hat{h}$, drawn on the top of an independent weight $4-\gamma^2$ quantum cone, first traces the green region during the time $(-\infty,0]$, then the gray region for time $t\in [0,\tilde{\tau}_s]$, and the blue region for time $t\in [\tilde{\tau}_s, s]$ and finally the yellow region for $t\in [s,\tau_s]$. Here, $\tilde{\tau}_s$ (resp.\ ${\tau}_s$) is the last time before (resp.\ first time after) $s$ when $\eta'$ crosses  $\eta_{\theta}$. By definition,  $L_s-L_{\tilde{\tau}_s}$ is the quantum length of the blue segment minus the quantum length of the green segment, while $R_s-R_{\tilde{\tau}_s}=X_s$ is the quantum length of the red segment. It has been shown in \cite[Lemma 3.6]{GHS16} that a.s.\ $\eta'(\tilde{\tau}_s) = \eta'({\tau}_s)$. }\label{fig-contour-decomposition}
\end{figure}

Based on \cite[Lemma 3.17]{GHS16}, it has been proved in \cite{borga2021skewperm} that the process $(X_s)_{s\ge 0}$ solves the following SDE
\begin{equation}\label{eqn:X-sde}
    dX_s = \mathds{1}_{X_s>0}dR_s - \mathds{1}_{X_s<0}dL_s + (2p_\gamma(\theta)-1)d\mathcal{L}_s; \ \ s\ge 0
\end{equation}
where $(\mathcal{L}_s)_{s\ge0}$ is the local time of $|X|$ at 0. Moreover, the solution to \eqref{eqn:X-sde} a.s.\ exists and is unique. Based on this, $X$ has the law of a skew Brownian motion with parameter $p$ as in \cite{Lej06}. Moreover, we have
\begin{proposition}[Proposition 3.3 of \cite{GHS16}]\label{prop:X-local-time}
In the setting of Proposition \ref{prop:contour} and \eqref{eqn:X-sde}, there exists a deterministic constant $c_\gamma(\theta)>0$, such that a.s.\ for every $t\ge 0$, $\frac{1}{2}\mathcal{L}_t$ is $c_\gamma(\theta)$ times the quantum length of $\eta_\theta\cap\eta'([0,t])$.
\end{proposition}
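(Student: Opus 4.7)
The plan is to show that $\ell_t := \nu_h(\eta_\theta \cap \eta'([0,t]))$ is a continuous, non-decreasing process supported on $\mathcal{A}=X^{-1}(0)$, and then to identify $\ell_t$ with a deterministic multiple of the local time $\mathcal{L}_t$ of $|X|$ at $0$.

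\emph{Good definition and support.} Continuity and monotonicity of $\ell$ follow from the continuity of $\eta'$, the monotonicity of $t\mapsto \eta'([0,t])$, and the a.s.\ non-atomicity of $\nu_h$ along the flow line $\eta_\theta$; the quantum length on $\eta_\theta$ is well-defined by combining imaginary geometry with boundary GMC theory. Since $\eta'(t)\in\eta_\theta$ forces $t\in\mathcal{A}$ (outside $\mathcal{A}$, the space-filling curve is strictly on one side of $\eta_\theta$ at time $t$, so $\ell$ does not grow there), the measure $d\ell$ is supported on $\mathcal{A}$.

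\emph{Proportionality with a deterministic constant.} The key structural input is the strong Markov property of the decorated cone $(\bbC,h,0,\infty,\wh h)$ at suitable stopping times approaching $\mathcal{A}$ such as the $\tau_s$ of Proposition~\ref{prop:contour}, combined with the scale invariance of the weight $4-\gamma^2$ quantum cone from Proposition 4.13 of \cite{DMS14}. Together they imply that $d\ell_t$ and $d\mathcal{L}_t$ are both continuous additive functionals of $X$ supported on the zero set and obeying the same Brownian-type scaling. By the uniqueness, up to a multiplicative constant, of continuous additive functionals of a regular one-dimensional Markov process supported on its zero set, one has $\ell_t = c\,\mathcal{L}_t$ for some positive random $c$; a zero-one law for the scale-invariant tail of $(h,\wh h)$ then forces $c$ to be deterministic, and we set $2c_\gamma(\theta):=c$.

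The main obstacle is making the regeneration step rigorous. Since $\mathcal{A}$ is an uncountable Cantor-like set and $\ell$ only accrues on $\mathcal{A}$, one cannot naively enumerate crossing times and apply the strong Markov property at each. The cleanest route I would follow is via Ito's excursion theory for $|X|$: identify the Revuz measure of $d\ell$ in terms of a Poisson intensity over SLE-style excursions of $\eta'$ between consecutive crossings of $\eta_\theta$, and read off proportionality to $d\mathcal{L}$ from the factorization of this intensity together with the scale invariance of the quantum cone. Computing the explicit value of $c_\gamma(\theta)$ is then deferred to the computations of Section~\ref{sec:p-lcft} involving the conformal welding from Theorem~\ref{thm:weld-0} and the LCFT boundary three-point function of~\cite{arsz-structure-constants}.
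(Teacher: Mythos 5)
You should note at the outset that the paper does not prove this statement at all: it is imported verbatim as Proposition 3.3 of \cite{GHS16}, so there is no internal argument to compare against and your sketch has to stand on its own as a reconstruction of the proof in \cite{GHS16}. Your overall shape is the right one: both $\frac{1}{2}\mathcal{L}_t$ and $\ell_t:=\nu_h\big(\eta_\theta\cap\eta'([0,t])\big)$ are continuous, nondecreasing, grow only on $\mathcal{A}=X^{-1}(0)$, and should be identified up to a deterministic constant using the Markov and scaling structure of the decorated quantum cone. (Even the support claim is slightly glossed: that $\ell$ does not grow during an excursion uses that the bubble filled by $\eta'$ between consecutive crossing times meets $\eta_\theta$ only at a single pinch point, i.e.\ $\eta'(\tilde\tau_s)=\eta'(\tau_s)$ a.s., which is \cite[Lemma 3.6]{GHS16} and not merely the observation that $\eta'(t)\notin\eta_\theta$ off $\mathcal{A}$.)

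The genuine gap is in your central identification step. The uniqueness theorem for continuous additive functionals supported on the zero set applies to additive functionals \emph{of the Markov process $X$}: processes adapted to the filtration of $X$ and additive under its time shifts. But $\ell$ is a functional of the decorated surface $(h,\widehat h)$, not of $X$; given $X$ it a priori carries extra randomness, and the operation under which $\ell$ is "additive" is re-rooting of the quantum cone at $\eta'(s)$ (a measure-preserving operation only at quantum-typical times, and one which rescales lengths), not the shift of the one-dimensional path $X$. So the CAF uniqueness theorem cannot be invoked as stated; at best it identifies $\bbE[d\ell\mid X]$ with a multiple of $d\mathcal{L}$, which is strictly weaker than the a.s.\ statement, and your fallback via Itô excursion theory runs into the same measurability problem, since $d\ell$ is carried by the zero set itself rather than by individual excursions. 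Likewise, "a zero-one law for the scale-invariant tail of $(h,\widehat h)$" is asserted rather than argued: the putative ratio $c$ is a germ quantity at time $0$, and making it deterministic requires explicitly invoking the ergodicity of the scaling action on the weight $4-\gamma^2$ quantum cone \cite[Proposition 4.13]{DMS14}, not a generic tail triviality. A clean repair, close in spirit to \cite{GHS16}, is to parameterize $\eta_\theta$ by quantum length (equivalently, to work at inverse local time points): re-rooting invariance of the quantum cone at these special points shows that the nondecreasing process $t\mapsto\mathcal{L}_{\sigma_t}$ has stationary, independent increments, and it is continuous because $\ell$ does not grow during excursions; a continuous nondecreasing process with stationary independent increments is a.s.\ linear with deterministic slope, which gives the proposition directly, with the deterministic constant coming out automatically and no separate zero-one law needed. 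The explicit value of $c_\gamma(\theta)$ is indeed a separate matter, obtained in Section~\ref{sec:p-lcft} from Theorem~\ref{thm:weld-0} and the LCFT input, as you say.
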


As explained after~\cite[Proposition 3.3]{GHS16}, the reason for the factor of $\frac{1}{2}$ is so that $(\frac{1}{2}\mathcal{L}_t)_{t\geq0}$ is the local time of $X$ at 0 when $\theta=0$, in which case $X$ is a Brownian motion.

The following monotonicity is clear from the fact that $p_\gamma(\theta)$ is the probability $\eta'(1)$ lies to the left of $\eta_\theta$.  %Proposition~\ref{prop:contour}.
\begin{lemma}\label{lem:dec}
    The constant $p_\gamma(\theta)$ is decreasing for $\theta\in(-\frac{\pi}{2},\frac{\pi}{2})$.
\end{lemma}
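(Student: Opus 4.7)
The plan is a short coupling argument based on the monotonicity of imaginary geometry flow lines in their angle parameter. I would sample the pair $(h,\wh h)$ jointly once, and then construct $\eta'$ together with the entire family $\{\eta_\theta\}_{\theta\in(-\pi/2,\pi/2)}$ as measurable functions of $(h,\wh h)$, with all $\eta_\theta$ started from $0$ and coupled to the same realization of the whole-plane GFF $\wh h$.

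The key input is that, by the whole-plane imaginary geometry of~\cite{MS17} (compare Theorem~1.5 of~\cite{MS16a}), for $-\tfrac{\pi}{2}<\theta_1<\theta_2<\tfrac{\pi}{2}$ the flow lines $\eta_{\theta_1}$ and $\eta_{\theta_2}$ almost surely do not cross, with $\eta_{\theta_2}$ lying weakly to the left of $\eta_{\theta_1}$. This matches the convention $e^{i(\wh h/\chi+\theta)}$ for the local direction of a flow line: increasing $\theta$ rotates the local direction counterclockwise, so the flow line bends further to the left. As a consequence, the region in $\bbC$ lying to the left of $\eta_{\theta_2}$ is contained in the region to the left of $\eta_{\theta_1}$.

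Applying this to the random point $\eta'(1)$, the event $\{\eta'(1)\text{ lies to the left of }\eta_{\theta_2}\}$ is contained in $\{\eta'(1)\text{ lies to the left of }\eta_{\theta_1}\}$. Taking probabilities yields $p_\gamma(\theta_2)\le p_\gamma(\theta_1)$, which is the desired monotonicity. If strict monotonicity were needed, one could additionally observe that the two flow lines are almost surely distinct, so the region strictly between them has positive Lebesgue (hence positive quantum) area, and since $\eta'$ is parametrized by the $\gamma$-quantum area of the independent field $h$, the point $\eta'(1)$ lands in this region with strictly positive probability. The argument is essentially structural and involves no new computation; the only care needed is to correctly invoke the whole-plane version of the flow-line monotonicity and to verify that the direction of the ordering agrees with the ``left of $\eta_\theta$'' convention used in the definition of $p_\gamma(\theta)$.
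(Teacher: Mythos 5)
Your proof is correct and is essentially the paper's own argument: both rest on the imaginary-geometry fact that flow lines of the same field are ordered by angle (larger angle lies to the left), so the ``left of $\eta_\theta$'' regions are nested and $p_\gamma(\theta)$ is monotone decreasing. Your extra remarks on the whole-plane version of the monotonicity and on strict monotonicity are fine but not needed, since the paper only uses weak monotonicity of $p_\gamma$.
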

\begin{proof}
   By~\cite[Theorem 1.5]{MS16a}, if $\frac{\pi}{2}>\theta_1>\theta_2>-\frac{\pi}{2}$, then $\eta_{\theta_1}$ is lying on the left of $\eta_{\theta_2}$. Therefore    $p_\gamma(\theta)$ is decreasing for $\theta\in(-\frac{\pi}{2},\frac{\pi}{2})$.
\end{proof}

\subsection{The constant $p$ in terms of quantum area}\label{sec:p-quantum-area}

Recall the setting of Propositions \ref{prop:contour} and \ref{prop:X-local-time}. For $t>0$, let $\sigma_t$ be the first time when the quantum length of $\eta_\theta\cap\eta'([0,t])$ is $t$, and $A_t^\rmL$ (resp.\ $A_t^\rmR$) be the corresponding quantum area of the part of $\eta'([0,\sigma_t])$ lying on the left (resp.\ right) of $\eta_\theta$. By Proposition \ref{prop:X-local-time}, a.s.\ for all $t>0$ we have $\mathcal{L}_{\sigma_t} = 2c_\gamma(\theta)t$.

\begin{proposition}\label{prop:p-quantum-area}
Consider  the setting of Proposition \ref{prop:contour}. Let $\mathbf{a}$ be given in~\eqref{eq:mot-var-a} and let the constants $p_\gamma(\theta)$ and $c_\gamma(\theta)$ be as in Proposition~\ref{prop:contour} and Proposition~\ref{prop:X-local-time}. For any $\mu>0$, we have
\begin{equation}\label{eqn:p-quantum-area}
     \mathbb{E}[\exp(-\mu A_1^\rmL)] = \exp\big(-\bfa^{-1} c_\gamma(\theta)p_\gamma(\theta)\sqrt{2\mu}\big), \ \ \mathbb{E}[\exp(-\mu A_1^\rmR)] = \exp\big(-\bfa^{-1} c_\gamma(\theta)(1-p_\gamma(\theta))\sqrt{2\mu}\big).
\end{equation}
\end{proposition}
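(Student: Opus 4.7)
The plan is to express $A_1^\rmL$ and $A_1^\rmR$ as occupation times of the skew Brownian motion $X$ up to an inverse local time, and then apply It\^o's excursion theory for skew Brownian motion to compute their Laplace transforms.

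First, since $\eta'$ is parametrized by $\gamma$-quantum area and the event $E_s$ from Proposition~\ref{prop:contour} coincides with $\{X_s>0\}$ up to a Lebesgue null set, I would write $A_1^\rmL = \int_0^{\sigma_1}\mathds{1}_{X_s>0}\,ds$ and $A_1^\rmR = \int_0^{\sigma_1}\mathds{1}_{X_s<0}\,ds$. By the definition of $\sigma_1$ together with Proposition~\ref{prop:X-local-time}, $\sigma_1$ is the first $s$ with $\mathcal{L}_s = 2c_\gamma(\theta)$, i.e.\ an inverse local time of $|X|$ at $0$. Setting $\tilde X_s := \mathbf{a}^{-1}X_s$, which by Proposition~\ref{prop:contour} is a standard skew Brownian motion with parameter $p := p_\gamma(\theta)$, and writing $L^{\tilde X}$ for its symmetric local time at $0$, careful normalization tracking (using that $\tfrac{1}{2}\mathcal{L}_t$ is the semimartingale local time of $X$ at $0$ in the Brownian case $\theta=0$) gives $\mathcal{L}_t = 2\mathbf{a}\,L^{\tilde X}(t)$. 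Consequently $\sigma_1 = \tau^{\tilde X}_{\ell}$ with $\ell := c_\gamma(\theta)\mathbf{a}^{-1}$, where $\tau^{\tilde X}_\ell := \inf\{s : L^{\tilde X}(s) > \ell\}$.

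Next, by It\^o's excursion theory for skew Brownian motion, the positive excursions of $\tilde X$ away from $0$ form a Poisson point process indexed by local time with intensity $p$ times the standard Brownian excursion measure $\mathbf{n}$, and the negative excursions form an independent Poisson point process of intensity $1-p$. Since $\mathbf{n}(|\epsilon|\in dt) = \tfrac{1}{\sqrt{2\pi t^3}}\,dt$ and hence $\int_0^\infty(1-e^{-\mu t})\,\mathbf{n}(|\epsilon|\in dt) = \sqrt{2\mu}$, Campbell's theorem yields
\[
\mathbb{E}\!\left[\exp\!\left(-\mu\int_0^{\tau^{\tilde X}_\ell}\mathds{1}_{\tilde X_s>0}\,ds\right)\right] = \exp(-p\ell\sqrt{2\mu}),
\]
and analogously with $p$ replaced by $1-p$ for the negative occupation time. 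Substituting $\ell = c_\gamma(\theta)\mathbf{a}^{-1}$ gives~\eqref{eqn:p-quantum-area}.

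The hard part will be the correct identification $\mathcal{L}_t = 2\mathbf{a}\,L^{\tilde X}(t)$, as opposed to an $\mathbf{a}^{-1}$-proportional relation suggested by a naive spatial rescaling. The factor $\mathbf{a}^{-1}$ in the exponent depends crucially on interpreting $\tfrac{1}{2}\mathcal{L}$ as the semimartingale (Tanaka) local time of $X$ at $0$, which for $X$ with $d\langle X\rangle_s = \mathbf{a}^2\,ds$ differs by a factor $\mathbf{a}^2$ from the occupation-density local time; reconciling the SDE~\eqref{eqn:X-sde} with the $\theta=0$ identification at the same time will require care.
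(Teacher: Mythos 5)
Your proposal is correct and takes essentially the same route as the paper: the paper likewise represents $\sigma_t$ as a Poisson point process of excursion lengths indexed by the local time of $|X|$, thins it into left/right excursions kept with probabilities $p_\gamma(\theta)$ and $1-p_\gamma(\theta)$ (Proposition~\ref{prop:contour}), and evaluates the resulting stable-$\tfrac12$ Laplace transform, only packaging the last step via L\'evy's identity and the first-passage time $T_1$ (giving $A_1^\rmL \overset{d}{=} \mathbf{a}^{-2}c_\gamma(\theta)^2p_\gamma(\theta)^2T_1$) rather than your direct Campbell-formula computation on the excursion measure. The normalization $\mathcal{L}_t = 2\mathbf{a}\,L^{\tilde X}(t)$ that you flag as delicate is precisely the convention the paper's proof also invokes when it asserts that the first time the local time of $|X|$ reaches $2c_\gamma(\theta)t$ agrees in law with the first time a standard Brownian motion's local time reaches $\mathbf{a}^{-1}c_\gamma(\theta)t$.
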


\iffalse
\begin{proposition}\label{prop:p-excursion}
	Let $T_1$ be the first time when a standard Brownian motion starting from 0 hits 1. Then there exists constant $c_0$  such that $A_1^L$ and $A_1^R$ have the same distribution as $c_0p^2T_1$ and $c_0(1-p)^2T_1$. Moreover, $A_1^L$ and $A_1^R$ are independent.
\end{proposition}

\fi

\begin{proof}
	%We begin by observing, by setting $h^C=h+\frac{1}{\gamma}\log C$ and $\eta_C'$ be the curve $\eta'$ reparameterized by the $h^C$-LQG area,  the corresponding mating-of-trees Brownian motion $Z_t^C = C^{1/2}Z_{C^{-1}t}$ and furthermore $\sigma_t^C = \sigma_{C^{-1/2}t}$, $(A_t^L)^C = CA_{C^{-1/2}t}^L$. Moreover, by the scale invariance of quantum cones \cite[Proposition 4.13]{DMS14}, the tuple $(h^C, \eta'_C,\eta_\theta)$ agrees in law with $(h,\eta',\eta_\theta)$ and it follows that $(A_t^L)^C$ has the same distribution as $A_t^L$. Therefore $A_{Ct}^L$ and similarly $A_{Ct}^R$ have the same distribution as $C^{2}A_t^L$ and $C^2A_t^R$. Moreover, since $\sigma_t$ is a stopping time such that $\eta'(\sigma_t)$ is a.s.\ on $\eta_\theta$, and $(A_{t+s}^L-A_t^L, A_{t+s}^R-A_t^R)_{s\ge 0}$ is determined by $(Z_{s+\sigma_t} - Z_{\sigma_t})_{s\ge0}$ in the same way, we observe that $(A_t^L, A_t^R)$ has independent and homogeneous increment and hence $1/2$-stable subordinators. 
	
	Since the local time of $|X|$ at time $\sigma_t$ is $2c_\gamma(\theta)t$ (Proposition \ref{prop:X-local-time}), by \cite[Theorem 19.13]{Kal97}
	\begin{equation}
	\sigma_t = \int_0^{2c_\gamma(\theta)t}\int \ell(u)\xi(dudr)
	\end{equation}
	for a Poisson point process  $\xi$ on $\bbR_+\times \mathcal{E}_0$ where $\mathcal{E}_0$ is the set of excursions and $\ell(u)$ is the length of the excursion for $u\in \mathcal{E}_0$. By Proposition \ref{prop:contour},  since $\eta'$ is parameterized by quantum area, 
    the jumps of $A_t^L$ (resp.\ $A_t^R$) 
    are precisely the lengths of the excursions $u$ kept with probability $p_\gamma(\theta)$ (resp. $1-p_\gamma(\theta)$). Therefore by the thinning of Poisson point processes, $A_t^\rmL$ and $A_t^\rmR$ are independent, with the same distributions as $\sigma_{p_\gamma(\theta)t}$ and $\sigma_{(1-p_\gamma(\theta))t}$ respectively. Now $\sigma_t$ is the first time when the local time of $|X|$ at 0 is $2c_\gamma(\theta)t$,  which has the same distribution as the first time when the local time of a standard Brownian motion $(B_t)_{t\geq0}$ at 0 is $\bfa^{-1} c_\gamma(\theta)t$.  Therefore by~\cite[Theorem 9.14]{LGbook}, $\sigma_t$ has the same distribution as $\inf\{s>0: B_s = \bfa^{-1} c_\gamma(\theta)t\}$, and further by scaling, if we set $T_1 = \inf\{s>0:B_s=1\}$, then 
    $$\sigma_t \overset{d}{=} \bfa^{-2} c_\gamma(\theta)^2t^2 T_1, \ \ \  A_1^\rmL \overset{d}{=} \bfa^{-2} c_\gamma(\theta)^2p_\gamma(\theta)^2 T_1, \ \ \  A_1^\rmR \overset{d}{=} \bfa^{-2} c_\gamma(\theta)^2(1-p_\gamma(\theta))^2T_1.$$ The claim then follows from the fact that for $\lambda>0$, $\bbE \exp(-\lambda T_1)=\exp(-(2\lambda)^{1/2})$.
\end{proof}

Next we link $A_1^\rmL$ and $A_1^\rmR$ with areas of quantum triangles. See Figure~\ref{fig-weld-wedge}. 
\begin{proposition}\label{prop:p-qt}
 Let $W^\rmL = (1-\frac{\gamma^2}{4})(1-\frac{2\theta}{\pi})$ and $W^\rmR = (1-\frac{\gamma^2}{4})(1+\frac{2\theta}{\pi})$. Assume $W^\rmL \neq\frac{\gamma^2}{2}$.  Then the random variable $A_1^\rmL$   has the same law as the quantum area of a quantum triangle from $\QT(W^\rmL,2-W^\rmL,\gamma^2;1)^\#$  where the boundary arc that is not adjacent to the weight $\gamma^2$ vertex of the quantum triangle has quantum length 1. The analogous statement holds for $A_1^\rmR$ and $W^\rmR$ instead of $A_1^\rmL$ and $W^\rmL$.
\end{proposition}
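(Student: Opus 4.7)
My plan is to combine the imaginary-geometry/mating-of-trees description of the subsurfaces of $\eta'([0,\infty))$ determined by $\eta_\theta$ with the conformal welding identity in Theorem~\ref{thm:weld-0}. I will sketch the argument for $A_1^\rmL$; the case of $A_1^\rmR$ is analogous.

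First, by the mating-of-trees coupling together with \cite[Theorem 1.2]{DMS14}, the surface $\mathcal{W}^\rmL$ formed by the portion of $\eta'([0,\infty))$ lying to the left of $\eta_\theta$ is a quantum wedge. A standard imaginary-geometry computation, based on the boundary-condition shift $\theta\chi$ imposed by a $\theta$-angle flow line \cite[Theorem 1.1]{MS16a}, identifies its weight as $W^\rmL$; the analogous statement yields $\mathcal{W}^\rmR$ of weight $W^\rmR$, and one checks the consistency $W^\rmL+W^\rmR = 2-\gamma^2/2$, matching the total weight of the half-plane wedge obtained by slitting the $(4-\gamma^2)$-quantum cone along $\eta'$.

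Next, let $z=\eta'(\sigma_1)$; by construction $z$ lies on $\eta_\theta$ at quantum boundary distance $1$ from the root along the $\eta_\theta$-boundary of $\mathcal{W}^\rmL$. Using the scale invariance of the quantum cone, this marking is equivalent to sampling a boundary typical point on $\eta_\theta$, so that $(\mathcal{W}^\rmL,0,\infty,z)$ arises as a sample from $\mathcal{M}_{2,\bullet}^{\mathrm{wedge}}(W^\rmL)$ disintegrated on the length-$1$ slice. By \cite[Theorems 1.1 and 1.4]{MS16a}, conditionally on $(\mathcal{W}^\rmL,z)$, the left boundary $\eta^\rmL$ of $\eta'((-\infty,\sigma_1))$ inside $\mathcal{W}^\rmL$ is an $\SLE_\kappa(W^\rmL-2;-W^\rmL,W^\rmL-2)$ curve from $z$ with force points at $z^{\pm}$ and at the finite endpoint of $\mathcal{W}^\rmL$, reaching its continuation threshold there. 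Applying Theorem~\ref{thm:weld-0} with $W=W^\rmL$ (the hypothesis $W^\rmL\ne\gamma^2/2$ is precisely the exclusion appearing in the proposition, and $W^\rmL\in(0,2-\gamma^2/2)$ for $\theta\in(-\frac{\pi}{2},\frac{\pi}{2})$) identifies the curve-decorated surface as the conformal welding of a weight $W^\rmL$ quantum wedge (the ``future'' piece $\eta'([\sigma_1,\infty))$ restricted to the left of $\eta_\theta$) with a quantum triangle of weights $(2-W^\rmL,W^\rmL,\gamma^2)$ (the ``past'' piece $\mathcal{T}^\rmL$). The arc of $\mathcal{T}^\rmL$ between the $W^\rmL$- and $(2-W^\rmL)$-weight vertices coincides with $\eta_\theta|_{[0,z]}$ and hence has quantum length $1$, so disintegrating and normalizing yields $\QT(W^\rmL,2-W^\rmL,\gamma^2;1)^{\#}$, whose quantum area equals $A_1^\rmL$.

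\textbf{Main obstacle.} The principal subtlety lies in reconciling the infinite measure $\mathcal{M}_{2,\bullet}^{\mathrm{wedge}}(W^\rmL)$ appearing in Theorem~\ref{thm:weld-0} with the probability law on the original quantum cone. The stopping time $\sigma_1$ supplies the missing normalization: it pins the welded arc length at $1$, effectively performing the disintegration implicit in Theorem~\ref{thm:weld-0} so that the resulting conditional law of the triangle is the \emph{probability} measure $\QT(W^\rmL,2-W^\rmL,\gamma^2;1)^{\#}$. A related technical point is verifying that $\eta^\rmL$ terminates precisely at the finite endpoint of $\mathcal{W}^\rmL$, which follows from the weight arithmetic $(-W^\rmL)+(W^\rmL-2)=-2$ characterizing the continuation threshold of the relevant $\SLE_\kappa(\underline{\rho})$ process.
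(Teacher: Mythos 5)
Your proof is correct and takes essentially the same route as the paper's: cut along $\eta_\theta$ using \cite[Theorem 1.2]{DMS14} to obtain independent quantum wedges of weights $W^\rmL,W^\rmR$, identify the boundary of the past $\eta'((-\infty,\sigma_1])$ inside each wedge as the relevant $\SLE_\kappa(\underline{\rho})$ via \cite[Theorem 1.1 and Theorem 1.4]{MS16a}, and conclude from Theorem~\ref{thm:weld-0} by disintegrating on the event that the boundary arc between the third marked point and the finite endpoint (the segment of $\eta_\theta$ from $0$ to $\eta'(\sigma_1)$) has quantum length $1$. One peripheral slip in your ``obstacle'' remark: $\eta^\rmL$ does not terminate at the finite endpoint of $\mathcal{W}^\rmL$; the weight arithmetic $(-W^\rmL)+(W^\rmL-2)=-2$ means it reaches its continuation threshold by merging with the boundary just past that force point, and this merging point is the weight-$\gamma^2$ vertex of $\mathcal{T}^\rmL$ --- a correction that does not affect your main argument, which correctly places the length-$1$ arc between the $W^\rmL$ and $2-W^\rmL$ vertices.
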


\begin{figure}[ht]
	\centering
	\includegraphics[width=0.5\textwidth]{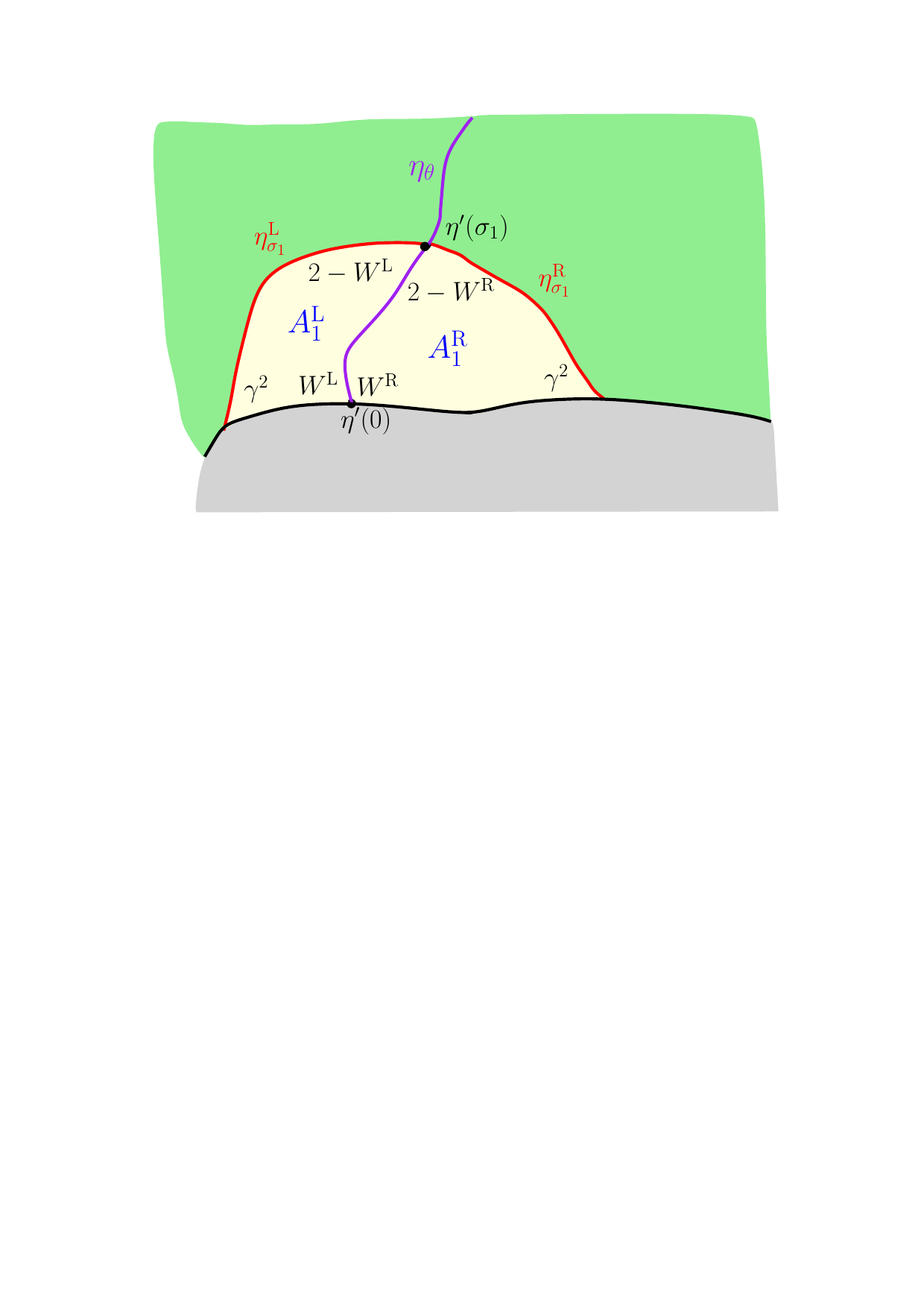}
	\caption{Illustration of Proposition \ref{prop:p-qt}. The segment of $\eta_\theta$ from $\eta(0)$ to $\eta(\sigma_1)$ has quantum length 1,  and $A_1^\rmL$ and $A_1^\rmR$ are the areas of the   quantum triangles with the weights shown.     }\label{fig-weld-wedge}
\end{figure}

\begin{proof}
    Let $\eta_{\sigma_1}^\rmL$ and   $\eta_{\sigma_1}^\rmR$ be the left and right boundaries of $\eta((-\infty,\sigma_1])$. Let  $\bbH^\rmL$ (resp.\ $\bbH^\rmR$) be the part of $\eta((0,\infty))$ lying on the left (resp.\ right) hand side of  $\eta_\theta$. Then $(\eta((0,\infty)),h,0,\infty)/{\sim_\gamma}$ is a weight $2-\frac{\gamma^2}{2}$ quantum wedge, and $\eta_\theta$ is an $\SLE_\kappa(W^\rmL-2;W^\rmR-2)$ curve within $\eta((0,\infty))$. By  Theorem~\ref{thm:wedge-welding}, $(\bbH^\rmL,h,0,\infty)/{\sim_\gamma}$ and $(\bbH^\rmR,h,0,\infty)/{\sim_\gamma}$ are independent quantum wedges of weight $W^\rmL$ and $W^{\rmR}$. Furthermore, given $\partial\eta((-\infty,0))$ and $\eta_\theta$, by~\cite[Theorem 1.1 and Theorem 1.4]{MS16a}, $\eta_{\sigma_1}^\rmL$ is an $\SLE_\kappa(-W^\rmL,W^\rmL-2;W^\rmL-2)$ curve in $\bbH^\rmL$, and $\eta_{\sigma_1}^\rmR$ is an $\SLE_\kappa(W^\rmR-2;-W^\rmR,W^\rmR-2)$ curve in $\bbH^\rmR$. The claim then follows by disintegrating  over the boundary length of the arc between the third marked point and the finite endpoint of the quantum wedge on the left hand side of~\eqref{eq:thm-weld-0} in Theorem~\ref{thm:weld-0}.  
\end{proof}

 To remove the conditioning on the boundary quantum length in Proposition~\ref{prop:p-qt}, we need the following scaling property of quantum triangles.
 \begin{lemma}\label{lm:qt-scaling}
     For $W_1,W_2,W_3,\ell>0$, the following procedures agree for all $\lambda>0$:
     \begin{itemize}
         \item Sample a quantum triangle from $\QT(W_1,W_2,W_3;\lambda\ell)$;
         \item Sample a quantum triangle from $\lambda^{\frac{\gamma^2+2-(W_1+W_2+W_3)}{\gamma^2}}\QT(W_1,W_2,W_3;\ell)$ and add $\frac{2}{\gamma}\log \lambda$ to its field.
     \end{itemize}
 \end{lemma}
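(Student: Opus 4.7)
The plan is to use the fact that adding the constant $a := \frac{2}{\gamma}\log\lambda$ to a field multiplies the quantum boundary length by $e^{\gamma a/2} = \lambda$ and the quantum area by $e^{\gamma a} = \lambda^2$. Writing $T_a$ for the shift $\phi \mapsto \phi+a$, the pushforward $(T_a)_*\QT(W_1,W_2,W_3;\ell)$ is therefore supported on triangles whose boundary arc between the weight-$W_1$ and weight-$W_2$ vertices has length $\lambda\ell$, so it suffices to prove the identity of measures
\[
(T_a)_*\QT(W_1,W_2,W_3;\ell) \;=\; \lambda^{-\frac{\gamma^2+2-\bar W}{\gamma^2}}\,\QT(W_1,W_2,W_3;\lambda\ell), \qquad \bar W := W_1+W_2+W_3,
\]
which is algebraically equivalent to the statement of the lemma.

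I would first dispatch the thick case $W_1,W_2,W_3 > \gamma^2/2$ via Definition~\ref{def-qt-thick}. A sample $\phi$ from $\QT(W_1,W_2,W_3)$ is of the form $h + f + \mathbf c$ (modulo $\sim_\gamma$), where $f$ is deterministic and $\mathbf c$ has density proportional to $e^{(\bar\beta/2-Q)c}\,dc$ with $\bar\beta = \beta_1+\beta_2+\beta_3$. Since $T_a$ acts on $\phi$ as the translation $\mathbf c \mapsto \mathbf c+a$, a one-line change of variable yields $(T_a)_*\QT(W_1,W_2,W_3) = e^{(Q-\bar\beta/2)a}\QT(W_1,W_2,W_3)$. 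Combining this with the boundary-length rescaling and the Jacobian $d(\lambda\ell)=\lambda\,d\ell$ in the disintegration gives $(T_a)_*\QT(\,\cdot\,;\ell) = \lambda^{-1} e^{(Q-\bar\beta/2)a}\QT(\,\cdot\,;\lambda\ell)$. Substituting $a = \frac{2}{\gamma}\log\lambda$ and using the identity $\frac{\bar\beta - 2Q}{\gamma} - 1 = \frac{\gamma^2+2-\bar W}{\gamma^2}$ finishes the thick case; the exponent is consistent with the power law $|\QT(\,\cdot\,;\ell)| \propto \ell^{(\bar\beta-2Q)/\gamma - 1}$ from Proposition~\ref{prop:QT-bdry-length}.

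For the thin case I would appeal to Definition~\ref{def-qt-thin}, which realizes a thin triangle as a thick triangle $\QT(\tilde W_1,\tilde W_2, \tilde W_3)$ concatenated with independent thin disks $\Md_2(W_i)$ at each vertex with $W_i < \gamma^2/2$. The same LCFT change-of-variable argument, applied to the thick-disk components underlying Definition~\ref{def-thin-disk}, produces a parallel scaling for the two-pointed disk measures $\Md_2(W_i;\ell_1,\ell_2)$ under $T_a$. Performing the field shift on every component of the concatenation simultaneously, tracking which arcs contribute to the distinguished length $\ell$ between the weight-$W_1$ and weight-$W_2$ vertices, and multiplying the individual scaling constants gives the overall exponent $\frac{\gamma^2+2-\bar W}{\gamma^2}$. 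The main obstacle is the bookkeeping here: one must verify that the scaling exponents contributed by the thick-triangle factor and each thin-disk factor combine to exactly $\frac{\gamma^2+2-\bar W}{\gamma^2}$. This works because the exponent depends only on the sum $\bar W$, and the replacements $W_i \leftrightarrow \gamma^2 - W_i$ in the Definition~\ref{def-qt-thin} construction are tuned precisely so that the individual contributions add up correctly.
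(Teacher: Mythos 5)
Your route is the same one the paper takes: the paper proves this lemma by the change of variables in the constant $\mathbf{c}\sim e^{(\frac{\beta_1+\beta_2+\beta_3}{2}-Q)c}dc$, deferring the details to \cite[Lemmas 2.18 and 2.23]{AHS20}, and your thick-case computation plus the thick--thin concatenation is exactly that argument. However, as written your thick case has a concrete error: the Jacobian factor in the displayed intermediate identity points the wrong way. Pushing forward the disintegration $\QT=\int_0^\infty\QT(\cdot\,;m)\,dm$ under $T_a$ and substituting $m=\lambda\ell$ produces a factor $\lambda\,d\ell$ on the \emph{right-hand} side, so the correct statement is $(T_a)_*\QT(W_1,W_2,W_3;\ell)=\lambda\, e^{(Q-\bar\beta/2)a}\,\QT(W_1,W_2,W_3;\lambda\ell)$, i.e.\ with exponent $1+\frac{2Q-\bar\beta}{\gamma}=-\bigl(\frac{\bar\beta-2Q}{\gamma}-1\bigr)=-\frac{\gamma^2+2-\bar W}{\gamma^2}$, which is precisely what your reduction requires. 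Your version with $\lambda^{-1}$ gives exponent $-\bigl(\frac{\bar\beta-2Q}{\gamma}+1\bigr)$, off by $\lambda^{2}$; it is also inconsistent with the power law $|\QT(W_1,W_2,W_3;\ell)|\propto \ell^{\frac{\bar\beta-2Q}{\gamma}-1}$ of Proposition~\ref{prop:QT-bdry-length}, since pushforward by $T_a$ preserves total mass. So the substitution you invoke does not close the thick case from your displayed formula; the fix is simply to flip the Jacobian.

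For the thin case, your key observation is the right one (the overall prefactor depends only on $\bar W=W_1+W_2+W_3$, and the $W_i\leftrightarrow\gamma^2-W_i$ swaps in Definition~\ref{def-qt-thin} are compatible with this), but one step deserves to be made explicit: for a thin-disk factor $\Md_2(W_i)$, applying the $\mathbf{c}$-change of variables to each thick bead multiplies the \emph{intensity} of the Poisson point process in Definition~\ref{def-thin-disk} by $\lambda^{1-\frac{2W_i}{\gamma^2}}$, and converting this into an overall multiplicative constant $\lambda^{\frac{2W_i}{\gamma^2}-1}$ on the thin-disk measure uses the Lebesgue law of the interval length $T$ (this is exactly the content of \cite[Lemma 2.23]{AHS20}, which the paper cites). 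Once that is in place, the product of the thick-triangle factor and the thin-disk factors is $\lambda^{-2+\frac{\bar W-2}{\gamma^2}}$, depending only on $\bar W$, and the same (corrected) Jacobian step as in the thick case gives the lemma.
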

 The proof, which is based on changing variables in the integration over the constant term $\mathbf{c}\sim e^{(\frac{\beta_1+\beta_2+\beta_3}{2}-Q)c}dc$, is identical to \cite[Lemma 2.18 and Lemma 2.23]{AHS20}. We omit the details.

 \begin{lemma}\label{lem:p-qt-A}
  In the setting of Proposition~\ref{prop:p-qt},  we have the following identities:
  \begin{equation}\label{eq:lem:p-qt-A}
       p_\gamma(\theta)c_\gamma(\theta) = \frac{\bfa}{\sqrt 2}\cdot\frac{|\QT(W^\rmL,2-W^\rmL,\gamma^2;1)|}{\QT(W^\rmL,2-W^\rmL,\gamma^2)[e^{-A}]},  \ \ \ (1-p_\gamma(\theta))c_\gamma(\theta) = \frac{\bfa}{\sqrt 2}\cdot\frac{|\QT(W^\rmR,2-W^\rmR,\gamma^2;1)|}{\QT(W^\rmR,2-W^\rmR,\gamma^2)[e^{-A}]}
  \end{equation}
  where $A$ is the area of the quantum triangle. Furthermore, if we set $\beta^\rmL = \gamma+\frac{2-W^\rmL}{\gamma}$, $\wt{\beta^\rmL} = \gamma+\frac{W^\rmL}{\gamma}$, $\beta^\rmR = \gamma+\frac{2-W^\rmR}{\gamma}$ and $\wt{\beta^\rmR} = \gamma+\frac{W^\rmR}{\gamma}$, then
  \begin{equation}\label{eq:lem:p-qt-B}
       p_\gamma(\theta)c_\gamma(\theta) = \frac{2}{\gamma\sqrt{\sin(\frac{\pi\gamma^2}{4})}}\cdot\frac{\bar{H}_{(0,1,0)}^{(\beta^\rmL,\wt{\beta^\rmL},\frac{2}{\gamma})}}{{H}_{(0,0,0)}^{(\beta^\rmL,\wt{\beta^\rmL},\frac{2}{\gamma})}},  \ \ \ (1-p_\gamma(\theta))c_\gamma(\theta) = \frac{2}{\gamma\sqrt{\sin(\frac{\pi\gamma^2}{4})}}\cdot\frac{\bar{H}_{(0,1,0)}^{(\beta^\rmR,\wt{\beta^\rmR},\frac{2}{\gamma})}}{{H}_{(0,0,0)}^{(\beta^\rmR,\wt{\beta^\rmR},\frac{2}{\gamma})}}
  \end{equation}
  where $\bar{H}$ is defined in~\eqref{eq-bar-H} and $H$ is defined in Definition~\ref{def-H} and Proposition~\ref{prop-H-extend}.
 \end{lemma}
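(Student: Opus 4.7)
The plan is to combine Proposition~\ref{prop:p-quantum-area} with Proposition~\ref{prop:p-qt} to compute the Laplace transform of the quantum area of a $\QT(W^\rmL,2-W^\rmL,\gamma^2;1)^\#$--sample in closed form, then use the scaling property in Lemma~\ref{lm:qt-scaling} to convert this into a formula for $\QT(W^\rmL,2-W^\rmL,\gamma^2)[e^{-A}]$, and finally invoke the three-point function identities to express everything through $\bar H$ and $H$.

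First, I would observe that $W^\rmL+(2-W^\rmL)+\gamma^2=2+\gamma^2$, so the exponent $\frac{\gamma^2+2-(W_1+W_2+W_3)}{\gamma^2}$ in Lemma~\ref{lm:qt-scaling} vanishes. Consequently $|\QT(W^\rmL,2-W^\rmL,\gamma^2;\ell)|$ is independent of $\ell$, and sampling from $\QT(W^\rmL,2-W^\rmL,\gamma^2;\ell)$ amounts to sampling from $\QT(W^\rmL,2-W^\rmL,\gamma^2;1)$ and adding $\tfrac{2}{\gamma}\log\ell$ to the field, which rescales the area $A$ by $\ell^2$. Writing $\mathbb{E}_\#$ for the expectation under $\QT(W^\rmL,2-W^\rmL,\gamma^2;1)^\#$ and using Proposition~\ref{prop:p-qt} to identify this expectation with $\mathbb{E}[e^{-\ell^2 A_1^\rmL}]$, followed by Proposition~\ref{prop:p-quantum-area} to evaluate the latter as $\exp(-\bfa^{-1}c_\gamma(\theta)p_\gamma(\theta)\sqrt{2}\,\ell)$, I would disintegrate over boundary length to obtain
\begin{equation*}
   \QT(W^\rmL,2-W^\rmL,\gamma^2)[e^{-A}]=|\QT(W^\rmL,2-W^\rmL,\gamma^2;1)|\int_0^\infty e^{-\bfa^{-1}c_\gamma(\theta)p_\gamma(\theta)\sqrt{2}\,\ell}\,d\ell=|\QT(W^\rmL,2-W^\rmL,\gamma^2;1)|\cdot\frac{\bfa}{\sqrt{2}\,c_\gamma(\theta)p_\gamma(\theta)}.
\end{equation*}
Rearranging gives the first identity in~\eqref{eq:lem:p-qt-A}; the second follows from the same argument with $W^\rmR$ and $A_1^\rmR$.

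For~\eqref{eq:lem:p-qt-B}, I would apply Proposition~\ref{prop:QT-bdry-length} with $(\beta_1,\beta_2,\beta_3)=(\beta^\rmL,\wt{\beta^\rmL},\tfrac{2}{\gamma})$ to get
\begin{equation*}
   |\QT(W^\rmL,2-W^\rmL,\gamma^2;1)|=\frac{2}{\gamma\prod_{i=1}^{3}(Q-\beta_i)}\,\bar H^{(\beta^\rmL,\wt{\beta^\rmL},2/\gamma)}_{(0,1,0)},
\end{equation*}
and either Lemma~\ref{lem-H-thick} (in the thick regime $W^\rmL>\gamma^2/2$, where one checks $\beta^\rmL<Q$, $\wt{\beta^\rmL}<Q$, and $\sum\beta_i=2\gamma+\tfrac{4}{\gamma}>2Q$) or Lemma~\ref{lem-H-thin} (in the thin regime $W^\rmL<\gamma^2/2$, where $(2Q-\beta^\rmL)+\wt{\beta^\rmL}+\tfrac{2}{\gamma}-2Q=\tfrac{2W^\rmL}{\gamma}>0$) to obtain
\begin{equation*}
   \QT(W^\rmL,2-W^\rmL,\gamma^2)[e^{-A}]=\prod_{i=1}^{3}(Q-\beta_i)^{-1}\,H^{(\beta^\rmL,\wt{\beta^\rmL},2/\gamma)}_{(0,0,0)}.
\end{equation*}
Taking the ratio and substituting $\bfa=\sqrt{2}/\sqrt{\sin(\pi\gamma^2/4)}$ from~\eqref{eq:mot-var-a} turns~\eqref{eq:lem:p-qt-A} into~\eqref{eq:lem:p-qt-B}, and the right-hand version follows symmetrically.

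The only mild obstacle is the case split between the thick and thin regimes in the last step, which is precisely why the hypothesis $W^\rmL\neq\gamma^2/2$ is imposed; the scaling-plus-disintegration argument leading to~\eqref{eq:lem:p-qt-A} is insensitive to this distinction, but the identification of the resulting ratio with $\bar H/H$ requires the meromorphic-continuation input of Lemma~\ref{lem-H-thin} when $W^\rmL$ is thin so that $\beta^\rmL>Q$ lies outside the Seiberg range for the direct definition.
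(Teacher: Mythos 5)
Your proposal is correct and follows essentially the same route as the paper's proof: the scaling relation of Lemma~\ref{lm:qt-scaling} plus the disintegration over the boundary length, combined with Propositions~\ref{prop:p-qt} and~\ref{prop:p-quantum-area}, to evaluate $\QT(W^\rmL,2-W^\rmL,\gamma^2)[e^{-A}]$, and then Proposition~\ref{prop:QT-bdry-length} together with Lemmas~\ref{lem-H-thick} and~\ref{lem-H-thin} to pass to the $\bar H/H$ form. Your explicit verification of the vanishing scaling exponent and of the hypotheses in the thick/thin cases is a welcome addition but does not change the argument.
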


 \begin{proof}
 For a sample from $\QT(W^\rmL,2-W^\rmL,\gamma^2)$, let $A$ be its quantum area. Then by Lemma \ref{lm:qt-scaling}, we have $\QT(W^\rmL,2-W^\rmL,\gamma^2;\ell)^\#[e^{-\mu A}] = \QT(W^\rmL,2-W^\rmL,\gamma^2;1)^\#[e^{-\mu\ell^2 A}] $. Furthermore, by Proposition~\ref{prop:QT-bdry-length}, $\big|\QT(W^\rmL,2-W^\rmL,\gamma^2;\ell)\big|=\big|\QT(W^\rmL,2-W^\rmL,\gamma^2;1)\big|$. Therefore, using Proposition~\ref{prop:p-qt}, \begin{equation}\label{eqn:p-lcft-a}
     \begin{split}
         \QT(W^\rmL,2-W^\rmL,\gamma^2)&[e^{-  A}] = \int_0^\infty \big|\QT(W^\rmL,2-W^\rmL,\gamma^2;\ell)\big|\cdot\QT(W^\rmL,2-W^\rmL,\gamma^2;\ell)^\#[e^{- A}]d\ell\\
         &= \int_0^\infty \big|\QT(W^\rmL,2-W^\rmL,\gamma^2;1)\big|\cdot\QT(W^\rmL,2-W^\rmL,\gamma^2;1)^\#[e^{-\ell^2 A}]d\ell\\
		&= \int_0^\infty \big|\QT(W^\rmL,2-W^\rmL,\gamma^2;1)\big|\cdot \bbE[e^{- \ell^2 A_1^\rmL}] d\ell\\
		&= \int_0^\infty \big|\QT(W^\rmL,2-W^\rmL,\gamma^2;1)\big|\cdot\exp\big(-\bfa^{-1} c_\gamma(\theta)p_\gamma(\theta)\sqrt{2}\ell\big)d\ell \\
		&= \big|\QT(W^\rmL,2-W^\rmL,\gamma^2;1)\big|\cdot\frac{\bfa}{\sqrt{2}c_\gamma(\theta)p_\gamma(\theta)}.
     \end{split}
 \end{equation}
This verifies the first half of~\eqref{eq:lem:p-qt-A}. The second half of~\eqref{eq:lem:p-qt-A} follows similarly. Using the value of $\mathbf{a}$ from~\eqref{eq:mot-var-a}, Equation~\eqref{eq:lem:p-qt-B} is a direct consequence of~\eqref{eq:lem:p-qt-A} along with Proposition~\ref{prop:QT-bdry-length}, Lemma~\ref{lem-H-thick} and Lemma~\ref{lem-H-thin}.
\end{proof}

\subsection{Computation of quantum area from Liouville CFT}\label{subsec:area-lcft}

Recall the functions $H^{(\beta_1, \beta_2, \beta_3)}_{(\mu_1, \mu_2, \mu_3)}$ and $H\begin{pmatrix} \beta_1, \beta_2, \beta_3 \\ \sigma_1,  \sigma_2,   \sigma_3 \end{pmatrix}$ introduced in Section~\ref{sec:lcft-qt}. The goal of this section is to prove the following identity. 

\begin{proposition}\label{prop-H-special}
    For $\beta \in (\gamma, Q)$, we have
    \[H^{(Q + \frac{3\gamma}2 - \beta, \frac2\gamma, \beta)}_{(0,0,0)} = \frac{2}{\gamma \pi} \Gamma(1-\frac{\gamma^2}4)^2 \sqrt{\sin (\frac{\pi\gamma^2}4)}
			\frac
			{\Gamma(\frac\gamma2(Q+\frac\gamma2 - \beta)) \Gamma(\frac\gamma2(\beta-\gamma))}
			{ \Gamma(\frac\gamma2(Q - \beta)) 
			\Gamma(\frac\gamma2(\beta - \frac{3\gamma}2))
			}. \]
\end{proposition}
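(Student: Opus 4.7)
My approach is a direct LCFT computation of $H^{(\beta_1,\beta_2,\beta_3)}_{(0,0,0)}$ with $(\beta_1, \beta_2, \beta_3)=(Q+\frac{3\gamma}{2}-\beta, \frac{2}{\gamma}, \beta)$, using the rigorous integrability results for the boundary three-point function established in~\cite{arsz-structure-constants}. Two structural features of the problem drive the closed-form answer: first, the total insertion $\bar\beta := \sum_i \beta_i = 2Q+\gamma$ saturates the Seiberg bound $\bar\beta > 2Q$ with gap exactly $\gamma$, so the Liouville $\mathbf{c}$-integral (after the change of variables $u = e^{\gamma c}\mu_{\phi_0}(\mathcal S)$) produces a factor $\Gamma(\tfrac{\bar\beta-2Q}{2\gamma}) = \Gamma(\tfrac12) = \sqrt{\pi}$; second, $\beta_2 = \frac{2}{\gamma}$ is the ``dual cosmological'' insertion in LCFT, i.e., the momentum associated to the dual Liouville potential with coupling $\frac{2}{\gamma}$ rather than $\gamma$, which is precisely the situation in which fusion-type simplifications occur in the Ponsot--Teschner formula.

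Concretely, I would invoke the Ponsot--Teschner-type formula for $H$ (rigorously established in~\cite{arsz-structure-constants}; meromorphic continuation recalled in Proposition~\ref{prop-H-extend}), specialized to $\beta_2 = \frac{2}{\gamma}$ and the boundary values $\sigma_i = \frac{Q}{2}+\frac{1}{2\gamma}$ corresponding to $\mu_i = 0$ in~\eqref{eq-H-sigma}. At these special parameters the generic double-sine integral in the Ponsot--Teschner formula collapses to a single residue: the dual cosmological insertion at $\beta_2$, combined with the boundary $\sigma$-values, triggers a fusion contraction that leaves a closed-form ratio of $\Gamma_{\gamma/2}$ double-gamma functions. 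For $\beta \in (\frac{3\gamma}{2}, Q)$ all individual Seiberg bounds $\beta_i < Q$ hold so this is the direct LCFT 3-point evaluation; for $\beta \in (\gamma, \frac{3\gamma}{2}]$ (where $\beta_1 > Q$) the identity then extends by the meromorphic continuation of Proposition~\ref{prop-H-extend}.

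Two applications of the shift equations~\eqref{eqn-gamma-shift} for $\Gamma_{\gamma/2}$, one for each of the two relevant $\Gamma_{\gamma/2}$-ratios, convert the result into a ratio of ordinary $\Gamma$-functions whose arguments differ by $\frac{\gamma^2}{4}$. Writing $a := \frac{\gamma}{2}(Q-\beta)$ and $b := \frac{\gamma}{2}(\beta-\frac{3\gamma}{2})$, so that $a+b = 1-\frac{\gamma^2}{2}$ is constant in $\beta$, the $\beta$-dependent part of the resulting expression naturally assembles as $\frac{\Gamma(a+\gamma^2/4)\,\Gamma(b+\gamma^2/4)}{\Gamma(a)\,\Gamma(b)}$, matching the ratio in the proposition. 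The transcendental prefactor $\frac{2}{\gamma \pi}\Gamma(1-\frac{\gamma^2}{4})^2 \sqrt{\sin(\pi\gamma^2/4)}$ collects (i) the $\sqrt{\pi}$ from the saturated $\mathbf c$-integral, (ii) the factor $(\sin(\frac{\pi\gamma^2}{4}))^{-1/2}$ implicit in the definition~\eqref{eq-H-sigma} of $\mu_B$ at the boundary value $\sigma = \frac{Q}{2}+\frac{1}{2\gamma}$, and (iii) two powers of the Liouville scale $\pi\Gamma(1-\frac{\gamma^2}{4})$ generated by the shift-equation step.

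The main technical obstacle is justifying the fusion-collapse of the Ponsot--Teschner integral at the boundary $\sigma$-values $\mu_i = 0$, which lies on the edge of the formula's convergence domain. I expect to handle this either by taking a limit from the interior of the $\sigma$-range and leveraging the meromorphic extension of Proposition~\ref{prop-H-extend}, or by directly applying the BPZ shift equation at the degenerate $-\frac{\gamma}{2}$ insertion adjacent to $\beta_2 = \frac{2}{\gamma}$, which reduces $H^{(\beta_1, 2/\gamma, \beta_3)}_{(0,0,0)}$ to evaluations of $H$ at shifted insertions whose closed forms are already established in~\cite{arsz-structure-constants}.
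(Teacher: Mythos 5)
Your plan is not a proof but a reduction of the proposition to exactly the computation it asserts: the claim that the Ponsot--Teschner double-sine integral ``collapses to a single residue'' at $\beta_2=\frac2\gamma$ and $\mu_1=\mu_2=\mu_3=0$, yielding the stated ratio of $\Gamma_{\gamma/2}$'s, is nowhere carried out, and you yourself flag it as the main obstacle. Note also that $\beta_2=\frac2\gamma$ is not a degenerate momentum, so the ``fusion contraction'' you invoke is not a standard simplification one can cite; moreover the point $\mu_i=0$ (i.e.\ $\sigma_i=\frac Q2\pm\frac1{2\gamma}$) sits on the boundary of the probabilistic range, so even granting the PT formula you would need a genuine limiting/continuation argument there. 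Your fallback --- ``apply the BPZ shift equation at the degenerate $-\frac\gamma2$ insertion, reducing $H^{(\beta_1,2/\gamma,\beta_3)}_{(0,0,0)}$ to evaluations of $H$ whose closed forms are already established'' --- is circular as stated: the shift equation relates three unknown values of $H$, and there is no pre-existing catalogue of closed forms at the shifted insertions other than the PT formula itself, which returns you to the unevaluated integral. Some of your prefactor bookkeeping is also only heuristic (e.g.\ attributing a factor $(\sin\frac{\pi\gamma^2}4)^{-1/2}$ to the definition of $\mu_B$ when all boundary cosmological constants vanish).

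For comparison, the paper deliberately avoids the explicit Ponsot--Teschner formula and instead closes the two-term recursion from the shift equation (Lemma~\ref{lem-full_shift_H} with $\chi=\frac\gamma2$) by two analytic inputs computed directly from the probabilistic definition of $H$: (i) choosing $\sigma_1=\sigma_2=\sigma_3=-\frac1{2\gamma}+\frac Q2$ so that $g_\chi(\sigma_i)=0$, and showing that the remaining term on one side vanishes in the limit $\beta_2\uparrow\frac2\gamma$ (Lemma~\ref{lem-limit-beta_2}, via the pole of $\Gamma_{\gamma/2}$ in the denominator of $\bar H$ in the boundary-length disintegration); and (ii) the Seiberg-saturation normalization $\lim_{\beta_1\downarrow 2Q-\beta_2-\beta_3}(\frac12\sum_i\beta_i-Q)H^{(\beta_1,\beta_2,\beta_3)}_{(0,0,0)}=1$ (Lemma~\ref{lem-limit-s}, by the change of variables in the $\mathbf c$-integral and dominated convergence). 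With these, the shift equation at $q\to0$ pins down $H^{(Q+\frac{3\gamma}2-\beta,\frac2\gamma,\beta)}_{(0,0,0)}$ in closed form. Your observation that $\bar\beta-2Q=\gamma$ produces a $\Gamma(\frac12)=\sqrt\pi$ in the $\mathbf c$-integral is essentially the germ of input (ii), but without (i) and the vanishing limit in $\beta_2$ your approach does not close; to make it rigorous you would either have to actually evaluate the PT formula at these parameters (a substantial special-functions computation you have not done) or supply limiting lemmas of the type the paper proves.
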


The key input is the following shift equation for $H$. 
	
	\begin{lemma}[{\cite[Theorem 3.1]{arsz-structure-constants}}]\label{lem-full_shift_H} Let $\chi = \frac{\gamma}{2}$ or $\frac{2}{\gamma}$ and fix $\sigma_3 \in [-\frac{1}{2 \gamma} + \frac{Q}{2}, \frac{1}{2 \gamma} + \frac{Q}{2} - \frac{\gamma}{4}] \times \mathbb{R} $. Writing $q = \frac1\gamma(2Q - \beta_1 - \beta_2 - \beta_3 + \chi)$ and $g_\chi(\sigma) = \left( \sin(\frac{\pi\gamma^2}4)\right)^{-\chi/\gamma} \cos\left( 2 \pi \chi (\sigma - \frac Q2)\right)$, we have for all $\beta_1, \beta_2, \beta_3, \sigma_1, \sigma_2$ that
		\begin{align*}
			&  \frac{\chi^2 \pi^{\frac{2\chi}{\gamma} -1 }  }{\Gamma(1-\frac{\gamma^2}{4})^{\frac{2\chi}{\gamma}}}  \Gamma(1-\chi\beta_2)\left( g_{\chi}(\sigma_3) - g_{\chi}(\sigma_2+\frac{\beta_2}{2}) \right) 
			H
			\begin{pmatrix}
				\beta_1 , \beta_2 + \chi, \beta_3 \\
				\sigma_1,  \sigma_2 + \frac{\chi}{2},   \sigma_3 
			\end{pmatrix}\\
     &=\frac{\Gamma(\chi(\beta_1-\chi))}{\Gamma( - q \frac{\gamma\chi}{2} ) \Gamma(-1 + \chi(\beta_1 + \beta_2-2\chi+q\frac{\gamma}{2}) ) } H
			\begin{pmatrix}
				\beta_1 - \chi, \beta_2, \beta_3 \\
				\sigma_1,  \sigma_2,   \sigma_3 
			\end{pmatrix}\\
			&+   \frac{\chi^2 \pi^{\frac{2\chi}{\gamma}} }{\Gamma(1-\frac{\gamma^2}{4})^{\frac{2\chi}{\gamma}}}  \frac{   \left(  g_{\chi}(\sigma_1) - g_{\chi}(\sigma_2-\frac{\beta_1}{2}+\frac{\chi}{2} ) \right) \Gamma(1-\chi\beta_1) }{\sin(\pi\chi(\chi-\beta_1))\Gamma(1- \chi(\beta_1-\chi+q\frac{\gamma}{2} )) \Gamma( \chi\beta_2 - \chi^2 +q\frac{\gamma\chi}{2}) }  H
			\begin{pmatrix}
				\beta_1 + \chi, \beta_2, \beta_3 \\
				\sigma_1,  \sigma_2,   \sigma_3 
			\end{pmatrix}.
		\end{align*}
	\end{lemma}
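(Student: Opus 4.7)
\medskip
\noindent\textbf{Proof proposal.} Since the identity involves two distinguished parameters $\chi \in \{\gamma/2, 2/\gamma\}$, the plan is to follow the classical BPZ strategy: insert a \emph{degenerate boundary vertex operator} of weight $-\chi$ on the $\sigma_2$-arc of $\partial\mathcal{S}$, write down a second order differential equation for the resulting boundary four-point function, solve it in closed form via hypergeometric functions, and finally read off the shift equation by matching the two-channel expansions at different singular points. This is precisely the Teschner-style trick that has been used for bulk and boundary Liouville theory in the physics literature and whose rigorous probabilistic implementation was developed in \cite{KRV20, RZ20b, AGS21} and, in the boundary case with bulk cosmological constant, in \cite{arsz-structure-constants}.

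The first main step is to define, via GMC regularization, the correlator
\[
F(t) := \LF_{\mathcal{S}}^{(\beta_1,+\infty),(\beta_2,-\infty),(\beta_3,0)}\!\left[V_{-\chi}(t+i\pi)\,\exp\bigl(-\mu_\phi(\mathcal{S}) - \textstyle\sum_{i=1}^3 \mu_B(\sigma_i)\nu_\phi(\partial_i\mathcal{S})\bigr)\right],\qquad t\in\bbR,
\]
and to show (as in \cite[Section 3]{arsz-structure-constants}) that it satisfies the BPZ equation
\[
\left(\partial_t^2 + \chi^2\, \mathcal{L}_{-2}^{\text{eff}}\right) F(t) = 0,
\]
coming from the level-two null vector of the degenerate representation. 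The derivation uses Gaussian integration by parts on the insertion $V_{-\chi}$ against the GMC measures; this is the place where one must do real analysis, controlling boundary integrals and regularizing the singular behaviour of $V_{-\chi}$ both near the other vertex insertions and near the corners of $\mathcal{S}$. After a conformal map to the unit disk the ODE reduces to a hypergeometric equation in a cross-ratio variable, whose two linearly independent solutions have explicit Gauss-hypergeometric expressions.

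The second main step is the asymptotic analysis of $F(t)$. Two kinds of limits are relevant. (i) As $t\to -\infty$, $V_{-\chi}(t+i\pi)$ collides with the insertion $V_{\beta_2}$ at $-\infty$, and the boundary OPE yields two fusion channels $V_{\beta_2 - \chi}$ and $V_{\beta_2 + \chi}$ weighted by the standard Liouville OPE coefficients (essentially the $\Gamma$-function prefactors in the lemma). This extracts $H$ at $\beta_2\pm\chi$ with the same $\sigma_i$'s. (ii) When $t$ is real and large, the degenerate field couples to the boundary cosmological constants through the fundamental identity
\[
(\partial_t^2 - \chi^2 \mu_B^2 e^{\chi\phi})\, V_{-\chi} = 0,
\]
which produces shifts by $\pm\chi$ in $\beta_1$ and $\beta_3$ and linear combinations of the $g_\chi(\sigma_i)$'s. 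Combining these with the standard reflection relations of $H$ and organizing the endpoint data in the language of \cite[Section 2-3]{arsz-structure-constants} yields the precise form of the shift equation.

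The final step is to enforce that a given linear combination of $F$ and its boundary values vanishes identically in $t$, which is guaranteed by the unique connection matrix of the Gauss hypergeometric equation between its singular points at $0,1,\infty$. Reading off the coefficients of this unique relation at the two chosen singularities produces the displayed identity. The main technical obstacle is the rigorous justification of step one: the BPZ equation for the boundary four-point function with both bulk and boundary cosmological terms involves commuting limits of GMC approximations, handling the reflection at $\beta = Q$, and showing that the singular contributions at the colliding vertex insertions produce exactly the OPE coefficients predicted formally; this requires the uniform moment estimates and Girsanov arguments in the spirit of \cite{RZ20b,arsz-structure-constants}. The meromorphic extension of $H$ afforded by Proposition~\ref{prop-H-extend} then removes the Seiberg-bound restrictions and gives the stated identity for all $\beta_i,\sigma_1,\sigma_2\in\mathbb{C}$.
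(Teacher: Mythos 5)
The first thing to note is that the paper does not prove this statement at all: Lemma~\ref{lem-full_shift_H} is imported verbatim from \cite[Theorem 3.1]{arsz-structure-constants}, and the only accompanying remark is that the hypothesis $\gamma\neq\sqrt2$ appearing there can be dropped once \cite[Theorem 1.1]{arsz-structure-constants} is known. So what you have written is not an alternative to anything in this paper; it is a sketch of the proof that lives in the cited reference. Judged on those terms, your outline does capture the correct overall strategy (degenerate boundary insertion of weight $-\chi$, a BPZ-type second order equation for the resulting deformed correlation function, reduction to a hypergeometric equation, and extraction of the shift relation from the connection coefficients between expansions at different singular points), which is indeed how the shift equations for $H$ are obtained in \cite{arsz-structure-constants}, following the earlier programme of \cite{KRV20,RZ20b}.

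As a proof, however, the proposal has genuine gaps beyond the technical work you explicitly defer. First, the channel bookkeeping you describe does not match the identity you are asked to prove: you say the $t\to-\infty$ collision with $V_{\beta_2}$ produces the two channels $\beta_2\pm\chi$, but the displayed relation involves $H$ at $(\beta_1,\beta_2+\chi,\beta_3)$ on one side and at $(\beta_1\mp\chi,\beta_2,\beta_3)$ on the other; the actual derivation must compare expansions of the four-point function near two \emph{different} insertions (so that shifts in $\beta_1$ and in $\beta_2$ both appear) and track how the boundary cosmological constants $\sigma_2\mapsto\sigma_2+\frac{\chi}{2}$ change across the degenerate insertion, none of which your sketch pins down. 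Second, the case $\chi=\frac{2}{\gamma}$ cannot be handled by the same direct GMC computation as $\chi=\frac{\gamma}{2}$: the insertion $e^{-\frac{1}{\gamma}\phi}$ does not yield a probabilistically convergent correlation function in the same way, and in \cite{arsz-structure-constants} the dual shift equation requires a separate argument (this is also where restrictions such as $\gamma\neq\sqrt2$ enter, which the present paper removes only by invoking \cite[Theorem 1.1]{arsz-structure-constants}). Finally, the ``fundamental identity'' $(\partial_t^2-\chi^2\mu_B^2e^{\chi\phi})V_{-\chi}=0$ you invoke is a heuristic, not a statement one can integrate against GMC as written; making it rigorous is precisely the content of the hardest step, which your proposal names but does not carry out. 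In short: right strategy, but as it stands this is a plan for the proof in \cite{arsz-structure-constants}, not a proof, and the paper itself simply cites that result.
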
	
    The statement of \cite[Theorem 3.1]{arsz-structure-constants} differs from 
    Lemma~\ref{lem-full_shift_H} in that it has the additional assumption that $\gamma \neq \sqrt2$; as explained at the start of Section 3 of \cite{arsz-structure-constants}, this assumption can be removed given \cite[Theorem 1.1]{arsz-structure-constants}.
    
	We fix $\chi = \frac\gamma2$ so $g_\chi(\sigma) = \left( \sin(\frac{\pi\gamma^2}4)\right)^{-1/2} \cos\left( 2 \pi \chi (\sigma - \frac Q2)\right)$, and set $\sigma_1 = \sigma_2 = \sigma_3 = -\frac1{2\gamma} + \frac Q2$ so $g_\chi(\sigma_i) = 0$ for $i = 1,2,3$. Then, Lemma~\ref{lem-full_shift_H} gives
    \begin{align}\label{eq-shift-2}
		&-\frac{(\frac\gamma2)^2}{\Gamma(1-\frac{\gamma^2}{4})}  \Gamma(1-\frac\gamma2\beta_2)  g_{\frac\gamma2}(-\frac1{2\gamma} + \frac Q2+\frac{\beta_2}{2}) 
		H^{(\beta_1, \beta_2+ \frac\gamma2, \beta_3)}_{(0,\mu_2,0)}\\
        &=\frac{\Gamma(\frac\gamma2(\beta_1-\frac\gamma2))}{\Gamma( - q \frac{\gamma^2}{4} ) \Gamma(-1 + \frac\gamma2(\beta_1 + \beta_2-\gamma+q\frac{\gamma}{2}) ) } H^{(\beta_1 - \frac\gamma2, \beta_2, \beta_3)}_{(0,0,0)} \nonumber\\
	&- \frac{(\frac\gamma2)^2 \pi }{\Gamma(1-\frac{\gamma^2}{4})}  \frac{  g_{\frac\gamma2}(-\frac1{2\gamma} + \frac Q2-\frac{\beta_1}{2}+\frac{\gamma}{4} ) \Gamma(1-\frac\gamma2\beta_1) }{\sin(\pi\frac\gamma2(\frac\gamma2-\beta_1))\Gamma(1- \frac\gamma2(\beta_1-\frac\gamma2+q\frac{\gamma}{2} )) \Gamma( \frac\gamma2\beta_2 - (\frac\gamma2)^2 +q\frac{\gamma^2}{4}) }  H^{(\beta_1+ \frac\gamma2, \beta_2, \beta_3)}_{(0,0,0)}. \nonumber
    \end{align}
	where $\mu_2 = g_{\frac\gamma2}(\sigma_2 + \frac\gamma4) = (\sin(\frac{\pi\gamma^2}4))^{-1/2} \cos(\pi (\frac{\gamma^2}4 - \frac12))> 0$. 

    To prove Proposition~\ref{prop-H-special}, we will start with~\eqref{eq-shift-2} and apply the following two lemmas which give certain limiting values of $H$ via its probabilistic definition (Definition~\ref{def-H}).
	
	\begin{lemma}\label{lem-limit-beta_2}
		Suppose $\beta_1, \beta_3 < Q$  satisfy $\beta_1+\beta_3 > Q$ and $|Q - \beta_1|< \beta_3$. Then
		\[\lim_{\beta_2 \uparrow \frac2\gamma}  \Gamma(1-\frac\gamma2\beta_2)  g_{\frac\gamma2}(-\frac1{2\gamma} + \frac Q2+\frac{\beta_2}{2}) 
		H^{(\beta_1, \beta_2+ \frac\gamma2, \beta_3)}_{(0,\mu_2,0)}  = 0.\]
	\end{lemma}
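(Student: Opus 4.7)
The proof strategy has two parts: (a) show the prefactor $\Gamma(1-\frac{\gamma\beta_2}{2})\,g_{\frac{\gamma}{2}}(-\frac{1}{2\gamma}+\frac{Q}{2}+\frac{\beta_2}{2})$ is bounded in the limit, and (b) show $H^{(\beta_1,\beta_2+\frac{\gamma}{2},\beta_3)}_{(0,\mu_2,0)}\to 0$. For (a), with $\epsilon := \frac{2}{\gamma}-\beta_2 \to 0^+$, the cosine inside $g_{\gamma/2}$ expands as $\cos(\frac{\pi}{2} - \frac{\pi\gamma\epsilon}{2}) = \sin(\frac{\pi\gamma\epsilon}{2}) \sim \frac{\pi\gamma\epsilon}{2}$, while $\Gamma(1-\frac{\gamma\beta_2}{2}) = \Gamma(\frac{\gamma\epsilon}{2}) \sim \frac{2}{\gamma\epsilon}$. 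The simple pole and simple zero cancel, giving the finite limit $\pi(\sin(\frac{\pi\gamma^2}{4}))^{-1/2}$.

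For (b), I would apply the disintegration from Lemma~\ref{lm:bdry-length-law} with respect to the quantum length of $\partial_2 \mathcal{S} = \bbR + i\pi$. The assumptions $\beta_1, \beta_3 < Q$, $|Q-\beta_1|<\beta_3$, and $\beta_1+\beta_3 > Q$ ensure that, for $\beta_2 < \frac{2}{\gamma}$ close enough to $\frac{2}{\gamma}$, both the lemma's hypothesis $|\beta_1 - (\beta_2+\frac{\gamma}{2})|<\beta_3$ and the Seiberg bound $\bar\beta > 2Q$ (with $\bar\beta = \beta_1+\beta_2+\frac{\gamma}{2}+\beta_3$) hold. Setting $s = \frac{\bar\beta-2Q}{\gamma}$, this together with $\LF_{\mathcal{S},\ell}[e^{-\mu_\phi}]\leq |\LF_{\mathcal{S},\ell}|$ yields
\begin{equation*}
H^{(\beta_1,\beta_2+\frac{\gamma}{2},\beta_3)}_{(0,\mu_2,0)} = \int_0^\infty e^{-\mu_2\ell}\, \LF_{\mathcal{S},\ell}^{(\beta_1,+\infty),(\beta_2+\frac{\gamma}{2},-\infty),(\beta_3,0)}[e^{-\mu_\phi(\mathcal{S})}]\,d\ell \;\leq\; \frac{2}{\gamma}\,\bar H^{(\beta_1,\beta_2+\frac{\gamma}{2},\beta_3)}_{(0,1,0)} \cdot \frac{\Gamma(s)}{\mu_2^s}.
\end{equation*}
Note that $\mu_2 > 0$ since $\pi(\frac{\gamma^2}{4} - \frac{1}{2}) \in (-\frac{\pi}{2}, \frac{\pi}{2})$ for $\gamma \in (0,2)$, and $s$ stays bounded away from $0$ and $\infty$ near $\beta_2 = \frac{2}{\gamma}$, so $\Gamma(s)/\mu_2^s$ is uniformly bounded.

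It remains to show $\bar H^{(\beta_1,\beta_2+\frac{\gamma}{2},\beta_3)}_{(0,1,0)} \to 0$. Inspecting~\eqref{eq-bar-H}, every factor stays finite and positive in the limit except the denominator factor $\Gamma_{\frac{\gamma}{2}}(Q-(\beta_2+\frac{\gamma}{2})) = \Gamma_{\frac{\gamma}{2}}(\frac{2}{\gamma}-\beta_2)$, which diverges because $\Gamma_{\frac{\gamma}{2}}$ has a simple pole at the origin (visible from the shift equation~\eqref{eqn-gamma-shift}). This forces $\bar H \to 0$, and combined with the bounds above it concludes the proof. The main bookkeeping step is checking that the other arguments of $\Gamma_{\frac{\gamma}{2}}$ remain in the pole-free region (away from non-positive reals of the form $-m\frac{\gamma}{2} - n\frac{2}{\gamma}$), which is where the hypotheses $\beta_1, \beta_3 < Q$ and $|Q-\beta_1|<\beta_3$ are used.
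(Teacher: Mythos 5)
Your proposal is correct and follows essentially the same route as the paper's proof: bound the prefactor by the cancellation of the simple pole of $\Gamma(1-\frac{\gamma}{2}\beta_2)$ against the simple zero of $g_{\frac{\gamma}{2}}$, then drop the area term, disintegrate over $\nu_\phi(\partial_2\mathcal S)$ via Lemma~\ref{lm:bdry-length-law} to bound $H^{(\beta_1,\beta_2+\frac\gamma2,\beta_3)}_{(0,\mu_2,0)}$ by $\frac2\gamma \bar H^{(\beta_1,\beta_2+\frac\gamma2,\beta_3)}_{(0,1,0)}\Gamma(s)\mu_2^{-s}$, and conclude from the divergence of the denominator factor $\Gamma_{\frac\gamma2}(Q-(\beta_2+\frac\gamma2))$ in~\eqref{eq-bar-H}. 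Your version is if anything slightly more explicit (the exact limit of the prefactor and the verification that the remaining $\Gamma_{\frac\gamma2}$ arguments stay pole-free under the stated hypotheses) than the paper's argument.
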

	\begin{proof}
		At the point $\beta_2 = \frac2\gamma$, the meromorphic function $\Gamma(1 - \frac\gamma2 \beta_2)$ has a simple pole whereas $g_{\frac\gamma2}(-\frac1{2\gamma} + \frac Q2+\frac{\beta_2}{2}) = (\sin(\frac{\pi\gamma^2}4))^{-1/2} \cos(-\frac\pi2 + \pi\gamma \beta_2)$ has a simple root. Thus it suffices to show $\lim_{\beta_2 \uparrow \frac2\gamma} 	H^{(\beta_1, \beta_2+ \frac\gamma2, \beta_3)}_{(0,\mu_2,0)} = 0$. By Definition~\ref{def-H} and Lemma~\ref{lm:bdry-length-law}, 
        \begin{equation*}
            \begin{gathered}
                H^{(\beta_1, \beta_2+ \frac\gamma2, \beta_3)}_{(0,\mu_2,0)} <  \LF^{(\beta_1, +\infty), (\beta_2 + \frac\gamma2, -\infty), (\beta_3, 0)}[e^{-\mu_2 \nu_\phi(\partial_2\mathcal S)}] =  \int_0^\infty \frac2\gamma \bar{H}^{(\beta_1, \beta_2+ \frac\gamma2, \beta_3)}_{(0,1,0)} \ell^{(\sum \beta_i +\frac\gamma2 - 2Q)/\gamma - 1} e^{-\mu_2 \ell}\, d\ell \\
           \qquad \qquad \qquad \qquad \qquad \qquad \qquad \qquad \qquad \qquad \qquad =  \frac2\gamma \bar{H}^{(\beta_1, \beta_2+ \frac\gamma2, \beta_3)}_{(0,1,0)} \Gamma(s)\mu_2^{-s}, \quad s = \frac1\gamma(\sum \beta_i + \frac\gamma2 - 2Q).
            \end{gathered}
        \end{equation*}
        The last equality follows from the integral definition of the Gamma function. We have $\lim_{\beta_2 \uparrow \frac2\gamma}\Gamma(s)\mu_2^{-s} = \Gamma(s_0) \mu_2^{s_0}$ where $s_0 = \beta_1 + \beta_3 - Q$, but $\lim_{\beta_2 \uparrow \frac2\gamma} \bar{H}^{(\beta_1, \beta_2+ \frac\gamma2, \beta_3)}_{(0,1,0)} = 0$ due to the term $\Gamma_{\frac\gamma2} (Q-(\beta_2+\frac\gamma2))^{-1}$ in the formula for $\bar H$~\eqref{eq-bar-H} (see the text below~\eqref{eqn-double-gamma} for the poles of $\Gamma_{\frac\gamma2}$). We conclude that $\limsup_{\beta_2\uparrow \frac2\gamma} H^{(\beta_1, \beta_2+ \frac\gamma2, \beta_3)}_{(0,\mu_2,0)} \leq 0$. This matches the trivial lower bound $H^{(\beta_1, \beta_2+ \frac\gamma2, \beta_3)}_{(0,\mu_2,0)} \geq 0$, so $\lim_{\beta_2 \uparrow \frac2\gamma} 	H^{(\beta_1, \beta_2+ \frac\gamma2, \beta_3)}_{(0,\mu_2,0)} = 0$. 
	\end{proof}
	\begin{lemma}\label{lem-limit-s}
		Suppose $\beta_2, \beta_3 < Q$ and $2Q - \beta_2 - \beta_3 < Q$. Then
		\[ \lim_{\beta_1 \downarrow 2Q - \beta_2 - \beta_3} (\frac12 \sum_i \beta_i  - Q)H^{(\beta_1,\beta_2,\beta_3)}_{(0,0,0)} = 1.\]
    \end{lemma}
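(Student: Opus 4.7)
My approach is to integrate out the zero mode of the Liouville field to express $H^{(\beta_1,\beta_2,\beta_3)}_{(0,0,0)}$ as a Gamma factor times a negative moment of Gaussian multiplicative chaos, and then take the limit by extracting the simple pole of $\Gamma$ at $0$ via $s\Gamma(s)=\Gamma(s+1)$.

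Concretely, I would decompose a sample $\phi\sim\LF_{\mathcal S}^{(\beta_1,+\infty),(\beta_2,-\infty),(\beta_3,0)}$ as $\phi=\phi_0+\mathbf{c}$, where
\[
\phi_0(z):=h(z)+\tfrac{\beta_1+\beta_2-2Q}{2}|\mathrm{Re}\,z|+\tfrac{\beta_1-\beta_2}{2}\mathrm{Re}\,z+\tfrac{\beta_3}{2}G_{\mathcal S}(z,0),\qquad h\sim P_{\mathcal S},
\]
and $\mathbf{c}$ is an independent random constant with law $e^{ac}\,dc$ for $a:=\tfrac12\sum_i\beta_i-Q$. Since $\mu_\phi(\mathcal S)=e^{\gamma\mathbf{c}}M$ with $M:=\mu_{\phi_0}(\mathcal S)$, Fubini followed by the substitution $u=e^{\gamma c}M$ (using $M>0$ a.s.) yields
\[
H^{(\beta_1,\beta_2,\beta_3)}_{(0,0,0)}
=\int_{-\infty}^{\infty}e^{ac}\,\bbE\!\left[\exp(-e^{\gamma c}M)\right]dc
=\frac{\Gamma(a/\gamma)}{\gamma}\,\bbE\!\left[M^{-a/\gamma}\right].
\]
Multiplying by $a$ and invoking $s\Gamma(s)=\Gamma(s+1)$ with $s=a/\gamma$ converts this into
\[
\bigl(\tfrac12\textstyle\sum_i\beta_i-Q\bigr)\,H^{(\beta_1,\beta_2,\beta_3)}_{(0,0,0)}
=\Gamma\!\left(\tfrac{a}{\gamma}+1\right)\,\bbE\!\left[M^{-a/\gamma}\right].
\]

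To finish I would send $\beta_1\downarrow 2Q-\beta_2-\beta_3$, i.e.\ $a\downarrow 0$. The Gamma factor tends to $\Gamma(1)=1$; since $M>0$ almost surely, $M^{-a/\gamma}\to 1$ pointwise; and the crude bound $M^{-s}\le 1+M^{-a_0/\gamma}$ valid for $s\in(0,a_0/\gamma]$ provides a dominator. So dominated convergence gives $\bbE[M^{-a/\gamma}]\to 1$ as soon as $\bbE[M^{-a_0/\gamma}]<\infty$ for some small $a_0>0$.

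The only analytic input is this negative-moment estimate, which is exactly the statement that $H^{(\beta_1,\beta_2,\beta_3)}_{(0,0,0)}<\infty$ on an open parameter range containing the point $\sum_i\beta_i=2Q$. Under the hypotheses $\beta_2,\beta_3<Q$ and $\beta_1$ slightly larger than $2Q-\beta_2-\beta_3$, all three Seiberg bounds $\beta_i<Q$ and $\sum_i\beta_i>2Q$ hold strictly, and finiteness of this GMC negative moment is classical (see e.g.~\cite{RZ20b}). This is the only step with genuine work in it; everything else is a zero-mode Laplace computation followed by the $s\Gamma(s)=\Gamma(s+1)$ pole extraction.
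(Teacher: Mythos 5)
Your zero-mode computation is exactly the paper's: integrating out the additive constant $\mathbf{c}\sim e^{ac}dc$ gives $H^{(\beta_1,\beta_2,\beta_3)}_{(0,0,0)}=\frac1\gamma\Gamma(a/\gamma)\,\bbE[M^{-a/\gamma}]$ with $M=\mu_{\phi_0}(\mathcal S)$, and the pole extraction via $s\Gamma(s)=\Gamma(s+1)$ is the same. The gap is in the final limit. The field $\phi_0$ \emph{depends on} $\beta_1$ through the drift $\frac{\beta_1+\beta_2-2Q}{2}|\mathrm{Re}\,z|+\frac{\beta_1-\beta_2}{2}\mathrm{Re}\,z$, so $M=M_{\beta_1}$ is a family of random variables indexed by the limiting parameter, not a single fixed one. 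Consequently the assertions ``$M^{-a/\gamma}\to1$ pointwise'' and ``$M^{-s}\le 1+M^{-a_0/\gamma}$ provides a dominator'' do not make sense as written: both the base and the exponent move with $\beta_1$, and a dominator must be a fixed integrable random variable, not one that changes along the limit. As stated, the dominated convergence step fails (or at least is not justified), and this is precisely the point where the paper does the only real work in this direction.

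The paper's fix is a monotone sandwich: for $\mathrm{Re}\,z>0$ the drift equals $(\beta_1-Q)\mathrm{Re}\,z$, increasing in $\beta_1$, and for $\mathrm{Re}\,z<0$ it does not depend on $\beta_1$, so for $\beta_1\in(\beta_*,\beta^*)$ one has $\mu_{h_*}(\mathcal S)\le M_{\beta_1}\le \mu_{h^*}(\mathcal S)$ where $h_*,h^*$ are the fields at the two fixed endpoint values; since the exponent is negative this traps $\bbE[M_{\beta_1}^{-a/\gamma}]$ between negative moments of two \emph{fixed} GMC masses, to which dominated convergence applies legitimately, using finiteness of the relevant negative moment (\cite[Corollary 6.11]{hrv-disk} in the paper; your appeal to classical GMC negative-moment bounds serves the same purpose). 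So the repair is short and uses no idea beyond monotonicity in $\beta_1$ (or, alternatively, a.s.\ continuity of $M_{\beta_1}$ in $\beta_1$ together with a uniform dominator such as $1+\mu_{h_*}(\mathcal S)^{-a_0/\gamma}$), but some such uniformity argument must be supplied for your last step to be a proof.
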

    \begin{proof}
        By Definitions~\ref{def-lf-strip} and~\ref{def-H} we have 
        \[H^{(\beta_1,\beta_2,\beta_3)}_{(0,0,0)} = \bbE\left[\int_\bbR \exp(-\mu_{\hat h + c}(\mathcal S))  e^{(\frac12 \sum \beta_1 - Q)c}\, dc\right] = \bbE\left[\int_\bbR \exp(-e^{\gamma c}\mu_{\hat h}(\mathcal S))  e^{(\frac12 \sum \beta_1 - Q)c}\, dc\right]\]
        where $\bbE$ represents the expectation taken over the free field $h$ on the strip $\mathcal S$, and $\hat h(z) = h(z) + +\frac{\beta_1+\beta_2-2Q}{2}|\textup{Re}z|+\frac{\beta_1-\beta_2}{2}\textup{Re}z+\frac{\beta_3}{2}G_{\mathcal{S}}(z, s_3)$. Using the change of variables $y = e^{\gamma c} \mu_{\hat h}(\mathcal S)$, we obtain
        \[H^{(\beta_1,\beta_2,\beta_3)}_{(0,0,0)} = \bbE \left[ \int_0^\infty e^{-y} \left( \frac y {\mu_{\hat h}(\mathcal S)}\right)^{\frac1\gamma(\frac12\sum \beta_i - Q)} \frac1{\gamma y} \, dy \right] = \bbE[\mu_{\hat h}(\mathcal S)^{\frac1\gamma(Q - \frac12\sum\beta_i)}] \cdot \frac1\gamma \Gamma\left(\frac1\gamma(\frac12 \sum \beta_i - Q)\right). \]
        We have $\lim_{s \downarrow 0} s \Gamma(s) = \lim_{s \downarrow 0} \Gamma(s+1) = 1$, so it suffices to show $\bbE[\mu_{\hat h}(\mathcal S)^{\frac1\gamma(Q - \frac12\sum\beta_i)}]\to 1$. 

        Let $\beta_* = 2Q - \beta_2 - \beta_3$ and fix $\beta^* \in (Q - \beta_2 - \beta_3, Q)$, and consider $\beta_1 \in (\beta_*, \beta^*)$.
        Let $h_*$ be defined in the same way as $\hat h$ but with $\beta_1$ replaced by $\beta_* = 2Q - \beta_2 - \beta_3$, and similarly define $h^*$. We have $\mu_{h_*}(\mathcal S) < \mu_{\hat h}(\mathcal S) < \mu_{h^*}(\mathcal S)$ and $\frac1\gamma(Q - \frac12\sum_i \beta_i) < 0$, so 
        $ \bbE[\mu_{h^*}(\mathcal S)^{\frac1\gamma(Q - \frac12\sum\beta_i)}] \leq  \bbE[\mu_{\hat h}(\mathcal S)^{\frac1\gamma(Q - \frac12\sum\beta_i)}] \leq \bbE[\mu_{h_*}(\mathcal S)^{\frac1\gamma(Q - \frac12\sum\beta_i)}]$. Since $\bbE[\mu_{h_*}(\mathcal S)^{\frac1\gamma(Q - \frac12\sum\beta_i)}+1] < \infty$ \cite[Corollary 6.11]{hrv-disk} and $\lim_{\beta_1 \downarrow \beta_*}\frac1\gamma(Q - \frac12\sum\beta_i) = 0$,  the dominated convergence theorem implies the upper and lower bounds both converge to 1, hence $\lim_{ \beta_1 \downarrow \beta_*} \bbE[\mu_{\hat h}(\mathcal S)^{\frac1\gamma(Q - \frac12\sum\beta_i)}]  = 1$ as desired. 
        %Since $\bbP[\mu_{h_*}(\mathcal S) < \eps]$ and $\bbP[\mu_{h^*}(\mathcal S) < \eps]$ decay superpolynomially in $\eps$ [REF], the dominated convergence theorem yields $\lim_{\beta_1 \downarrow 2Q - \beta_2 - \beta_3} \bbE[\mu_{h^*}(\mathcal S)^{\frac1\gamma(Q - \frac12\sum\beta_i)}]  = \lim_{\beta_1 \downarrow 2Q - \beta_2 - \beta_3} \bbE[\mu_{h_*}(\mathcal S)^{\frac1\gamma(Q - \frac12\sum\beta_i)}]  = 1$, hence $\lim_{\beta_1 \downarrow 2Q - \beta_2 - \beta_3} \bbE[\mu_{\hat h}(\mathcal S)^{\frac1\gamma(Q - \frac12\sum\beta_i)}]  = 1$ also.
\end{proof}

    We are now ready to prove Proposition~\ref{prop-H-special}. 

    \begin{proof}[Proof of Proposition~\ref{prop-H-special}]
    Consider $\beta_1, \beta_3 < Q$ such that $\beta_1 + \beta_3 > Q$ and $|Q - \beta_1| < \beta_3$. Taking the $\beta_2 \uparrow \frac2\gamma$ limit gives zero in the left hand side of~\eqref{eq-shift-2}, hence
	
			\begin{align}\label{eq-shift-3}
		&\frac{\Gamma(\frac\gamma2(\beta_1-\frac\gamma2))}{\Gamma( - q \frac{\gamma^2}{4} ) \Gamma(-1 + \frac\gamma2(\beta_1 + \frac2\gamma-\gamma+q\frac{\gamma}{2}) ) } H^{(\beta_1 - \frac\gamma2, \frac2\gamma, \beta_3)}_{(0,0,0)}\\
		&=  \frac{(\frac\gamma2)^2 \pi }{\Gamma(1-\frac{\gamma^2}{4})}  \frac{  g_{\frac\gamma2}(-\frac1{2\gamma} + \frac Q2-\frac{\beta_1}{2}+\frac{\gamma}{4} ) \Gamma(1-\frac\gamma2\beta_1) }{\sin(\pi\frac\gamma2(\frac\gamma2-\beta_1))\Gamma(1- \frac\gamma2(\beta_1-\frac\gamma2+q\frac{\gamma}{2} )) \Gamma( 1 - (\frac\gamma2)^2 +q\frac{\gamma^2}{4}) }  H^{(\beta_1+ \frac\gamma2, \frac2\gamma, \beta_3)}_{(0,0,0)}. \nonumber
	\end{align}
	Set $\beta_3 = \beta \in (\gamma, Q)$, so~\eqref{eq-shift-3} holds when $\beta_1 \in (Q + \gamma - \beta, Q)$. By Lemma~\ref{lem-limit-s}, with $q = \frac1\gamma(Q + \gamma - \beta - \beta_1)$,
	\[\lim_{\beta_1 \downarrow Q + \gamma - \beta}\frac1{\Gamma(-q \frac{\gamma^2}4)} H^{(\beta_1 - \frac\gamma2, \frac2\gamma, \beta)}_{(0,0,0)} = \lim_{\beta_1 \downarrow Q + \gamma - \beta} \frac{-\gamma^2}{4 \Gamma(1-q \frac{\gamma^2}4)} q H^{(\beta_1 - \frac\gamma2, \frac2\gamma, \beta)}_{(0,0,0)} =  \frac{\gamma}2, \]
	so taking the limit as $\beta_1 \downarrow Q + \gamma -\beta$ (and so $q \to 0$)  in~\eqref{eq-shift-3}  gives
		\[
		\frac\gamma2\frac{\Gamma(\frac\gamma2(Q+\frac\gamma2 - \beta))}{\Gamma(\frac\gamma2(Q - \beta)) } =  \frac{(\frac\gamma2)^2 \pi }{\Gamma(1-\frac{\gamma^2}{4})}  \frac{  g_{\frac\gamma2}(-\frac1{2\gamma} + \frac Q2 + \frac12(\beta - Q - \frac\gamma2) ) \Gamma(\frac\gamma2(\beta - \frac{3\gamma}2)) }{\sin(\pi\frac\gamma2(\beta - Q - \frac\gamma2))\Gamma(\frac\gamma2(\beta-\gamma)) \Gamma( 1 - \frac{\gamma^2}4) }  H^{(Q + \frac{3\gamma}2 - \beta, \frac2\gamma, \beta)}_{(0,0,0)}.\]
	Rearranging and using 
	$g_{\frac\gamma2}(-\frac1{2\gamma} + \frac Q2 + \frac12(\beta - Q - \frac\gamma2)) = (\sin \frac{\pi\gamma^2}4)^{-1/2} \sin(\pi\frac\gamma2(\beta-Q-\frac\gamma2))$ gives the claim.
    \end{proof}

\subsection{The $p_\gamma(\theta)$ and $c_\gamma(\theta)$ formula}\label{subsec:p-theta-result}

We are now ready to compute the constant $p_\gamma(\theta)$  using ~\eqref{eq:lem:p-qt-B} and Proposition~\ref{prop-H-special}.

\begin{proposition}\label{prop:p-c-theta}
    In the setting of Proposition~\ref{prop:p-qt},  if $W^\rmL\neq \frac{\gamma^2}{2}$, then
    \begin{equation}\label{eq:p-c-theta-a}
        p_\gamma(\theta)c_\gamma(\theta) = \frac{\sin((\frac{1}{2}-\frac{\gamma^2}{8})(\pi-2\theta))}{\sin(\frac{\pi\gamma^2}{4})}. 
    \end{equation}
    Similarly, if $W^\rmR\neq \frac{\gamma^2}{2}$, then
    \begin{equation}\label{eq:p-c-theta-b}
        (1-p_\gamma(\theta))c_\gamma(\theta) = \frac{\sin((\frac{1}{2}-\frac{\gamma^2}{8})(\pi+2\theta))}{\sin(\frac{\pi\gamma^2}{4})}.
    \end{equation}
\end{proposition}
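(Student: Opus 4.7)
The plan is to prove~\eqref{eq:p-c-theta-a} by combining Lemma~\ref{lem:p-qt-A} with Proposition~\ref{prop-H-special}; the proof of~\eqref{eq:p-c-theta-b} is identical once $W^{\rmL}$ is replaced by $W^{\rmR}$ (and hence $\beta^{\rmL},\wt{\beta^{\rmL}}$ by $\beta^{\rmR},\wt{\beta^{\rmR}}$).

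First I would set $\beta:=\wt{\beta^{\rmL}}=\gamma+\frac{W^{\rmL}}{\gamma}$, noting that then $\beta^{\rmL}=Q+\frac{3\gamma}{2}-\beta$ and that $W^{\rmL}\in(0,2-\frac{\gamma^2}{2})$ translates exactly to $\beta\in(\gamma,Q)$. To apply Proposition~\ref{prop-H-special} to the denominator $H_{(0,0,0)}^{(\beta^{\rmL},\wt{\beta^{\rmL}},\frac{2}{\gamma})}$ appearing in~\eqref{eq:lem:p-qt-B}, I would use the fact that $H_{(0,0,0)}^{(\beta_1,\beta_2,\beta_3)}$ is symmetric under permutations of $(\beta_1,\beta_2,\beta_3)$. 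This symmetry follows from Definition~\ref{def-H}: the three distinguished boundary points $+\infty,-\infty,0$ of $\mathcal{S}$ can be interchanged by conformal automorphisms of $\mathcal{S}$, and when all $\mu_i=0$ no boundary-length data are tracked, so conformal covariance of the Liouville field yields the claim.

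For the numerator, I would compute $\bar{H}_{(0,1,0)}^{(\beta^{\rmL},\wt{\beta^{\rmL}},\frac{2}{\gamma})}$ directly from~\eqref{eq-bar-H}. Since $\bar{\beta}=\beta^{\rmL}+\wt{\beta^{\rmL}}+\frac{2}{\gamma}=2Q+\gamma$, many factors collapse: $\Gamma(\frac{\bar{\beta}-2Q}{\gamma})=\Gamma(1)$, the $(2\pi)$-exponent vanishes, and the non-double-gamma prefactor reduces to $(\frac{2}{\gamma})^{-\gamma^2/4}\Gamma(1-\frac{\gamma^2}{4})$. Forming the ratio $\bar{H}/H$ and applying the shift equations~\eqref{eqn-gamma-shift} to the $\beta$-dependent double-gamma combination
\[
\frac{\Gamma_{\gamma/2}(Q+\frac{\gamma}{2}-\beta)\,\Gamma_{\gamma/2}(\beta-\gamma)}{\Gamma_{\gamma/2}(Q-\beta)\,\Gamma_{\gamma/2}(\beta-\frac{3\gamma}{2})}
\]
converts it into ordinary $\Gamma$-factors that exactly cancel those appearing in Proposition~\ref{prop-H-special}, leaving $\Gamma(\frac{\gamma}{2}(Q+\frac{\gamma}{2}-\beta))\,\Gamma(\frac{\gamma}{2}(\beta-\gamma))=\Gamma(1-\frac{W^{\rmL}}{2})\,\Gamma(\frac{W^{\rmL}}{2})$ in the denominator. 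The reflection formula $\Gamma(z)\Gamma(1-z)=\pi/\sin(\pi z)$ then produces $\sin(\frac{\pi W^{\rmL}}{2})=\sin((\frac{1}{2}-\frac{\gamma^2}{8})(\pi-2\theta))$, which is the desired numerator.

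It remains to verify that all $\beta$-independent constants combine to $1/\sin(\frac{\pi\gamma^2}{4})$. These include the prefactor $\frac{2}{\gamma\sqrt{\sin(\pi\gamma^2/4)}}$ from~\eqref{eq:lem:p-qt-B}, the factor $\frac{2}{\gamma\pi}\Gamma(1-\frac{\gamma^2}{4})^2\sqrt{\sin(\pi\gamma^2/4)}$ from Proposition~\ref{prop-H-special}, the factors $(\frac{2}{\gamma})^{-\gamma^2/4}\Gamma(1-\frac{\gamma^2}{4})$ from $\bar{H}$, and the double-gamma ratios $\Gamma_{\gamma/2}(\frac{\gamma}{2})/\Gamma_{\gamma/2}(\frac{2}{\gamma})$ and $\Gamma_{\gamma/2}(Q-\gamma)/\Gamma_{\gamma/2}(Q)$. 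Two further applications of~\eqref{eqn-gamma-shift} give $\Gamma_{\gamma/2}(\frac{\gamma}{2})/\Gamma_{\gamma/2}(\frac{2}{\gamma})=\frac{\gamma}{2}$ and $\Gamma_{\gamma/2}(Q-\gamma)/\Gamma_{\gamma/2}(Q)=\frac{1}{2\pi}\Gamma(1-\frac{\gamma^2}{4})(\frac{\gamma}{2})^{\gamma^2/4-1}$, after which a direct bookkeeping check confirms the desired cancellation. The main obstacle is precisely this last step: simultaneously tracking the powers of $\frac{2}{\gamma}$, the factors of $\Gamma(1-\frac{\gamma^2}{4})$ and $\sqrt{\sin(\pi\gamma^2/4)}$, and the ordinary-vs.-double-gamma reductions without accumulating sign or exponent errors.
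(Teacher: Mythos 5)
Your proposal is correct and follows essentially the same route as the paper's proof: combine~\eqref{eq:lem:p-qt-B} with Proposition~\ref{prop-H-special}, use the permutation symmetry of $H_{(0,0,0)}$, reduce the double-gamma ratio in $\bar H_{(0,1,0)}^{(\beta^\rmL,\wt{\beta^\rmL},\frac2\gamma)}$ via the shift equations~\eqref{eqn-gamma-shift} (your direct computation reproduces the paper's identity~\eqref{eq:bar-H-beta}), and finish with the reflection formula; your intermediate evaluations $\Gamma_{\gamma/2}(\frac\gamma2)/\Gamma_{\gamma/2}(\frac2\gamma)=\frac\gamma2$ and $\Gamma_{\gamma/2}(Q-\gamma)/\Gamma_{\gamma/2}(Q)=\frac1{2\pi}\Gamma(1-\frac{\gamma^2}4)(\frac\gamma2)^{\gamma^2/4-1}$ are correct and the constants do combine to $1/\sin(\frac{\pi\gamma^2}4)$. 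The only (harmless) difference is that you parametrize by $\beta=\wt{\beta^\rmL}\in(\gamma,Q)$, which keeps you squarely inside the stated range of Proposition~\ref{prop-H-special}, whereas the paper phrases the substitution with $\beta=\beta^\rmL$.
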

\begin{proof}
 {Equation~\eqref{eq-bar-H} expresses $\bar H$ in terms of ratios of double-gamma functions.} Using the shift equations in~\eqref{eqn-gamma-shift}  {to simplify these ratios, we obtain that} % along with the equation %\eqb\label{eq:Gamma}\Gamma(z)\Gamma(1-z) = \frac{\pi}{\sin(\pi z)}, \ \ z\notin\mathbb{Z}, \eqe
    %one can prove that 
    for $\beta\in(\frac{3\gamma}{2},Q+\frac{\gamma}{2})\backslash\{Q\}$ and $\wt{\beta} = Q+\frac{3\gamma}{2}-\beta$, \begin{equation}\label{eq:bar-H-beta}
        \bar{H}_{(0,1,0)}^{(\beta,\wt{\beta},\frac{2}{\gamma})} = \frac{\Gamma(1-\frac{\gamma^2}{4})^2}{\Gamma(\frac{\gamma}{2}(Q-\beta))\Gamma(\frac{\gamma}{2}(Q-\wt{\beta}))}.
    \end{equation}
    Also observe that by Lemma~\ref{lem-H-thick} and Lemma~\ref{lem-H-thin}, $H_{(0,0,0)}^{(\beta_1,\beta_2,\beta_3)}$ is symmetric over $(\beta_1,\beta_2,\beta_3)$. 
     {Therefore, we obtain formulae for $p_\gamma(\theta)c_\gamma(\theta)$ and $(1-p_\gamma(\theta))c_\gamma(\theta)$} %Therefore the claim follows from~\eqref{eq:lem:p-qt-B} 
    by setting $\beta = \beta^\rmL$ and $\beta=\beta^\rmR$ in~\eqref{eq:bar-H-beta} and Proposition~\ref{prop-H-special}, together with the formula
    $$\Gamma(z)\Gamma(1-z)=\frac{\pi}{\sin(\pi z)}, \ \ z\notin\mathbb{Z}.$$
     {To obtain~\eqref{eq:p-c-theta-a} and~\eqref{eq:p-c-theta-b}, we rewrite these formulae by expressing $\beta^\textup{L}$ and $\beta^\textup{R}$ in terms of $W^\textup{L}$ and $W^\textup{R}$ as in Lemma~\ref{lem:p-qt-A}, which are in turn expressed in terms of $\theta$ in Lemma~\ref{prop:p-qt}.}
\end{proof}

% \begin{proposition}
%     For each $\theta\in(-\frac{\pi}{2},\frac{\pi}{2})$, we have the equation
%     \begin{equation}\label{eq:p-theta-0}
%         \frac{p_\gamma(\theta)}{1-p_\gamma(\theta)} =  \frac{\sin((\frac{1}{2}-\frac{\gamma^2}{8})(\pi-2\theta))}{\sin((\frac{1}{2}-\frac{\gamma^2}{8})(\pi+2\theta))}, \,\, c_\gamma(\theta) = \frac{\cos((1-\frac{\gamma^2}{4})\theta)}{\sin(\frac{\pi\gamma^2}{8})}.
%     \end{equation}
% \end{proposition}

\begin{proof}[Proof of Theorem~\ref{thm:main} and Theorem~\ref{thm:main-1}]
    For $\theta\in(-\frac{\pi}{2},\frac{\pi}{2})$ satisfying the constraints in Proposition~\ref{prop:p-qt},  ~\eqref{eq:p-theta} and~\eqref{eq:c-theta} holds by Proposition~\ref{prop:p-c-theta}. For $\theta$ such that $W^\rmL=\frac{\gamma^2}{2}$ or $W^\rmR=\frac{\gamma^2}{2}$, the formula of $p_\gamma(\theta)$ in ~\eqref{eq:p-theta} follows from the monotonicity of $p_\gamma(\theta)$ in Lemma~\ref{lem:dec}. If $\gamma^2\neq\frac{4}{3}$, then it cannot hold that $W^\rmL=W^\rmR=\frac{\gamma^2}{2}$. Assume $W^\rmL\neq\frac{\gamma^2}{2}$. Then the formula of $c_\gamma(\theta)$ in~\eqref{eq:c-theta} follows from the expression of $p_\gamma(\theta)$ as well as~\eqref{eq:p-c-theta-a}. If $\gamma^2=\frac{4}{3}$ and $W^\rmL=W^\rmR=\frac{\gamma^2}{2}$, then~\cite[Corollary 5.6]{GHS16} states that $c_\gamma(\theta)=2$, matching the expression in~\eqref{eq:c-theta}.
\end{proof}

 % \textcolor{blue}{For $\beta_1, \beta_3$ such that $\beta_1 + \beta_3 = Q + \frac{3\gamma}2$, we have 
 % \[H^{(\beta_1, \frac2\gamma, \beta_3)}_{(0,0,0)} = \frac{\Gamma(1-\frac{\gamma^2}4)^2 \sqrt{\sin(\pi\gamma^2/4)}}\pi \cdot \frac{\Gamma(\frac\gamma2(\beta_1-\gamma))\Gamma(1-\frac\gamma2(\beta_1 - \gamma))}{\Gamma(\frac\gamma2 (\beta_1 - \frac{3\gamma}2))\Gamma(1 - \frac\gamma2(\beta_1 - \frac\gamma2))}\]
 % which is indeed invariant when we replace $\beta_1$ with $\beta_3$. I will flesh out the details soon. 
 % }

 % \pu{By the formula above, assuming thick, using shift equations, for $\theta\in(-\frac{\pi}{2},\frac{\pi}{2})$,
 % $$\frac{p}{1-p} = \frac{\sin((\frac{\gamma}{2}\beta^L-\frac{\gamma^2}{2})\pi)}{\sin((\frac{\gamma}{2}\beta^R-\frac{\gamma^2}{2})\pi)} = \frac{\sin((\frac{1}{2}-\frac{\gamma^2}{8})(\pi-2\theta))}{\sin((\frac{1}{2}-\frac{\gamma^2}{8})(\pi+2\theta))}.$$
 % }

\bibliographystyle{alpha}
\bibliography{theta}

\newcommand{\etalchar}[1]{$^{#1}$}
\begin{thebibliography}{KMSW19}

\bibitem[ACSW21]{AS21}
Morris Ang, Gefei Cai, Xin Sun, and Baojun Wu.
\newblock Integrability of {C}onformal {L}oop {E}nsemble: {I}maginary {DOZZ}
  {F}ormula and {B}eyond.
\newblock {\em arXiv preprint arXiv:2107.01788}, 2021.

\bibitem[ADK22]{alon2021runsort}
Noga Alon, Colin Defant, and Noah Kravitz.
\newblock The runsort permuton.
\newblock {\em Adv. in Appl. Math.}, 139:Paper No. 102361, 18, 2022.

\bibitem[AG23]{AG23a}
Morris {Ang} and Ewain {Gwynne}.
\newblock {Cutting $\gamma$-Liouville quantum gravity by Schramm-Loewner
  evolution for $\kappa \not\in \{\gamma^2, 16/\gamma^2\}$}.
\newblock {\em arXiv e-prints}, page arXiv:2310.11455, October 2023.

\bibitem[AHS17]{AHS17}
Juhan Aru, Yichao Huang, and Xin Sun.
\newblock Two perspectives of the 2{D} unit area quantum sphere and their
  equivalence.
\newblock {\em Comm. Math. Phys.}, 356(1):261--283, 2017.

\bibitem[AHS23]{AHS20}
Morris Ang, Nina Holden, and Xin Sun.
\newblock Conformal welding of quantum disks.
\newblock {\em Electronic Journal of Probability}, 28:1--50, 2023.

\bibitem[AHS24]{AHS21}
Morris Ang, Nina Holden, and Xin Sun.
\newblock Integrability of {SLE} via conformal welding of random surfaces.
\newblock {\em Comm. Pure Appl. Math.}, 77(5):2651--2707, 2024.

\bibitem[AHSY25]{AHSY23}
Morris Ang, Nina Holden, Xin Sun, and Pu~Yu.
\newblock Conformal welding of quantum disks and multiple {SLE}: the non-simple
  case.
\newblock {\em Probability Theory and Related Fields}, pages 1--61, 2025.

\bibitem[Ang25]{ang-zipper}
Morris Ang.
\newblock {Liouville conformal field theory and the quantum zipper}.
\newblock {\em The Annals of Probability}, 2025.
\newblock To appear.

\bibitem[ARS25]{AGS21}
Morris Ang, Guillaume Remy, and Xin Sun.
\newblock F{ZZ} formula of boundary {L}iouville {CFT} via conformal welding.
\newblock {\em J. Eur. Math. Soc. (JEMS)}, 27(3):1209--1266, 2025.

\bibitem[ARSZ23]{arsz-structure-constants}
Morris Ang, Guillaume Remy, Xin Sun, and Tunan Zhu.
\newblock {Derivation of all structure constants for boundary Liouville CFT}.
\newblock {\em arXiv preprint arXiv:2305.18266}, 2023.

\bibitem[ASY25]{ASY22}
Morris Ang, Xin Sun, and Pu~Yu.
\newblock Quantum triangles and imaginary geometry flow lines.
\newblock {\em Annales de l'Institut Henri Poincar\`{e}, Probabilit\`{e}s et
  Statistiques, to appear}, 2025.

\bibitem[ASYZ25]{ASYZ24}
Morris {Ang}, Xin {Sun}, Pu~{Yu}, and Zijie {Zhuang}.
\newblock {Boundary touching probability and nested-path exponent for
  non-simple CLE}.
\newblock {\em Annals of Probability}, 53(3), 2025.

\bibitem[BBF{\etalchar{+}}20]{bassino2017universal}
Fr\'{e}d\'{e}rique Bassino, Mathilde Bouvel, Valentin F\'{e}ray, Lucas Gerin,
  Micka\"{e}l Maazoun, and Adeline Pierrot.
\newblock Universal limits of substitution-closed permutation classes.
\newblock {\em J. Eur. Math. Soc. (JEMS)}, 22(11):3565--3639, 2020.

\bibitem[BDS21]{borga2021almost}
Jacopo Borga, Enrica Duchi, and Erik Slivken.
\newblock Almost square permutations are typically square.
\newblock {\em Annales de l'Institut Henri Poincare, Probabilites et
  Statistiques}, 57(4):1834--1856, 2021.

\bibitem[Ber17]{berestycki-gmc}
Nathana{\"e}l Berestycki.
\newblock {An elementary approach to Gaussian multiplicative chaos}.
\newblock {\em Electronic Communications in Probability}, 22(none):1 -- 12,
  2017.

\bibitem[BHSY23]{BHSY23}
Jacopo Borga, Nina Holden, Xin Sun, and Pu~Yu.
\newblock Baxter permuton and {L}iouville quantum gravity.
\newblock {\em Probab. Theory Related Fields}, 186(3-4):1225--1273, 2023.

\bibitem[BM22]{BM20}
Jacopo Borga and Micka\"{e}l Maazoun.
\newblock Scaling and local limits of {B}axter permutations and bipolar
  orientations through coalescent-walk processes.
\newblock {\em Ann. Probab.}, 50(4):1359--1417, 2022.

\bibitem[Bor23]{borga2021skewperm}
Jacopo Borga.
\newblock The skew {B}rownian permuton: a new universality class for random
  constrained permutations.
\newblock {\em Proc. Lond. Math. Soc. (3)}, 126(6):1842--1883, 2023.

\bibitem[BPZ84]{BPZ84}
A.~A. Belavin, A.~M. Polyakov, and A.~B. Zamolodchikov.
\newblock Infinite conformal symmetry in two-dimensional quantum field theory.
\newblock {\em Nuclear Phys. B}, 241(2):333--380, 1984.

\bibitem[BS20]{MR4149526}
Jacopo Borga and Erik Slivken.
\newblock Square permutations are typically rectangular.
\newblock {\em Ann. Appl. Probab.}, 30(5):2196--2233, 2020.

\bibitem[Cer21]{cercle2021unit}
Baptiste Cercl\'{e}.
\newblock Unit boundary length quantum disk: a study of two different
  perspectives and their equivalence.
\newblock {\em ESAIM Probab. Stat.}, 25:433--459, 2021.

\bibitem[Dau22]{dauvergne2018archimedean}
Duncan Dauvergne.
\newblock The {A}rchimedean limit of random sorting networks.
\newblock {\em J. Amer. Math. Soc.}, 35(4):1215--1267, 2022.

\bibitem[DKRV16]{DKRV16}
Fran{\c{c}}ois David, Antti Kupiainen, R{\'e}mi Rhodes, and Vincent Vargas.
\newblock Liouville quantum gravity on the {R}iemann sphere.
\newblock {\em Communications in Mathematical Physics}, 342(3):869--907, 2016.

\bibitem[DMS21]{DMS14}
Bertrand Duplantier, Jason Miller, and Scott Sheffield.
\newblock Liouville quantum gravity as a mating of trees.
\newblock {\em Ast\'{e}risque}, 427, 2021.

\bibitem[DO94]{DO94}
H.~Dorn and H.-J. Otto.
\newblock Two- and three-point functions in {L}iouville theory.
\newblock {\em Nuclear Phys. B}, 429(2):375--388, 1994.

\bibitem[DS11]{DS11}
Bertrand Duplantier and Scott Sheffield.
\newblock Liouville quantum gravity and {KPZ}.
\newblock {\em Inventiones mathematicae}, 185(2):333--393, 2011.

\bibitem[GHS16]{GHS16}
E.~Gwynne, N.~Holden, and X.~Sun.
\newblock Joint scaling limit of a bipolar-oriented triangulation and its dual
  in the peanosphere sense.
\newblock {\em arXiv preprint arXiv:1603.01194}, 2016.

\bibitem[GHS23]{GHS19}
Ewain Gwynne, Nina Holden, and Xin Sun.
\newblock Mating of trees for random planar maps and {L}iouville quantum
  gravity: a survey.
\newblock In {\em Topics in statistical mechanics}, volume~59 of {\em Panor.
  Synth\`eses}, pages 41--120. Soc. Math. France, Paris, 2023.

\bibitem[GKRV20]{GKRV20_bootstrap}
Colin {Guillarmou}, Antti {Kupiainen}, R{\'e}mi {Rhodes}, and Vincent {Vargas}.
\newblock {Conformal bootstrap in Liouville Theory}.
\newblock {\em arXiv e-prints}, page arXiv:2005.11530, May 2020.

\bibitem[GKRV21]{GKRV21_Segal}
Colin {Guillarmou}, Antti {Kupiainen}, R{\'e}mi {Rhodes}, and Vincent {Vargas}.
\newblock {Segal's axioms and bootstrap for Liouville Theory}.
\newblock {\em arXiv e-prints}, page arXiv:2112.14859, December 2021.

\bibitem[HRV18]{hrv-disk}
Yichao Huang, R{\'e}mi Rhodes, and Vincent Vargas.
\newblock {Liouville quantum gravity on the unit disk}.
\newblock {\em Annales de l'Institut Henri Poincaré, Probabilités et
  Statistiques}, 54(3):1694 -- 1730, 2018.

\bibitem[Kal97]{Kal97}
O.~Kallenberg.
\newblock {\em Foundations of modern probability}.
\newblock Springer-Verlag New York, 1997.

\bibitem[KMSW19]{KMSW19}
Richard Kenyon, Jason Miller, Scott Sheffield, and David~B Wilson.
\newblock Bipolar orientations on planar maps and {SLE}$_{12}$.
\newblock {\em The Annals of Probability}, 47(3):1240--1269, 2019.

\bibitem[KRV20]{KRV20}
Antti Kupiainen, R{\'e}mi Rhodes, and Vincent Vargas.
\newblock {Integrability of Liouville theory: proof of the DOZZ Formula}.
\newblock {\em Annals of Mathematics}, 191(1):81--166, 2020.

\bibitem[Lej06]{Lej06}
Antoine Lejay.
\newblock On the constructions of the skew {B}rownian motion.
\newblock {\em Probability Surveys}, 3:413--466, 2006.

\bibitem[LG16]{LGbook}
Jean-Fran\c{c}ois Le~Gall.
\newblock {\em Brownian motion, martingales, and stochastic calculus}, volume
  274 of {\em Graduate Texts in Mathematics}.
\newblock Springer, [Cham], french edition, 2016.

\bibitem[LSW24]{LSW17}
Yiting Li, Xin Sun, and Samuel~S. Watson.
\newblock Schnyder woods, {${\rm SLE}_{16}$}, and {L}iouville quantum gravity.
\newblock {\em Trans. Amer. Math. Soc.}, 377(4):2439--2493, 2024.

\bibitem[LSYZ24]{LSYZ24}
Haoyu {Liu}, Xin {Sun}, Pu~{Yu}, and Zijie {Zhuang}.
\newblock The bulk one-arm exponent for the {CLE}$_{\kappa'}$ percolations.
\newblock {\em arXiv preprint arXiv:2410.12724}, 2024.

\bibitem[MS16]{MS16a}
J.~Miller and S.~Sheffield.
\newblock Imaginary {G}eometry {I}: Interacting {SLE}s.
\newblock {\em Probability Theory and Related Fields}, 164(3-4):553--705, 2016.

\bibitem[MS17]{MS17}
Jason Miller and Scott Sheffield.
\newblock {Imaginary geometry IV: interior rays, whole-plane reversibility, and
  space-filling trees}.
\newblock {\em Probability Theory and Related Fields}, 169(3):729--869, 2017.

\bibitem[NQSZ23]{nolin2023backbone}
Pierre Nolin, Wei Qian, Xin Sun, and Zijie Zhuang.
\newblock Backbone exponent for two-dimensional percolation.
\newblock {\em arXiv preprint arXiv:2309.05050}, 2023.

\bibitem[PT02]{PT_boundary_3pt}
B.~Ponsot and J.~Teschner.
\newblock Boundary {L}iouville field theory: boundary three-point function.
\newblock {\em Nuclear Physics B}, 622(1):309--327, 2002.

\bibitem[Rom06]{MR2266895}
Dan Romik.
\newblock Permutations with short monotone subsequences.
\newblock {\em Adv. in Appl. Math.}, 37(4):501--510, 2006.

\bibitem[RZ20]{RZ20a}
Guillaume Remy and Tunan Zhu.
\newblock {The distribution of Gaussian multiplicative chaos on the unit
  interval}.
\newblock {\em The Annals of Probability}, 48(2):872--915, 2020.

\bibitem[RZ22]{RZ20b}
Guillaume Remy and Tunan Zhu.
\newblock Integrability of boundary {L}iouville conformal field theory.
\newblock {\em Comm. Math. Phys.}, 395(1):179--268, 2022.

\bibitem[She16]{She16a}
Scott Sheffield.
\newblock Conformal weldings of random surfaces: {SLE} and the quantum gravity
  zipper.
\newblock {\em The Annals of Probability}, 44(5):3474--3545, 2016.

\bibitem[Sta09]{starr2009thermodynamic}
Shannon Starr.
\newblock Thermodynamic limit for the {M}allows model on {$S_n$}.
\newblock {\em J. Math. Phys.}, 50(9):095208, 15, 2009.

\bibitem[SW16]{SW16}
Scott Sheffield and Menglu Wang.
\newblock {Field-measure correspondence in Liouville quantum gravity almost
  surely commutes with all conformal maps simultaneously}.
\newblock {\em arXiv preprint arXiv:1605.06171}, 2016.

\bibitem[SY24]{SY23}
Xin Sun and Pu~Yu.
\newblock S{LE} partition functions via conformal welding of random surfaces.
\newblock {\em Int. Math. Res. Not. IMRN}, (24):14763--14801, 2024.

\bibitem[ZZ96]{ZZ96}
Alexander~B. Zamolodchikov and Alexei~B. Zamolodchikov.
\newblock {Structure constants and conformal bootstrap in Liouville field
  theory}.
\newblock {\em Nucl. Phys. B}, 477:577--605, 1996.

\end{thebibliography}

\end{document}